\documentclass[11pt, reqno]{amsart}
\usepackage[margin=3cm]{geometry} 
\usepackage{graphicx}
\graphicspath{{STLCT_Images/}{./}} % Specifies where to look for included images (trailing slash required)
\usepackage{float}
\usepackage[colorlinks=true, allcolors=blue]{hyperref}
\usepackage{amsmath,amssymb,amsfonts}%
\usepackage{amsthm}%
\usepackage{mathrsfs}%
\usepackage[title]{appendix}%
\usepackage{xcolor}%
\usepackage{textcomp}%
\usepackage{manyfoot}%
\usepackage{booktabs}%
\usepackage[ruled,vlined]{algorithm2e}
\usepackage{listings}%
\usepackage{soul}
\usepackage{blindtext}
\usepackage{enumerate}%
\allowdisplaybreaks
\usepackage{subcaption}  % 

\newtheorem{theorem}{Theorem}[section]

\newtheorem{lemma}[theorem]{Lemma}%  
\newtheorem{remark}[theorem]{Remark}%
\newtheorem{corollary}[theorem]{Corollary}%
\newtheorem{proposition}[theorem]{Proposition}%

\newcommand{\N}{\mathbb{N}}
\newcommand{\Z}{\mathbb{Z}}
\newcommand{\R}{\mathbb{R}}
\newcommand{\C}{\mathbb{C}}

\newcommand{\T}{\mathbb{T}}

\numberwithin{equation}{section}
\setstcolor{red}

\begin{document}
\title[Stable STLCT phase retrieval in Gaussian shift-invariant spaces]{Stable phase retrieval from short-time linear canonical transforms of signals in Gaussian shift-invariant spaces}
\author{Cheng Cheng, Baixiang Wu, Jun Xian}
\thanks{This project is partially supported by  National Key RD Program of China (No. 2024YFA1013703),  National Natural Science Foundation of China (12171490, 12371104), China Scholarship Council (202306380270, 202506380227), and the Guangdong Province Key Laboratory of Computational Science, China.}

\maketitle

\begin{abstract}
Gabor phase retrieval for signals has attracted considerable attention in recent years. For the more general short-time linear canonical transform (STLCT), which arises naturally in optical systems and canonical time--frequency analysis, existing work has so far focused mainly on uniqueness and sampling conditions. Explicit reconstruction formulas, quantitative stability estimates, and robust reconstruction algorithms, however,  are still missing. In this paper, we study uniqueness, stability, and robust reconstruction for phase retrieval from phaseless STLCT measurements in the complex Gaussian shift-invariant space $V_\beta^\infty(\varphi)$.  We first prove that every signal in $V_\beta^\infty(\varphi)$ is uniquely determined, up to a global unimodular constant, by its phaseless STLCT measurements on the semi-discrete set $\frac{\beta}{2}\mathbb Z\times\mathbb R$, and we derive an explicit reconstruction formula. We then establish stability on intervals under an anchor-point condition, showing that the stability constant is governed by the maximal spacing between adjacent anchor points rather than by the radius of the whole interval. This prevents exponential deterioration with respect to the interval size. Motivated by the practical setting in which only finitely many discrete noisy magnitude samples are available, we further develop an explicit reconstruction algorithm with quantitative robustness guarantees, where the reconstruction error is controlled by the discretization parameters, the noise level, and the conditioning induced by the anchor points. In the Fourier case, our results recover the corresponding Gabor phase retrieval results of Grohs and Liehr and provide improved stability constants.
\end{abstract}

\section{Introduction}
		
\medskip 

Phase retrieval concerns the reconstruction of a function $f$ from magnitude-only measurements $\{\, |\psi(f)| : \psi \in \Psi \,\}$, where $\Psi$ is a family of linear functionals acting on a function space. Such problems are inherently nonlinear and arise naturally in many scientific and engineering applications in which detectors record only intensities rather than the underlying signal, including X-ray crystallography \cite{millane1990phase}, coherent diffractive imaging \cite{bunk2007diffractive}, and optics \cite{walther1963question}. From a mathematical point of view, phase retrieval is a nonlinear inverse problem in which one seeks to recover a function from quadratic measurements. Its central questions concern injectivity of the associated nonlinear measurement map, stability of the inverse problem, and the design of constructive reconstruction procedures, see \cite{fannjiang2020numerics,grohs2020phase} for surveys, as well as \cite{alaifari2017phase,alaifari2021gabor,alaifari2021uniqueness,balan2006signal,cahill2016phase,chen2022phase,cheng2019phaseless,grochenig2020phase,grohs2024stable,grohs2019stable,thakur2011reconstruction} and  references therein.

A classical example is Fourier phase retrieval, corresponding to the choice of $\Psi$ as the Fourier transform. In this setting, the modulus of the Fourier transform does not determine the signal uniquely. Besides the trivial ambiguities of translation, conjugate reflection, and multiplication by unimodular constants, there exist infinitely many essentially different functions sharing the same Fourier magnitude \cite{akutowicz1956determination,beinert2015ambiguities,hurt1989phase,walther1963question}. This shows that taking the modulus alone is, in general, insufficient for recovering the phase information. It therefore motivates the search for alternative measurement schemes with enough redundancy to guarantee uniqueness and stable recovery.

A prominent alternative is provided by the short-time Fourier transform (STFT) 
$$\mathcal{V}_\omega f(x,\xi)= \int_{\R} f(t)\,\overline{\omega(t-x)}\,e^{-2\pi i \xi t}\,dt,$$
which yields a highly redundant time--frequency representation and has attracted considerable attention in phase retrieval  \cite{alaifari2024connection,alaifari2021uniqueness,alaifari2022phase,bartusel2023injectivity,chen2025sampling,grohs2020phase,grohs2022foundational,grohs2023nonuniqueness,grohs2025completeness,grohs2019stable,wellersho?2024injectivity,wellersho?2024phase,wellersho?2025dense}. When the window function $\omega$ is chosen to be the Gaussian
$$\varphi:=e^{-\frac{\cdot^2}{2\sigma^2}},$$
with variance $\sigma^2$, the transform $\mathcal{V}_\varphi f$ enjoys strong analytic properties and is commonly referred to as the Gabor transform \cite{grochenig2001foundations}. The corresponding phase retrieval problem is known as \emph{Gabor phase retrieval}. Under mild nondegeneracy assumptions on the window, the spectrogram $|\mathcal{V}_\omega f|^2$ determines the signal uniquely up to a unimodular constant \cite{alaifari2021uniqueness,bartusel2023injectivity,grohs2020phase,grohs2019stable}. However, in infinite-dimensional spaces, uniqueness does not in general imply uniform stability: the nonlinear map $f\mapsto |\mathcal{V}_\omega f|$ typically fails to admit a uniform Lipschitz inverse \cite{alaifari2017phase,cahill2016phase}. This has led to the study of conditional stability under additional structural assumptions \cite{alaifari2021gabor,grohs2019stable,grohs2022stable}. 

The shift-invariant space 
$$ V_\beta^p(h) = \Bigl\{ \sum_{k\in\mathbb Z} c_k\,h(\cdot-\beta k):\ \{c_k\}_{k\in\mathbb Z}\in \ell^p(\mathbb Z) \Bigr\},  \ 1\le p\le \infty, $$
is a fundamental class of function spaces in harmonic analysis and sampling theory \cite{aldroubi2001nonuniform,aldroubi2001pframes,aldroubi2005convolution,deboor1994structure,grochenig2018sampling}. They interpolate between finite-dimensional approximation spaces and classical spaces such as Paley--Wiener and spline spaces, and provide a natural framework for sampling and reconstruction problems. For suitable generators, and in particular for the Gaussian, elements of these spaces admit strong analytic structure, which makes them especially amenable to complex-analytic methods.  Phase retrieval in shift-invariant spaces has been extensively studied for real-valued signals via phaseless pointwise sampling \cite{chen2020phase,cheng2021stable,cheng2019phaseless,cheng2021stable2,grochenig2020phase,li2025global,romero2021sign,sun2021local,thakur2011reconstruction}, and for complex-valued signals via structured measurements designed  \cite{chen2024conjugate,cheng2025conjugate,lai2021conjugate,mc2004phase}.

Some particularly relevant recent results are due to Grohs and Liehr \cite{grohs2023injectivity,grohs2024stable}, who studied Gabor phase retrieval for signals in the complex Gaussian shift-invariant space $V_\beta^\infty(\varphi)$. In \cite{grohs2023injectivity}, they proved that every signal $f\in V_\beta^\infty(\varphi)$ is uniquely determined, up to a global unimodular constant,  from the phaseless Gabor lattice measurements $|\mathcal{V}_{\varphi} f(\frac{\beta}{2+\epsilon}\Z, \Z)|$ for any $\epsilon > 0$. 
They later showed in \cite{grohs2024stable} that phaseless Gabor measurements on parallel lines in the time--frequency plane determine the signal uniquely, up to a global unimodular constant, established conditional stability estimates on intervals, and developed an explicit reconstruction procedure from finitely many noisy samples. These results demonstrate that the combination of Gaussian shift-invariant structure and redundant time--frequency measurements leads not only to uniqueness, but also to stable and constructive recovery.

These results naturally raise the question whether the same program can be carried out beyond the Fourier framework. This motivates the study of the short-time linear canonical transform (STLCT), which extends the STFT by replacing the Fourier kernel with the kernel of the linear canonical transform (LCT). The LCT is parametrized by
$$ \mathbf A=
\begin{pmatrix}
a & b\\
c & d
\end{pmatrix}, \ a,b,c,d\in \R,  ad-bc=1,$$
and includes the Fourier transform, the fractional Fourier transform, and the Fresnel transform as special cases \cite{healy2016linear,xu2013linear}. It plays an important role in optics \cite{healy2016linear,stern2008uncertainty} and quantum mechanics \cite{moshinsky1971linear}. For a window function $\omega$, the STLCT is defined by
$$\mathcal S_\omega^{\mathbf A}f(x,\xi) = \int_{\mathbb R} f(t)\overline{\omega(t-x)}\,K_{\mathbf A}(\xi,t)\,dt, $$
where
$$ K_{\mathbf A}(\xi,t) =\frac{1}{\sqrt{ib}}\exp\!\left(\frac{i\pi}{b}(at^2-2\xi t+d\xi^2)\right)$$
is the LCT kernel \cite{kou2012windowed,xu2013linear}.  Existing results on STLCT phase retrieval mainly address uniqueness and sampling conditions \cite{dong2025uniqueness,li2024uniqueness,li2025uniqueness,liu2026uniqueness}. By contrast, explicit reconstruction formulas, quantitative stability estimates, and robust algorithms for infinite-dimensional signal classes remain largely missing. This gap is particularly significant from the practical point of view, since in applications one only has access to finitely many discrete phaseless samples, typically corrupted by noise, and therefore any useful phase retrieval theory must ultimately lead to a robust reconstruction algorithm.

The purpose of this paper is to develop such a theory for Gaussian shift-invariant signals from phaseless STLCT measurements. Using a chirp-modulated Gaussian window, we show that the analytic framework underlying the Gabor case extends to the STLCT setting. First, we prove that every signal in $V_\beta^\infty(\varphi)$ is uniquely determined, up to a global unimodular constant, by its phaseless STLCT measurements on the semi-discrete set $\frac{\beta}{2}\mathbb Z\times\mathbb R$, and we derive an explicit reconstruction formula, see Theorem \ref{pr.gassian.thm1}. Next, we establish conditional stability on bounded intervals. By introducing anchor points at which the signal magnitude is bounded away from zero, we show that the stability constant depends on the maximal gap between adjacent anchor points rather than on the radius of the whole interval, thereby avoiding exponential deterioration with respect to the interval size, see Theorem \ref{stable.thm2}.  Finally, motivated by the practical setting in which only finitely many discrete noisy phaseless STLCT samples are available, we develop an explicit reconstruction algorithm and establish quantitative robustness estimates, see Theorem \ref{alg-stability.thm}. The algorithm first detects anchor points from the sampled magnitude data, then performs local reconstruction on neighboring anchor intervals, and finally propagates the phase information across adjacent anchors to recover the signal globally. Our robustness analysis makes precise how the reconstruction error depends on the discretization parameters, the measurement noise, and the conditioning induced by the anchor point condition. Numerical experiments illustrate the effectiveness of the algorithm. In the Fourier case, our results recover the corresponding Gabor phase retrieval theory of Grohs and Liehr \cite{grohs2024stable}, while also yielding sharper stability bounds.

The remainder of the paper is organized as follows. In Section \ref{prelim.sec}, we collect the necessary preliminaries on the STLCT and Gaussian shift-invariant spaces. In Section \ref{uniq.sec}, we establish the uniqueness result of STLCT phase retrieval for complex-valued signals in Gaussian shift-invariant spaces. Section \ref{stable.sec} is devoted to the corresponding stability analysis of STLCT phase retrieval. Finally, in Section \ref{robust.sec}, we develop the reconstruction algorithm from finitely many discrete phaseless STLCT samples, derive quantitative approximation guarantees, and present numerical experiments illustrating the effectiveness of the proposed method.

    \section{Preliminaries}\label{prelim.sec}

\medskip

In this section, we collect the notation, definitions, and preliminary results that will be used throughout the paper.

\subsection{Time-frequency transforms}

For $f \in L^1 (\R)$, its \emph{Fourier transform} $\mathcal{F} f $ is defined by
	\begin{align*}
		\mathcal{F} f (\xi)  := \int_{\R} f(t) e^{- 2 \pi i \xi t} dt.
	\end{align*}
	For $f \in L^2 (\R)$, its \emph{short-time Fourier transform} (STFT) $\mathcal{V}_\omega$ with a window function $\omega \in L^2 (\R)$ is defined by
	\begin{align*}
		\mathcal{V}_\omega f (x , \xi) := \int_{\R} f(t) \overline{\omega(t - x)} e^{- 2 \pi i \xi t} dt.
	\end{align*}
	Given a parameter matrix $\mathbf{A} = 
	\begin{pmatrix}
		a & b \\
		c & d
	\end{pmatrix}
	$
	such that $a, b, c, d \in \R$ and $ad - bc = 1$. We define the \emph{linear canonical transform} (LCT) \cite{healy2016linear, xu2013linear} $\mathcal{F}_{\textbf{A}}$ of $f \in L^1(\R)$ with parameter matrix \textbf{A} by
	\begin{align*}
		\mathcal{F}_{\textbf{A}} f (\xi) := \left\{
		\begin{array}{ll}
			\int_{\R} f(t) K_{\textbf{A}} (\xi , t) dt, \quad &  b \ne 0; \\
			\sqrt{d} e^{i \pi cd  \xi^2} f(d\xi), \quad & b = 0,
		\end{array}
		\right.
	\end{align*}
	where the transform kernel $K_{\textbf{A}}$ is given by
	\begin{align}\label{eq12}
		K_{\textbf{A}} (\xi , t) := \frac{1}{\sqrt{i b}} e^{\frac{i \pi}{b} (a t^2 - 2\xi t + d\xi^2)}, \ b \ne 0.
	\end{align}

	When $b = 0$, the LCT degenerates into a scaling and chirp multiplication operator. Excluding this degenerate  case,  we assume $b > 0$ throughout this paper, since the case $b<0$ can be handled analogously.

    The {\em short-time linear canonical transform} (STLCT) of $f \in L^2(\R)$ with parameter matrix $\mathbf{A}$ and window function $\omega \in L^2(\R)$ is defined as
\begin{align}\label{stlct.def}
    \mathcal{S}_\omega^{\mathbf{A}} f(x, \xi) := \int_{\R} f(t) \overline{\omega(t - x)} K_{\mathbf{A}}(\xi, t) \, dt,
\end{align}
where $K_{\mathbf{A}}$ is as in \eqref{eq12} \cite{kou2012windowed, xu2013linear}. When $\mathbf{A} = \begin{pmatrix} 0 & 1 \\ -1 & 0 \end{pmatrix}$, the STLCT reduces to the classical STFT.

    To establish the relationship between STLCT and STFT, we introduce the following auxiliary functions. 
    Given the parameter matrix $\mathbf{A}$ and two complex-valued functions $f, \omega:\R \to \C$  along with two constants $x\in \R$ and $\xi\in \R$,  we define the following functions 
    \begin{subequations}\label{def.auxi}
\begin{align}
&\check{f}(t)= e^{i\pi \frac{a}{b} t^2}f(t), \label{def.auxia}\\
& f^{\omega,x}(t)= f(t)\overline{\omega(t-x)} e^{i\pi \frac{a}{b} t^2},\label{def.auxib} \\
&f_\xi(t)=f(t-\xi)\overline{f(t)}\label{def.auxic},
    \end{align}
\end{subequations}       
    where $t\in \R$. We refer to $\check{f}$ as the \emph{chirp modulation} of $f$, and $f_\xi$ as the \emph{tensor product} of $f$.
    These auxiliary functions allow us to express the STLCT in terms of the STFT:
    \begin{align}\label{eq59}
        \mathcal{S}_\omega^{\mathbf{A}}f (x , \xi) = \frac{1}{\sqrt{i b}} e^{i\pi \frac{d}{b} \xi^2} \mathcal{V}_\omega\check{f} (x , \frac{\xi}{b})  = \frac{1}{\sqrt{i b}} e^{i\pi \frac{d}{b} \xi^2} \mathcal{F} f^{\omega,x} (\frac{\xi}{b}).
    \end{align}
    The tensor product $f_\xi$ in \eqref{def.auxic} plays a crucial role in our analysis, as it captures the phase information of $f$ through the phaseless STLCT measurements, see  Section~\ref{uniq.sec}.

\subsection{Gaussian shift-invariant spaces}
    We now introduce the shift-invariant spaces that will be the signal class of interest. Let $\varphi:= e^{-\frac{\cdot^2}{2\sigma^2}}$ be a Gaussian function with variance $\sigma^2$. The {\em Gaussian shift-invariant space} is defined as
\begin{align}\label{gauss.sis.def}
    V_{\beta}^p(\varphi) := \Big\{ \sum_{k \in \Z} c_k \varphi(\cdot - \beta k) : \{c_k\}_{k\in \Z} \in \ell^p(\Z) \Big\}, \quad 1 \le p \le \infty,
\end{align}
where $\beta > 0$ is the step-size.
It is well known that $ V_{\beta}^p(\varphi) \subset V_{\beta}^q(\varphi) \subset L^q(\R)$ for $1 \le p \le q \le \infty$ \cite{aldroubi2001nonuniform}.
    Define the $\frac{1}{\beta}$-periodic map $\Psi_{\beta} : \R \to \R$ via
    \begin{align}\label{psi.beta.eq}
        \Psi_{\beta} (t) {}& = \sum_{n \in \Z} |\mathcal{F} \varphi (t + \frac{n}{\beta}) |^2.
    \end{align}
    One may easily verify that  $\{\varphi(\cdot - \beta n): n\in \Z \}$ forms a Riesz basis for $V_{\beta}^2(\varphi)$, i.e., there exist constants $0<A_2\le B_2<\infty$ such that
    \begin{align}\label{eq90}
        A_2\|c\|_{\ell^2(\Z)} \le \left\| \sum_{n\in \Z} c_n \varphi(\cdot - \beta n) \right\|_{L^2(\R)} \le B_2\|c\|_{\ell^2(\Z)},
    \end{align}
    cf. Proposition \ref{prop3}, where we only need verify that $\beta A_2 \le \Psi_{\beta}(t) \le \beta B_2$ holds for almost every $t\in [0,\frac{1}{\beta}]$.
    
    %Furthermore, for $c\in \ell^p(\Z)$ with some $1\le p\le \infty$, there exist $0<A_p\le B_p< \infty$ such that
    %\begin{align}\label{eq90}
    %    A_p\|c\|_{\ell^p(\Z)} \le \left\| \sum_{n\in \Z} c_n \varphi(\cdot - \beta n) \right\|_{L^p(\R)} \le B_p\|c\|_{\ell^p(\Z)}
    %\end{align}
    %\cite[Theorem 2.4]{aldroubi2001nonuniform}, and we have the inclusion $V_{\beta}^p(\varphi) \subseteq V_{\beta}^q(\varphi) \subseteq L^q(\R), \text{ for any } 1\le p < q \le \infty$ \cite[Corollary 2.5]{aldroubi2001nonuniform}.

%\begin{lemma}\cite[Theorem 9.2.5]{christensen2003introduction}
  %  Let $\phi\in L^2(\R)$ and $\beta>0$. Assume that there exist constants $0<A\le B< \infty$ such that
   % \begin{align}
   %     \beta A \le \Psi_{\beta}(t) \le \beta B
    %\end{align}
    %for almost every $t\in [0,\frac{1}{\beta}]$. Then $V_{\beta}^2(\phi)$ is a closed subspace of $L^2(\R)$ and $\{\phi(\cdot - \beta n): n\in \Z \}$ is a Riesz basis for $V_{\beta}^2(\phi)$.
%\end{lemma}
      
    As \eqref{eq90} holds,
      there exists a unique dual generator $\widetilde{\varphi} \in V_{\beta}^2(\varphi)$ such that $\{\widetilde{\varphi}(\cdot - \beta n): n\in \Z \}$ is a Riesz basis for $V_{\beta}^2(\varphi)$, and  $\varphi$ and $\widetilde{\varphi}$ satisfy the biorthogonality relation
    \begin{align*}
        \langle \widetilde{\varphi}(\cdot-\beta n), \varphi(\cdot - \beta k) \rangle = \delta_{n, k},
    \end{align*}
    where $\delta_{n,k} = 1$ for $n=k $ and $\delta_{n,k} = 0$ for $n\ne k$. For $x\in\mathbb R$, we denote by $T_x$ the translation operator
$$T_x f(t):=f(t-x), \ \ t\in\mathbb R.$$ 
Moreover, $f\in V_{\beta}^2 (\varphi)$ admits the following biorthogonal expansion 
    \begin{align*}
        f = \sum_{n\in \Z} \langle f,T_{\beta n} \varphi \rangle T_{\beta n} \widetilde{\varphi},
    \end{align*}
    which holds in $L^2(\R)$.
    The following proposition says that  $f\in V^\infty_\beta(\varphi) \subset L^\infty(\R)$ can still be represented in a biorthogonal expansion and the series converges uniformly on compact intervals of the real line. 

    \begin{proposition}\cite[Proposition 2.5]{grohs2024stable}\label{prop.bio.expa}  Let $\varphi=e^{-\frac{\cdot^2}{2\sigma^2}}$ be the Gaussian function  and $V_{\beta}^{\infty}(\varphi)$ be the shift-invariant space with step-size $\beta>0$ in \eqref{gauss.sis.def}.  
If $f \in V^\infty_\beta(\varphi)$,  then 
\begin{align}\label{bio.expa.inf}
    f(t) = \sum_{n\in \Z} \langle f, T_{\beta n}\varphi \rangle T_{\beta n}\widetilde{\varphi}(t), \ \ t\in \R,
\end{align}
where the series on the right converges uniformly on compact intervals of the real line.
\end{proposition}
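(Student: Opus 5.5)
The plan is to reduce the claim to the $\ell^2$ biorthogonal expansion by exploiting the exponential decay of the Gaussian generator and of its dual. Write $f=\sum_{k} c_k T_{\beta k}\varphi$ with $c\in\ell^\infty(\Z)$.

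\emph{Step 1: decay of the dual generator.} We know $\widetilde{\varphi}$ is given on the Fourier side by $\mathcal F\widetilde{\varphi}=\beta\,\mathcal F\varphi/\Psi_\beta$. Hence $\widetilde{\varphi}=\sum_n d_n T_{\beta n}\varphi$, where $d_n$ are the Fourier coefficients of the $\tfrac1\beta$-periodic function $\beta/\Psi_\beta$. Since $\Psi_\beta$ extends to an entire function that is bounded below by $\beta A_2>0$ on the real axis, $1/\Psi_\beta$ is analytic in a strip. It follows that $|d_n|\le Ce^{-\gamma|n|}$ for some $\gamma>0$. Consequently $|\widetilde{\varphi}(t)|\le C'e^{-\gamma'|t|}$, because a Gaussian convolved with an exponentially decaying sequence still decays exponentially.

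\emph{Step 2: a biorthogonality identity for $\ell^\infty$ coefficients.} The inner products $\langle f,T_{\beta n}\varphi\rangle=\sum_k c_k\langle T_{\beta k}\varphi,T_{\beta n}\varphi\rangle$ are well defined, since the Gram entries $g_{k-n}=\langle \varphi,T_{\beta(n-k)}\varphi\rangle$ decay like $e^{-\beta^2(n-k)^2/(4\sigma^2)}$. Thus $\langle f,T_{\beta n}\varphi\rangle=(Gc)_n$ with $G$ a Laurent operator having Gaussian-decaying symbol. The dual sequence $d$ is exactly the convolution inverse of $g$: this is the biorthogonality $\langle\widetilde\varphi,T_{\beta k}\varphi\rangle=\delta_{0,k}$, i.e.\ $d*g=\delta$. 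Both $d$ and $g$ lie in $\ell^1$, so associativity of convolution holds on $\ell^\infty$, giving
\[
\sum_n \langle f,T_{\beta n}\varphi\rangle\,T_{\beta n}\widetilde\varphi=\sum_n\sum_m (g*c)_n d_m T_{\beta(n+m)}\varphi=\sum_j (d*g*c)_j T_{\beta j}\varphi=\sum_j c_jT_{\beta j}\varphi=f .
\]

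\emph{Step 3: convergence and justification.} The $n$-th term of the series is bounded by $\|g\|_{\ell^1}\|c\|_\infty\,C'e^{-\gamma'|t-\beta n|}$. On a compact interval $|t|\le R$ this is at most $C''e^{-\gamma'(\beta|n|-R)}$, which is summable in $n$. The Weierstrass M-test then gives uniform convergence on compact sets. The same absolute bounds allow interchanging the double sums in Step 2 pointwise, via Fubini for series, using $\sum_{j}|\varphi(t-\beta j)|<\infty$ and $d,g\in\ell^1$.

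The main obstacle is Step 1, namely establishing exponential decay of $d$ (equivalently of $\widetilde\varphi$). The approach is to show that $\Psi_\beta$ is a real-analytic, strictly positive trigonometric-type function. Its Fourier coefficients are $\beta g_n$ by Poisson summation, and these decay like a Gaussian, so $\Psi_\beta$ extends holomorphically to all of $\C$. Positivity is preserved in a thin strip by continuity and periodicity, so $1/\Psi_\beta$ is analytic in that strip. A Paley--Wiener-type estimate (shifting the contour of the coefficient integral) then yields $|d_n|\le Ce^{-\gamma|n|}$.
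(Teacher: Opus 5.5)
Your argument is correct and complete. The paper itself gives no proof of this proposition; it quotes it from \cite[Proposition 2.5]{grohs2024stable}. So there is no in-paper route to compare with, and your proposal supplies the missing argument.

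Its structure is worth recording. You never approximate $f$ by elements of $V_\beta^2(\varphi)$, and you never use the $L^2$ expansion itself. The only input from the $L^2$ theory is the Fourier-side formula for $\widetilde\varphi$ in Lemma \ref{lem3}. Everything else happens at the level of coefficient sequences: $\langle f,T_{\beta n}\varphi\rangle=(g*c)_n$ and $\widetilde\varphi=\sum_m d_mT_{\beta m}\varphi$ with $d*g=\delta$. The expansion then collapses to the associativity $d*(g*c)=(d*g)*c=c$, which is valid because $d,g\in\ell^1$ and $c\in\ell^\infty$. As a by-product, the coefficient sequence of $f$ is unique: $c=d*\bigl(\langle f,T_{\beta n}\varphi\rangle\bigr)_n$.

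Your Step 1 is the step-$\beta$ analogue of the decay bound \eqref{eq6}, which the paper imports from \cite[Lemma 3.11]{grohs2024stable}. The contour shift you sketch works directly from the theta-function form of $\Psi_\beta$ computed in the proof of Proposition \ref{prop3}. One small wording fix: off the real axis, what persists in a thin strip is non-vanishing of $\Psi_\beta$, not positivity.

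On economy, exponential decay of $d$ is more than you need. The symbol $\sum_j g_j e^{2\pi i\beta j t}=\Psi_\beta(t)/\beta$ has absolutely summable coefficients and does not vanish on $\R$. Wiener's $1/f$ lemma therefore already gives $d\in\ell^1$. That in turn gives $\sum_n\sup_{|t|\le R}|\widetilde\varphi(t-\beta n)|<\infty$ for every $R>0$, which is all your M-test and Fubini interchanges actually use. So the step you flag as the main obstacle can be bypassed.

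Phrased this way, the proof carries over with only notational changes to any continuous generator $h$ with $\sum_n\sup_{t\in[0,\beta]}|h(t+\beta n)|<\infty$ and non-vanishing $\Psi_\beta$. That includes $V^\infty_{\beta/2}(\varphi_\xi)$, which is where the paper actually invokes the proposition in the proof of Theorem \ref{pr.gassian.thm1}. The stated proposition, by contrast, covers only the generator $\varphi$ with step $\beta$.
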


    \subsection{Riesz basis and dual generator}

    A key fact for our analysis is that the tensor product $\varphi_\xi$ in \eqref{def.auxic}  of a Gaussian also generates a shift-invariant space.
    Specifically, $\{\varphi_\xi(\cdot - \frac{\beta}{2} n): n\in \mathbb Z\}$ forms a Riesz basis for $V_{\frac{\beta}{2}}^{2}(\varphi_\xi)$, cf. Proposition \ref{prop3}. We now characterize the Riesz basis  and derive an explicit expression for the dual generator $\widetilde{\varphi_\xi}$, which will be used in the reconstruction formula. We first recall the Jacobi theta function.

    Recall that the Jacobi theta function of third kind $\vartheta_3:\C\times (0,1) \to \C$ is defined by
	\begin{align}\label{eq.theta}
		\vartheta_3(z,c):= \sum_{n\in \Z} c^{n^2} e^{2niz}.
	\end{align}
	For every fixed $c\in (0,1)$, $\vartheta_3$ is periodic in $z$ with period $\pi$. Fix $c\in (0,1)$, it is well known that $\vartheta_3(z,c)$ is minimal at point $z= \frac{\pi}{2}$ and maximal at point $z= 0$ (see \cite[p.~178]{janssen1996some}).

 To give a characterization of Riesz basis and the dual generator, we first recall the definition of $\varphi_\xi$ in \eqref{def.auxic}.  The following characterization has been established  in \cite{grohs2024stable}. For the sake of completeness, we include the proof in Appendix~\ref{proof.prop3}.

    \begin{proposition}\label{prop3}
      Let $\varphi=e^{-\frac{\cdot^2}{2\sigma^2}}$ be the Gaussian function  and $V_{\beta}^{\infty}(\varphi)$ be the shift-invariant space with step-size $\beta>0$ in \eqref{gauss.sis.def}. Then  $(\varphi(\cdot - \beta n))_{n\in \Z}$ forms a Riesz basis for $V_{\beta}^{2}(\varphi)$. 
Moreover, for every $\xi\in \R$, $(\varphi_\xi(\cdot - \frac{\beta}{2} n))_{n\in \Z}$ forms a Riesz basis for $V_{\beta/2}^{2}(\varphi_\xi)$. The dual function of $\varphi_\xi$ is  given by
\begin{equation}\label{dual.phi_w}
\widetilde{\varphi_\xi} = \sqrt{2} e^{\frac{\xi^2}{4\sigma^2}} T_{\frac{\xi}{2}} \mathcal{F}^{-1} \Lambda,
\end{equation} 
where
        \begin{align}\label{lambda.eq}
		\Lambda(t) = \frac{e^{-\pi^2 \sigma^2 t^2}}{\vartheta_3(\frac{\beta}{2}\pi t, e^{-\frac{\beta^2}{8\sigma^2}})}.
	\end{align}
    \end{proposition}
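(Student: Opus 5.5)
The plan is to reduce everything to the classical Fourier-side criterion for shift-invariant systems. For a generator $h\in L^2(\R)$ and step $\gamma>0$, the system $\{h(\cdot-\gamma n)\}_{n\in\Z}$ is a Riesz basis of $V^2_\gamma(h)$ if and only if the periodization
$$\Psi_{h,\gamma}(t)=\sum_{n\in\Z}|\mathcal F h(t+\tfrac{n}{\gamma})|^2$$
is bounded above and below by positive constants for a.e. $t$. In that case the dual generator is given by
$$\mathcal F\widetilde h=\frac{\gamma\,\mathcal F h}{\Psi_{h,\gamma}}.$$
This follows from the standard correspondence: $\mathcal F\bigl(\sum_n c_n h(\cdot-\gamma n)\bigr)=C(\cdot)\,\mathcal Fh$ with $C$ the $\frac1\gamma$-periodic symbol of $c$, combined with Plancherel on $[0,\frac1\gamma]$. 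Biorthogonality then amounts to $\gamma\int_0^{1/\gamma}\Psi_{h,\gamma}^{-1}\,\Psi_{h,\gamma}\,e^{2\pi i\gamma(k-n)t}\,dt=\delta_{n,k}$. Both claims of the proposition will be obtained by computing these periodizations explicitly for Gaussians.

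\textbf{Step 1 (the generator $\varphi_\xi$).} Since $\varphi$ is real, completing the square gives
$$\varphi_\xi(t)=e^{-\frac{(t-\xi)^2+t^2}{2\sigma^2}}=e^{-\frac{\xi^2}{4\sigma^2}}\,T_{\xi/2}g(t),\qquad g(t)=e^{-t^2/\sigma^2}.$$
Hence $\mathcal F\varphi_\xi(\omega)=e^{-\frac{\xi^2}{4\sigma^2}}e^{-\pi i\xi\omega}\,\sigma\sqrt\pi\,e^{-\pi^2\sigma^2\omega^2}$, and the modulation factor drops out of $|\mathcal F\varphi_\xi|^2$. With $\gamma=\beta/2$ we therefore need
$$\Psi(t)=\pi\sigma^2e^{-\frac{\xi^2}{2\sigma^2}}\sum_{n\in\Z}e^{-2\pi^2\sigma^2(t+\frac{2n}{\beta})^2}.$$
Applying Poisson summation to the Gaussian $s\mapsto e^{-2\pi^2\sigma^2(t+2s/\beta)^2}$ turns this into
$$\Psi(t)=C_{\sigma,\beta}\,e^{-\frac{\xi^2}{2\sigma^2}}\sum_{k\in\Z}e^{-\frac{\beta^2k^2}{8\sigma^2}}e^{\pi i\beta kt}=C_{\sigma,\beta}\,e^{-\frac{\xi^2}{2\sigma^2}}\,\vartheta_3\!\left(\tfrac{\beta}{2}\pi t,\,e^{-\frac{\beta^2}{8\sigma^2}}\right),$$
with an explicit constant $C_{\sigma,\beta}>0$. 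By the extremal property of $\vartheta_3$ recalled above, $\Psi$ lies between its values at $z=\pi/2$ and $z=0$. The value at $z=0$ is finite. For the value at $z=\pi/2$ I will use the Jacobi triple product,
$$\vartheta_3(\tfrac\pi2,q)=\prod_{m\ge1}(1-q^{2m})(1-q^{2m-1})^2>0,$$
so $\Psi$ is bounded below by a positive constant. This gives the Riesz basis property for $V^2_{\beta/2}(\varphi_\xi)$.

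\textbf{Step 2 (the generator $\varphi$).} The same computation with $|\mathcal F\varphi(\omega)|^2=2\pi\sigma^2e^{-4\pi^2\sigma^2\omega^2}$ and step $\beta$ gives $\Psi_\beta(t)$ as a positive multiple of $\vartheta_3\bigl(\pi\beta t,e^{-\beta^2/(4\sigma^2)}\bigr)$. The same extremal bounds then give $\beta A_2\le\Psi_\beta\le\beta B_2$, which is \eqref{eq90}.

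\textbf{Step 3 (the dual formula).} Inserting Step 1 into $\mathcal F\widetilde{\varphi_\xi}=\frac{\beta}{2}\mathcal F\varphi_\xi/\Psi$ gives
$$\mathcal F\widetilde{\varphi_\xi}(\omega)=K\,e^{\frac{\xi^2}{4\sigma^2}}\,e^{-\pi i\xi\omega}\,\Lambda(\omega)$$
for some constant $K>0$. Here the factor $e^{\xi^2/(4\sigma^2)}$ appears because the numerator carries $e^{-\xi^2/(4\sigma^2)}$ while the denominator carries its square, and the chirp $e^{-\pi i\xi\omega}$ is the Fourier image of $T_{\xi/2}$. Inverting the Fourier transform yields $\widetilde{\varphi_\xi}=K\,e^{\frac{\xi^2}{4\sigma^2}}\,T_{\xi/2}\mathcal F^{-1}\Lambda$. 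It remains to check that the constant is $K=\sqrt2$. This factor comes from tracking $\sigma\sqrt\pi$ against $C_{\sigma,\beta}$ from the Poisson summation, and I will verify it directly through the biorthogonality relation $\langle\widetilde{\varphi_\xi},\varphi_\xi\rangle=1$.

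\textbf{Main obstacle.} I expect the main difficulty to be bookkeeping rather than concept: getting the Poisson-summation constant right so that the theta nome comes out exactly as $e^{-\beta^2/(8\sigma^2)}$, the argument as $\frac{\beta}{2}\pi t$, and the prefactor as exactly $\sqrt2$. The only genuinely analytic input is the strict positivity of $\vartheta_3(\pi/2,q)$, which the product formula settles.
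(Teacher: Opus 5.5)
Your proposal is correct and follows the paper's proof essentially step for step. In both, Poisson summation turns the periodizations of $|\mathcal F\varphi|^2$ (step $\beta$) and $|\mathcal F\varphi_\xi|^2$ (step $\beta/2$) into positive multiples of $\vartheta_3$, the extremal property of $\vartheta_3$ gives the Riesz bounds, and the dual is read off from $\mathcal F\widetilde h=\gamma\,\mathcal F h/\Psi$ (the paper's Lemma~\ref{lem3}). Your Jacobi triple-product argument for $\vartheta_3(\tfrac{\pi}{2},q)>0$ makes explicit a positivity point the paper leaves implicit. The constant does come out as $\frac{\beta}{2}\sqrt{\pi}\sigma\big/\bigl(\frac{\sqrt{\pi}\sigma\beta}{2\sqrt{2}}\bigr)=\sqrt{2}$, consistent with your biorthogonality check.
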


  \medskip

\section{Uniqueness of STLCT phase retrieval in Gaussian shift-invariant spaces}\label{uniq.sec}

\medskip 

Let $V_{\beta}^{\infty}(\varphi)$ be the Gaussian shift-invariant space defined in \eqref{gauss.sis.def}. For complex-valued signals in $V_{\beta}^{\infty}(\varphi)$, phaseless pointwise sampling does not in general yield phase retrieval because of the conjugate ambiguity \cite{chen2024conjugate,chen2022phase, grochenig2020phase}. In contrast, phaseless Gabor measurements provide a natural alternative. Grohs and Liehr \cite{grohs2023injectivity} proved that any $f \in V_{\beta}^{\infty}(\varphi)$ is uniquely determined, up to a global phase, by its phaseless Gabor measurements $|\mathcal{V}_{\varphi} f(\frac{\beta}{2+\epsilon}\Z, \Z)|$ for any $\epsilon > 0$. Subsequently, they considered the semi-discrete grid $\frac{\beta}{2}\Z \times \R$ and further derived an explicit reconstruction formula and quantitative local stability estimates \cite{grohs2024stable}.  Since the short-time linear canonical transform (STLCT) in \eqref{stlct.def} generalizes the Gabor transform and the parameter matrix $\mathbf{A}$ provides additional flexibility, it is natural to ask whether the corresponding uniqueness and stability results for phase retrieval continue to hold for phaseless STLCT measurements.

More precisely, given $f\in V_{\beta}^{\infty}(\varphi)$, we investigate  whether $f$ can be recovered, up to a global unimodular constant, from its phaseless STLCT measurements $\bigl|S_{\check{\varphi}}^{\mathbf A}f(\tfrac{\beta}{2}\mathbb Z,\mathbb R)\bigr|$,  where $\check{\varphi}$ is the chirp-modulated Gaussian introduced in \eqref{def.auxia}. Start from showing that  the auxiliary family $
f_\xi:=(T_\xi f)\overline f$ lives in the shift-invariant space $V_{\beta/2}^{\infty}(\varphi_\xi)$ for every $\xi\in\mathbb R$ in Lemma \ref{auxi.lem}. Hence $f_\xi$ admits a biorthogonal expansion as in \eqref{bio.expa.inf} with respect to the translates of the dual generator $\widetilde{\varphi_\xi}$, and the coefficients in the biorthogonal expansion are determined by the phaseless STLCT measurements, see Lemma \ref{lem4}. This allows us to recover the family $\{f_\xi \}_{\xi \in\mathbb R}$ and consequently reconstruct $f$ up to a global unimodular constant. 

The main result of this section is Theorem \ref{pr.gassian.thm1}, in which we prove that every function $f$  in the Gaussian shift-invariant space $V_{\beta}^{\infty}(\varphi)$ is uniquely determined, up to a global unimodular constant, by its phaseless STLCT measurements $
\bigl|S_{\check{\varphi}}^{\mathbf A}f(\tfrac{\beta}{2}\mathbb Z,\mathbb R)\bigr|$. Moreover, Theorem \ref{pr.gassian.thm1} provides an explicit reconstruction formula for $f$.

\smallskip

We begin with the fact that the auxiliary family $f_\xi$ remains in a Gaussian shift-invariant space.

\begin{lemma}\cite[Proposition 3.3]{grohs2024stable}\label{auxi.lem}
	Let $\varphi=e^{-\frac{\cdot^2}{2\sigma^2}}$  be the Gaussian function and let $V_{\beta}^{\infty}(\varphi)$ be the shift-invariant space with step-size $\beta>0$ in \eqref{gauss.sis.def}. If $f\in V_{\beta}^{\infty}(\varphi)$ and $f_\xi, \varphi_\xi$ are defined by  \eqref{def.auxic}, then
	$$f_\xi \in V_{\beta/2}^{\infty}(\varphi_\xi)$$
	for every $\xi\in \R$. 
\end{lemma}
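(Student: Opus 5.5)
Write $f=\sum_{k\in\Z} c_k\,\varphi(\cdot-\beta k)$ with $c\in\ell^\infty(\Z)$. We get $f_\xi(t)=f(t-\xi)\overline{f(t)}$ as the absolutely convergent double series
\[
f_\xi(t)=\sum_{k,l\in\Z} c_k\overline{c_l}\,\varphi(t-\xi-\beta k)\,\varphi(t-\beta l).
\]
The key computation is the Gaussian product identity. Completing the square, for $u,v\in\R$,
\[
\varphi(t-u)\varphi(t-v)=e^{-\frac{(u-v)^2}{4\sigma^2}}\,e^{-\frac{(t-\frac{u+v}{2})^2}{\sigma^2}}.
\]
Apply this with $u=\xi+\beta k$ and $v=\beta l$. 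The product is a Gaussian of variance $\sigma^2/2$ centered at $\frac{\xi}{2}+\frac{\beta}{2}(k+l)$, with amplitude $e^{-\frac{(\xi+\beta(k-l))^2}{4\sigma^2}}$.

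On the other hand, $\varphi_\xi(t)=\varphi(t-\xi)\varphi(t)$, since $\varphi$ is real. By the same identity,
\[
\varphi_\xi(t)=e^{-\frac{\xi^2}{4\sigma^2}}\,e^{-\frac{(t-\xi/2)^2}{\sigma^2}},
\]
and therefore
\[
e^{-\frac{(t-\frac{\xi}{2}-\frac{\beta}{2}n)^2}{\sigma^2}}=e^{\frac{\xi^2}{4\sigma^2}}\,\varphi_\xi\!\bigl(t-\tfrac{\beta}{2}n\bigr).
\]
Group the double sum by $n=k+l$ and $m=k-l$, where $m\equiv n \pmod 2$. This gives
\[
f_\xi=\sum_{n\in\Z} d_n\,\varphi_\xi\!\bigl(\cdot-\tfrac{\beta}{2}n\bigr),\qquad d_n=e^{\frac{\xi^2}{4\sigma^2}}\sum_{\substack{k,l\in\Z\\ k+l=n}} c_k\overline{c_l}\,e^{-\frac{(\xi+\beta(k-2\cdot 0-l))^2}{4\sigma^2}},
\]
that is, $d_n=e^{\frac{\xi^2}{4\sigma^2}}\sum_{k+l=n} c_k\overline{c_l}\,e^{-\frac{(\xi+\beta(k-l))^2}{4\sigma^2}}$.

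It remains to justify the rearrangement and to show $\{d_n\}\in\ell^\infty(\Z)$. This is the only real (mild) obstacle. For fixed $n$, the inner sum is bounded by
\[
\|c\|_\infty^2\sum_{m\in\Z} e^{-\frac{(\xi+\beta m)^2}{4\sigma^2}}\le \|c\|_\infty^2\, C(\beta,\sigma),
\]
uniformly in $n$. The same $m$-summability, combined with the Gaussian decay in $n$ of the factor $e^{-(t-\xi/2-\beta n/2)^2/\sigma^2}$ for fixed $t$, shows that the double series converges absolutely for each $t$. Hence regrouping by Fubini for sums is legitimate.

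Therefore $d\in\ell^\infty(\Z)$ with $\|d\|_\infty\le e^{\frac{\xi^2}{4\sigma^2}}C(\beta,\sigma)\|c\|_\infty^2$, which gives $f_\xi\in V^\infty_{\beta/2}(\varphi_\xi)$ for every $\xi\in\R$.
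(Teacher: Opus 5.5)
Your proof is correct. The Gaussian product identity, the regrouping by $n=k+l$, the uniform bound $|d_n|\le e^{\frac{\xi^2}{4\sigma^2}}\|c\|_\infty^2\sum_{m\in\Z}e^{-\frac{(\xi+\beta m)^2}{4\sigma^2}}$, and the Tonelli justification for regrouping are all sound. This is essentially the standard argument behind \cite[Proposition 3.3]{grohs2024stable}, which the paper cites without giving a proof of its own; the same completing-the-square step appears in the paper's proof of Lemma~\ref{lem:entire-stlct-magnitude}.

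Only cosmetic fixes are needed: drop the stray ``$k-2\cdot 0-l$'' in your first formula for $d_n$, and rename your constant $C(\beta,\sigma)$ so it does not clash with the paper's $C(\sigma,\beta)$ from \eqref{Csigma.def}.
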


Thus, for each fixed $\xi\in\R$, the function $f_\xi$ belongs to a Gaussian shift-invariant space with step-size $\beta/2$. The next lemma shows that the coefficients arising from the biorthogonal  representation are encoded in the phaseless STLCT measurements.

\begin{lemma}\label{lem4}
	Let $f\in L^{\infty}(\R)$ and $\omega\in L^2(\R)$. Then for any $x, \xi\in \R$,
	\begin{equation}\label{eq1}
		\langle f_\xi, T_x \omega_\xi \rangle
		= e^{\frac{2\pi iax\xi}{b}} (\mathcal{F} |S_{\check{\omega}}^{\mathbf{A}} f(x,\cdot)|^2)\Bigl(\frac{\xi}{b}\Bigr).
	\end{equation}
\end{lemma}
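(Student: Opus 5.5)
The plan is a direct computation: use \eqref{eq59} to rewrite the phaseless STLCT measurement as the squared modulus of a Fourier transform of an $L^2$ function, and then use Parseval's identity to turn the Fourier transform of that squared modulus into an autocorrelation.

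\emph{Step 1: absorb the chirps.} Fix $x\in\R$. By \eqref{eq59} with window $\check\omega$,
\[
\mathcal S_{\check\omega}^{\mathbf A}f(x,\eta)=\tfrac{1}{\sqrt{ib}}e^{i\pi\frac{d}{b}\eta^2}\,\mathcal F f^{\check\omega,x}\bigl(\tfrac{\eta}{b}\bigr).
\]
By \eqref{def.auxia} and \eqref{def.auxib},
\[
f^{\check\omega,x}(t)=f(t)\overline{\omega(t-x)}\,e^{-i\pi\frac ab (t-x)^2}e^{i\pi\frac ab t^2}=e^{-i\pi\frac ab x^2}\,g(t),
\qquad g(t):=f(t)\overline{\omega(t-x)}\,e^{2\pi i\frac{a}{b}xt}.
\]
Since $f\in L^\infty(\R)$ and $\omega\in L^2(\R)$, we have $g\in L^2(\R)$. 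Because the prefactors are unimodular apart from $1/\sqrt b$, this gives
\[
|\mathcal S_{\check\omega}^{\mathbf A}f(x,\eta)|^2=\tfrac1b\,|\mathcal F g(\eta/b)|^2 .
\]
Here $\mathcal Fg$ is the $L^2$ Fourier transform. The function $\eta\mapsto|\mathcal S_{\check\omega}^{\mathbf A}f(x,\eta)|^2$ lies in $L^1(\R)$, so its Fourier transform is defined classically.

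\emph{Step 2: rescale and apply Parseval.} Substituting $\eta=bu$ (recall $b>0$) gives
\[
\bigl(\mathcal F|\mathcal S_{\check\omega}^{\mathbf A}f(x,\cdot)|^2\bigr)\bigl(\tfrac{\xi}{b}\bigr)
=\int_\R |\mathcal Fg(u)|^2e^{-2\pi i u\xi}\,du
=\bigl\langle \mathcal Fg,\;M_{\xi}\mathcal Fg\bigr\rangle,
\]
where $M_\xi h(u):=e^{2\pi i u\xi}h(u)$. Modulation by $\xi$ on the Fourier side corresponds to translation by $\xi$ on the time side: $M_\xi\mathcal Fg=\mathcal F(T_{\xi}g)$ with $T_\xi g=g(\cdot-\xi)$, using the sign convention $\mathcal Fg(u)=\int g(t)e^{-2\pi iut}\,dt$. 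Parseval on $L^2(\R)$ then yields
\[
\int_\R g(t)\,\overline{g(t-\xi)}\,dt .
\]
I will double-check this sign carefully, because it determines which factor gets conjugated.

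\emph{Step 3: identify the inner product.} Inserting the definition of $g$,
\[
\int_\R g(t)\,\overline{g(t-\xi)}\,dt
=\int_\R f(t)\overline{f(t-\xi)}\;\overline{\omega(t-x)}\,\omega(t-\xi-x)\;e^{2\pi i\frac ab x\xi}\,dt .
\]
The product $f(t)\overline{f(t-\xi)}$ is the complex conjugate of $f_\xi(t)$. Therefore, depending on the sign bookkeeping in Step 2, this integral equals either
\[
\overline{\langle f_\xi,T_x\omega_\xi\rangle}\,e^{2\pi i a x\xi/b}
\quad\text{or}\quad
\langle f_\xi,T_x\omega_\xi\rangle\,e^{2\pi i a x\xi/b}.
\]
Writing the Parseval step as $\langle M_\xi\mathcal Fg,\mathcal Fg\rangle$ versus $\langle \mathcal Fg,M_\xi\mathcal Fg\rangle$ switches between $\xi$ and $-\xi$. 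The left-hand side of \eqref{eq1} is
\[
\langle f_\xi,T_x\omega_\xi\rangle=\int_\R f(t-\xi)\overline{f(t)}\,\overline{\omega(t-x-\xi)}\,\omega(t-x)\,dt .
\]
Comparing with the integral above, I expect the correct orientation to be $\int g(t-\xi)\overline{g(t)}\,dt$. This equals
\[
e^{-2\pi i a x\xi/b}\,\langle f_\xi,T_x\omega_\xi\rangle .
\]
Multiplying through by $e^{2\pi i a x\xi/b}$ then gives \eqref{eq1}.

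\emph{Main obstacle.} The only real difficulty is tracking the sign and conjugation conventions consistently across three places: the Fourier transform, the modulation/translation correspondence, and the direction of the chirp phase. Getting these right is what produces exactly the factor $e^{2\pi i a x\xi/b}$ rather than its conjugate. The analytic justification is routine: $g\in L^2$ makes Parseval applicable, and $|\mathcal Fg|^2\in L^1$ makes the classical Fourier integral well defined.
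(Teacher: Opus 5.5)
Your plan matches the paper's proof in substance. Both reduce $\mathcal F|S_{\check\omega}^{\mathbf A}f(x,\cdot)|^2$ to the autocorrelation of $f^{\check\omega,x}$. The paper does this via the convolution theorem $|\mathcal Fh|^2=\mathcal F^{-1}(\mathcal F^2h*\overline h)$, and you do it via Parseval. Your route is actually the cleaner one: $\int_\R|\mathcal Fg(u)|^2e^{-2\pi iu\xi}\,du=\langle\mathcal Fg,M_\xi\mathcal Fg\rangle$ is an honest $L^2$ pairing, so no Fourier inversion of the autocorrelation is needed. Step 1 and the rescaling $\eta=bu$ are correct.

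The error is in the one step the lemma hinges on. With your convention, $\mathcal F(T_\xi g)(u)=\int_\R g(t-\xi)e^{-2\pi iut}\,dt=e^{-2\pi iu\xi}\mathcal Fg(u)$. So $M_\xi\mathcal Fg=\mathcal F(T_{-\xi}g)$, not $\mathcal F(T_\xi g)$. Parseval therefore gives
\[
\langle\mathcal Fg,M_\xi\mathcal Fg\rangle=\langle g,T_{-\xi}g\rangle=\int_\R g(t)\,\overline{g(t+\xi)}\,dt=\int_\R g(t-\xi)\,\overline{g(t)}\,dt .
\]
The quantity $\int_\R g(t)\overline{g(t-\xi)}\,dt$ that you wrote down is the complex conjugate of this. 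Carried through Step 3, it yields $e^{2\pi iax\xi/b}\,\overline{\langle f_\xi,T_x\omega_\xi\rangle}$, which is not \eqref{eq1}.

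You then switch to the other orientation because it matches the desired conclusion, which is not a derivation. The second ``either/or'' alternative in Step 3 also has the wrong phase: $\int_\R g(t-\xi)\overline{g(t)}\,dt=e^{-2\pi iax\xi/b}\langle f_\xi,T_x\omega_\xi\rangle$, as you compute a line later.

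The whole content of the lemma is this sign and conjugation bookkeeping, so the correction has to be made explicitly. With $M_\xi\mathcal Fg=\mathcal F(T_{-\xi}g)$ the chain closes on its own: $(\mathcal F|S_{\check\omega}^{\mathbf A}f(x,\cdot)|^2)(\xi/b)=e^{-2\pi iax\xi/b}\langle f_\xi,T_x\omega_\xi\rangle$, which is exactly \eqref{eq1}.
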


The proof of Lemma \ref{lem4} will be given in Appendix \ref{proof.lem4}.

\medskip

We are now in a position to state the main uniqueness result.  Lemmas \ref{auxi.lem} and \ref{lem4} provide the two ingredients needed for phase retrieval from phaseless STLCT measurements.  Lemma \ref{auxi.lem} shows that  the auxiliary function $f_\xi$ belongs to the Gaussian shift-invariant space $V_{\beta/2}^{\infty}(\varphi_\xi)$, and hence admits a biorthogonal expansion as in \eqref{bio.expa.inf}. Lemma \ref{lem4} shows that the coefficients in this expansion are encoded in the phaseless STLCT measurements. Combining these facts yields the following theorem.

\begin{theorem}\label{pr.gassian.thm1}
	Let $f$ be the function in the Gaussian shift-invariant space $V_{\beta}^{\infty}(\varphi)$ in \eqref{gauss.sis.def} and  $p\in \R$ be such that $f(p)\ne 0$. Then $f$ is uniquely determined, up to a global unimodular constant, from $
	\big|S_{\check{\varphi}}^{\mathbf{A}} f(\tfrac{\beta }{2} \Z,\R)\big|$.  
	Moreover, there exists a unimodular constant $\tau\in \T$ such that
	\begin{align*}
		f(p+\xi)
		=
		\tau |f(p)|^{-1}
		\sum_{n\in \Z}
		e^{-\frac{\pi ia\beta n\xi}{b}}
		\left(\int_{\R} |S_{\check{\varphi}}^{\mathbf{A}} f(\tfrac{\beta  n}{2},t)|^2 e^{\frac{2\pi i \xi t}{b}} dt \right)
		T_{\frac{\beta}{2}n} \widetilde{\varphi_\xi }(p+\xi), \ \xi\in \R,
	\end{align*}
	where
	\begin{equation}\label{abs.f.ed}
		|f(p)|
		=
		\left(
		\sum_{n\in \Z}
		\left(\int_{\R} |S_{\check{\varphi}}^{\mathbf{A}} f(\tfrac{\beta  n}{2},t)|^2 dt \right)
		T_{\frac{\beta}{2}n} \widetilde{\varphi}_{\xi=0}(p)
		\right)^{\frac{1}{2}}.
	\end{equation}
\end{theorem}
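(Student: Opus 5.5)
The plan is to combine Lemma \ref{auxi.lem}, Proposition \ref{prop.bio.expa} (applied with generator $\varphi_\xi$ and step-size $\beta/2$), and Lemma \ref{lem4} with $\omega=\varphi$.

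Fix $\xi\in\R$. By Lemma \ref{auxi.lem}, $f_\xi\in V^\infty_{\beta/2}(\varphi_\xi)$. Since $\varphi_\xi(t)=e^{-\frac{(t-\xi)^2+t^2}{2\sigma^2}}=e^{-\frac{\xi^2}{4\sigma^2}}e^{-\frac{(t-\xi/2)^2}{\sigma^2}}$ is a translated and rescaled Gaussian, I would verify that the biorthogonal expansion of Proposition \ref{prop.bio.expa} transfers to this generator. The Riesz basis property and the explicit dual $\widetilde{\varphi_\xi}$ are given in Proposition \ref{prop3}. The argument in \cite{grohs2024stable} depends only on the Gaussian form and on the exponential decay of the dual, which is visible from \eqref{dual.phi_w} since $\Lambda$ is analytic in a strip. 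This gives
\[
f_\xi(t)=\sum_{n\in\Z}\langle f_\xi, T_{\frac{\beta}{2}n}\varphi_\xi\rangle\, T_{\frac{\beta}{2}n}\widetilde{\varphi_\xi}(t),
\]
with convergence uniform on compacts. This transfer is the main point needing care. If it cannot be quoted directly, I would rerun the proof for the generator $e^{-(\cdot-\xi/2)^2/\sigma^2}$ with step $\beta/2$, using $L^\infty$ coefficients and exponential decay of $\widetilde{\varphi_\xi}$.

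Next, I would apply Lemma \ref{lem4} with $\omega=\varphi$ and $x=\frac{\beta n}{2}$:
\[
\langle f_\xi,T_{\frac{\beta n}{2}}\varphi_\xi\rangle=e^{\frac{\pi i a\beta n\xi}{b}}\int_\R |S^{\mathbf A}_{\check\varphi}f(\tfrac{\beta n}{2},t)|^2 e^{-2\pi i \xi t/b}\,dt .
\]
Here the integral converges because $S^{\mathbf A}_{\check\varphi}f(x,\cdot)$ is, by \eqref{eq59}, a Fourier transform of the $L^2$ function $f\overline{\check\varphi(\cdot-x)}e^{i\pi a t^2/b}$. Hence every $f_\xi$, and in particular $f_\xi(t)$ for all $t$, is determined by the data. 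I would then track the sign convention: the stated formula has $e^{-\pi i a\beta n\xi/b}$ and $e^{+2\pi i\xi t/b}$, which is exactly the complex conjugate of these coefficients. This corresponds to expanding $\overline{f_\xi}$ instead. The real generator gives $\overline{\widetilde{\varphi_\xi}}=\widetilde{\varphi_\xi}$, since $\mathcal F^{-1}\Lambda$ is real because $\Lambda$ is even and real. So the stated formula computes $\overline{f_\xi(p+\xi)}=f(p)\overline{f(p+\xi)}$ evaluated at $t=p+\xi$. I would reconcile with whichever convention Lemma \ref{lem4} effectively produces.

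Taking $\xi=0$ gives $f_0=|f|^2$, so \eqref{abs.f.ed} holds and $|f(p)|$ is determined. Setting $\tau=\overline{f(p)}/|f(p)|\in\T$ (or its conjugate, depending on the convention above), evaluating the expansion at $t=p+\xi$ and dividing by $|f(p)|$ gives $f(p+\xi)$ for all $\xi$, which yields the reconstruction formula. Uniqueness then follows. If $g$ has the same measurements, then $g_\xi=f_\xi$ for all $\xi$, so $|g(p)|=|f(p)|\neq0$ and $g(p+\xi)=\frac{\overline{f(p)}}{\overline{g(p)}}f(p+\xi)$, and the ratio $\overline{f(p)}/\overline{g(p)}$ is unimodular.
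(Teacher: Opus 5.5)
Your proposal is correct and follows the paper's route. You expand $f_\xi\in V^\infty_{\beta/2}(\varphi_\xi)$ biorthogonally via Lemma~\ref{auxi.lem} and Proposition~\ref{prop.bio.expa}, read off the coefficients from Lemma~\ref{lem4} with $\omega=\varphi$, evaluate at $p+\xi$, and set $\xi=0$ to get $|f(p)|$. Your observation that $\widetilde{\varphi_\xi}$ is real is what the paper uses tacitly when it conjugates the expansion.

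There is one slip. In fact $\overline{f_\xi(p+\xi)}=\overline{f(p)}\,f(p+\xi)$, not $f(p)\overline{f(p+\xi)}$. So no convention is left open, since Lemma~\ref{lem4} fixes the signs. The correct constant is therefore $\tau=f(p)/|f(p)|$, as in the paper, not your first choice $\overline{f(p)}/|f(p)|$.
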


\begin{proof}
	Since $f(p)\neq 0$, we may write $
	\tau:=\frac{f(p)}{|f(p)|}\in\T$. 
	Then, by the definition of $f_\xi$ in \eqref{def.auxic}, we have
	\begin{equation}\label{uni.thm.eq1}
		f(p+\xi)=\tau |f(p)|^{-1}\overline{f_\xi(p+\xi)}, \  \xi\in\R.
	\end{equation}

	By Lemma \ref{auxi.lem} and Proposition \ref{prop.bio.expa}, the function $f_\xi \in V_{\beta/2}^{\infty}(\varphi_\xi)$ admits the biorthogonal expansion
	\begin{equation}\label{uni.thm.eq2}
		f_\xi
		=
		\sum_{n\in \Z}
		\langle f_\xi , T_{\frac{\beta}{2} n} \varphi_\xi \rangle
		T_{\frac{\beta}{2} n} \widetilde{\varphi_\xi}
	\end{equation}
	 for every $\xi\in\R$, where the series converges uniformly on compact intervals of $\R$.

	Next, Lemma~\ref{lem4} with $\omega=\varphi$ and $x=\frac{\beta}{2}n$ gives
	\begin{equation}\label{uni.thm.eq3}
		\langle f_\xi , T_{\frac{\beta}{2} n} \varphi_\xi \rangle
		= e^{\frac{\pi ia\beta n\xi}{b}}
		\int_{\R} |S_{\check{\varphi}}^{\mathbf{A}} f(\tfrac{\beta  n}{2},t)|^2 e^{-\frac{2\pi i \xi t}{b}} dt, \  n\in\Z.
	\end{equation}
	Substituting \eqref{uni.thm.eq3} into \eqref{uni.thm.eq2}, evaluating at $t=p+\xi$, and combining with \eqref{uni.thm.eq1}, we obtain
	\begin{align*}
		f(p+\xi) = 	\tau |f(p)|^{-1}
		\sum_{n\in \Z}
		e^{-\frac{\pi ia\beta n\xi}{b}}
		\left(\int_{\R} |S_{\check{\varphi}}^{\mathbf{A}} f(\tfrac{\beta  n}{2},t)|^2 e^{\frac{2\pi i\xi t}{b}} dt \right)
		T_{\frac{\beta}{2}n} \widetilde{\varphi_\xi }(p+\xi), \  \xi\in \R.
	\end{align*}

	It remains to determine $|f(p)|$ from the measurements. Setting $\xi=0$ in \eqref{uni.thm.eq2} gives 
		 $$ f_0(p) =
		\sum_{n\in\Z}
		\langle f_0,T_{\frac{\beta}{2}n}\varphi_{\xi=0}\rangle\,
		T_{\frac{\beta}{2}n}\widetilde{\varphi}_{\xi=0}(p).$$
	Since $f_0=|f|^2$, we have $f_0(p)=|f(p)|^2$. Moreover, by \eqref{uni.thm.eq3} with $\xi=0$,
 $$ \langle f_0,T_{\frac{\beta}{2}n}\varphi_{\xi=0}\rangle = 	\int_{\R}|S_{\check{\varphi}}^{\mathbf{A}} f(\tfrac{\beta}{2}n,t)|^2\,dt.$$
	Therefore,
	$$ |f(p)|^2 = 	\sum_{n\in \Z}\left(\int_{\R} |S_{\check{\varphi}}^{\mathbf{A}} f(\tfrac{\beta  n}{2},t)|^2 dt \right)T_{\frac{\beta}{2}n} \widetilde{\varphi}_{\xi=0}(p),$$
	and hence
\begin{equation*}
		|f(p)| =\left(
		\sum_{n\in \Z}
		\left(\int_{\R} |S_{\check{\varphi}}^{\mathbf{A}} f(\tfrac{\beta  n}{2},t)|^2 dt \right)
		T_{\frac{\beta}{2}n} \widetilde{\varphi}_{\xi=0}(p)
		\right)^{\frac{1}{2}}.
\end{equation*}
	This proves the reconstruction formula and shows that $f$ is uniquely determined, up to a unimodular constant, by $
	|S_{\check{\varphi}}^{\mathbf{A}} f(\tfrac{\beta }{2} \Z,\R)|$.
	\qedhere
\end{proof}

We note that $|f(p)|$ does not need to be prescribed in advance, since it is determined by the phaseless STLCT measurements through \eqref{abs.f.ed}.

\smallskip

For $$
\mathbf{A}=\begin{pmatrix} 0 & 1 \\ -1 & 0 \end{pmatrix},$$ 
the STLCT becomes the Gabor transform $\mathcal{V}_{\varphi}$, and Theorem \ref{pr.gassian.thm1} yields  \cite[Theorem 3.6]{grohs2024stable}.

\begin{corollary}[{\cite[Theorem 3.6]{grohs2024stable}}]\label{uni.gabor}
	Let $\varphi=e^{-\frac{\cdot^2}{2\sigma^2}}$  be the Gaussian function and let $V_{\beta}^{\infty}(\varphi)$ be the shift-invariant space with step-size $\beta>0$ in \eqref{gauss.sis.def}.  Then $f\in V_{\beta}^{\infty}(\varphi)$ is uniquely determined, up to a unimodular constant, by $|\mathcal{V}_\varphi f(\frac{\beta }{2} \Z,\R)|$. Moreover, there exists a unimodular constant $\tau\in \T$ such that
	\begin{align*}
		f(p+\xi)
		=
		\tau |f(p)|^{-1}
		\sum_{n\in \Z}
		\left(\int_{\R} |\mathcal{V}_\varphi f(\tfrac{\beta  n}{2},t)|^2 e^{2\pi i \xi t} dt \right)
		T_{\frac{\beta}{2}n} \widetilde{\varphi_\xi}(p+\xi), \  \xi\in \R,
	\end{align*}
	where
	\begin{equation*}
		|f(p)|
		=
		\left(
		\sum_{n\in \Z}
		\left(\int_{\R} |\mathcal{V}_\varphi f(\tfrac{\beta  n}{2},t)|^2 dt \right)
		T_{\frac{\beta}{2}n} \widetilde{\varphi}_{\xi=0}(p)
		\right)^{\frac{1}{2}}.
	\end{equation*}
\end{corollary}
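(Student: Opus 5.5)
The corollary is the special case $\mathbf{A}=\begin{pmatrix} 0 & 1 \\ -1 & 0 \end{pmatrix}$ of Theorem \ref{pr.gassian.thm1}, so the plan is to specialize that theorem rather than repeat its argument. First I will check that with $a=0$, $b=1$, $c=-1$, $d=0$ the chirp modulation in \eqref{def.auxia} is trivial. Indeed $e^{i\pi\frac{a}{b}t^2}=1$, so $\check{\varphi}=\varphi$ and $\check f=f$. Then \eqref{eq59} gives
\begin{equation*}
\mathcal{S}_{\check\varphi}^{\mathbf A}f(x,\xi)=\tfrac{1}{\sqrt{i}}\,\mathcal{V}_\varphi f(x,\xi),
\end{equation*}
since $e^{i\pi\frac{d}{b}\xi^2}=1$ and $\xi/b=\xi$. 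Consequently $|S_{\check{\varphi}}^{\mathbf{A}} f(\tfrac{\beta n}{2},t)|^2=|\mathcal{V}_\varphi f(\tfrac{\beta n}{2},t)|^2$, because $|1/\sqrt{i}|=1$.

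Next I substitute these identities into the reconstruction formula of Theorem \ref{pr.gassian.thm1}. The factor $e^{-\frac{\pi i a\beta n\xi}{b}}$ equals $1$ since $a=0$, and $e^{\frac{2\pi i\xi t}{b}}$ becomes $e^{2\pi i\xi t}$. The dual generators $\widetilde{\varphi_\xi}$ from Proposition \ref{prop3} do not depend on $\mathbf A$, so they carry over unchanged. The same substitution turns \eqref{abs.f.ed} into the stated formula for $|f(p)|$.

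The one real point to check is that the theorem assumes a $p$ with $f(p)\neq0$. If $f\equiv0$, all measurements vanish and uniqueness is trivial. Otherwise, continuity of $f$ (it is a locally uniformly convergent sum of Gaussians) gives such a $p$, and in fact $|f(p)|$ computed from the data by the formula above tells us which $p$ are admissible. Uniqueness up to a unimodular constant then follows from the theorem. Apart from this, the only thing to watch is the sign and normalization bookkeeping of the kernel $K_{\mathbf A}$; there is no genuine obstacle.
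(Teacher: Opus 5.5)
Your proposal is correct and follows the paper's route: the paper obtains this corollary by specializing Theorem \ref{pr.gassian.thm1} to $\mathbf A=\begin{pmatrix}0&1\\-1&0\end{pmatrix}$. In that case $\check\varphi=\varphi$ and $|S_{\check\varphi}^{\mathbf A}f|=|\mathcal V_\varphi f|$, and the factors $e^{-\pi i a\beta n\xi/b}$ reduce to $1$. Your additional remark on choosing $p$ with $f(p)\neq 0$ (by continuity, with $|f(p)|$ read off from the data via the $\xi=0$ formula) is correct and makes explicit a point the paper leaves implicit.
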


\medskip
 
\section{Stability of STLCT phase retrieval in Gaussian shift-invariant spaces}\label{stable.sec}

\medskip 

Having established the uniqueness of STLCT phase retrieval in Gaussian shift-invariant space, we now turn to the question of stability. It is well known that phase retrieval cannot be uniformly stable in infinite-dimensional spaces \cite{alaifari2017phase,cahill2016phase}.  Accordingly, we work with a conditional notion of stability. Given an interval $I\subset\R$, we ask whether there exists a constant $\widetilde C>0$, depending on $f, g\in V_\beta^\infty(\varphi)$, such that
\begin{equation}\label{stable.eq.general}
\min_{\tau\in\mathbb T}\|f-\tau g\|_{L^\infty(I)}
\le \widetilde C
\bigl\||S_{\check{\varphi}}^{\mathbf A}f|^2-|S_{\check{\varphi}}^{\mathbf A}g|^2\bigr\|_{\mathcal D}%{\frac{\beta}{2},\infty}
\end{equation}
holds for all $g\in V_\beta^\infty(\varphi)$, where $\|\cdot\|_{\mathcal D}$ is a suitable norm on the measurement space. The main issue is then to understand how the stability constant depends on the geometric size of the interval and on lower bounds for the signal at suitable reference points.

In this section, we first establish such a stability estimate on a bounded interval, see Theorem \ref{local.stable.thm1}. More precisely, we show that the reconstruction of signals in the Gaussian shift-invariant space $V_\beta^\infty(\varphi)$ from phaseless STLCT measurements is stable  on an interval $I=[p-r,p+r]$. However, this local estimate also reveals an essential difficulty, that is  the corresponding stability constant contains the exponential factor $e^{\frac{r^2}{4\sigma^2}}$ depending on the radius $r$ of the interval. Consequently, if one applies the local estimate directly on a large interval, then the stability constant deteriorates exponentially with the interval length. This shows that a purely local argument is not sufficient for obtaining a meaningful stability result on large intervals.

To overcome this challenge, we adopt the following anchor point condition, which was introduced in the Gabor phase retrieval setting in \cite{grohs2024stable}.

\smallskip 
\noindent\textbf{Condition $(\mathbf P)$.} There exist $J\ge 2$ points
$$ p_1<p_2<\cdots<p_J$$
and a constant $\gamma>0$ such that
$$ |f(p_j)|\ge \gamma, \ 1\le j\le J.$$

Condition $(\mathbf P)$ provides a family of anchor points at which the signal is bounded away from zero. This makes it possible to replace a single stability estimate on a large interval by a phase propagation argument along adjacent intervals centered at the anchor points.  
The local stability estimate in Theorem~\ref{local.stable.thm1} is then applied only at the scale of the maximal distance between consecutive anchors. In Theorem~\ref{stable.thm2}, we show that the exponential factor in the stability bound is of the form $e^{\frac{r^2}{4\sigma^2}}$, where $
r:=\max_{1\le j\le J-1}(p_{j+1}-p_j)$, rather than depending on the total length of the interval. The passage from local to global stability therefore introduces only a linear accumulation with respect to the number of propagation steps, and in particular avoids exponential growth with the interval length.

The stability bound in \eqref{stable.eq.general} also depends on the constant
\begin{equation}\label{Csigma.def}
    C(\sigma,\beta):=  \sup_{t\in\R}\sum_{n\in\Z}     \left|T_{\frac{\beta}{2}n}\mathcal{F}^{-1}\Lambda(t)\right|,
\end{equation}
where $\Lambda$ is defined in \eqref{lambda.eq}. Under suitable assumptions on
$\sigma$ and $\beta$, there exist constants $K=K(\sigma,\beta)$ and
$\nu=\nu(\sigma,\beta)$ such that
\begin{align}\label{eq6}
    |\mathcal{F}^{-1}\Lambda(t)|\le Ke^{-\nu|t|}, \ t\in\R.
\end{align}
For example, if $\frac{\beta}{4}\le \sigma\le \frac{\beta}{2}\le 1$, then
\eqref{eq6} holds with $K\le \frac{205}{\sigma}$ and $\nu\ge \frac14$,
cf. \cite[Lemma~3.11]{grohs2024stable}. In this case, for every $t\in\R$,
\begin{align*}
    \sum_{n\in\Z}\left|T_{\frac{\beta}{2}n}\mathcal{F}^{-1}\Lambda(t)\right|
    &\le
    K\sum_{n\in\Z}e^{-\nu|t-\frac{\beta}{2}n|}  \le
    K\Bigl(2+\int_{\R}e^{-\frac{\nu\beta}{2}|x|}\,dx\Bigr)
    =
    2K\Bigl(1+\frac{2}{\nu\beta}\Bigr).
\end{align*}
Hence $C(\sigma,\beta)$ is well defined.

To measure the discrepancy of phaseless STLCT measurements, we introduce the mixed norm
$$\|F\|_{\alpha,\infty} := \sup_{n\in\mathbb Z}\|F(\alpha n,\cdot)\|_{L^1(\mathbb R)}, \ \alpha>0,$$
for measurable functions $F$ on $\alpha\mathbb Z\times\mathbb R$. If $F$ is
defined on $\mathbb R^2$, then we write
$$
\|F\|_{\alpha,\infty}:= \|F|_{\alpha\mathbb Z\times\mathbb R}\|_{\alpha,\infty},$$
where $F|_{\alpha\mathbb Z\times\mathbb R}$ denotes the restriction of $F$ to
$\alpha\mathbb Z\times\mathbb R$.

\smallskip 
We begin with a local stability estimate on a bounded interval.

\begin{theorem}\label{local.stable.thm1}
Let $f,g$ be the Gaussian shift-invariant functions in $V_\beta^\infty(\varphi)$, and let $p\in\R$ be such that
$f(p)\neq 0$ and $g(p)\neq 0$. For $r>0$, set  $ I:=[p-r,p+r].$
Then
\begin{equation}\label{local.stable.eq1}
\min_{\tau \in \T}\|f-\tau g\|_{L^\infty(I)} \le
\sqrt{2}\zeta\,C(\sigma,\beta)\bigl\||S_{\check{\varphi}}^{\mathbf A}f|^2-|S_{\check{\varphi}}^{\mathbf A}g|^2\bigr\|_{\frac{\beta}{2},\infty},
\end{equation}
where $\zeta := \frac{1}{|g(p)|} \left( \frac{\|f\|_{L^\infty(I)}}{|f(p)| + |g(p)|} + e^{\frac{r^2}{4\sigma^2}} \right)$ and $C(\sigma,\beta)$ is given in \eqref{Csigma.def}. 
\end{theorem}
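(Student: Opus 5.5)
We follow the proof of Theorem \ref{pr.gassian.thm1}, now comparing the reconstruction formulas for $f$ and $g$. The natural choice of unimodular constant is $\tau:=\frac{f(p)}{|f(p)|}\,\frac{\overline{g(p)}}{|g(p)|}$. For $t=p+\xi\in I$ (so $|\xi|\le r$) we then have
$$f(p+\xi)=\frac{\overline{f_\xi(p+\xi)}\,f(p)}{|f(p)|^2}, \qquad \tau g(p+\xi)=\frac{\overline{g_\xi(p+\xi)}\,f(p)}{|f(p)|\,|g(p)|}.$$
Here the identity $f_\xi(p+\xi)=f(p)\overline{f(p+\xi)}$ from \eqref{def.auxic} is used. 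We split
$$f(p+\xi)-\tau g(p+\xi)= f(p+\xi)\Bigl(1-\frac{|f(p)|}{|g(p)|}\Bigr)+\frac{f(p)}{|f(p)|\,|g(p)|}\,\overline{\bigl(f_\xi-g_\xi\bigr)(p+\xi)}.$$
The first term is bounded by $\|f\|_{L^\infty(I)}\frac{||f(p)|-|g(p)||}{|g(p)|}$. Writing $|f(p)|-|g(p)|=\frac{|f(p)|^2-|g(p)|^2}{|f(p)|+|g(p)|}=\frac{(f_0-g_0)(p)}{|f(p)|+|g(p)|}$, this term becomes
$$\frac{\|f\|_{L^\infty(I)}}{|g(p)|(|f(p)|+|g(p)|)}\,|(f_0-g_0)(p)|.$$
The second term is bounded by $\frac{1}{|g(p)|}|(f_\xi-g_\xi)(p+\xi)|$. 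Hence everything reduces to the uniform estimate
$$\sup_{|\xi|\le r}\ \sup_{t\in\R}|(f_\xi-g_\xi)(t)| \le \sqrt2\, e^{\frac{\xi^2}{4\sigma^2}}C(\sigma,\beta)\,\bigl\||S_{\check{\varphi}}^{\mathbf A}f|^2-|S_{\check{\varphi}}^{\mathbf A}g|^2\bigr\|_{\frac{\beta}{2},\infty}.$$
Applying it at $\xi=0$ for the first term and at $|\xi|\le r$ for the second gives exactly $\zeta$.

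To prove the uniform estimate, we note that $f_\xi-g_\xi\in V^\infty_{\beta/2}(\varphi_\xi)$ by Lemma \ref{auxi.lem}, since the space is linear. Proposition \ref{prop.bio.expa} then gives
$$(f_\xi-g_\xi)(t)=\sum_n\langle f_\xi-g_\xi,T_{\frac\beta2 n}\varphi_\xi\rangle\, T_{\frac\beta2 n}\widetilde{\varphi_\xi}(t).$$
By Lemma \ref{lem4}, each coefficient equals a unimodular factor times the Fourier transform of $|S f(\frac{\beta n}{2},\cdot)|^2-|S g(\frac{\beta n}{2},\cdot)|^2$, evaluated at $\xi/b$. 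Its modulus is therefore at most the $L^1$ norm of this difference, which is at most the mixed norm $\|\cdot\|_{\frac\beta2,\infty}$.

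It remains to bound $\sum_n|T_{\frac\beta2 n}\widetilde{\varphi_\xi}(t)|$. By \eqref{dual.phi_w}, $|T_{\frac\beta2n}\widetilde{\varphi_\xi}(t)|=\sqrt2e^{\frac{\xi^2}{4\sigma^2}}|\mathcal F^{-1}\Lambda(t-\frac\xi2-\frac\beta2 n)|$. Taking the supremum over $t$ absorbs the shift by $\xi/2$, so the sum is at most $\sqrt2 e^{\frac{\xi^2}{4\sigma^2}}C(\sigma,\beta)$ by \eqref{Csigma.def}. Since $e^{\xi^2/(4\sigma^2)}\le e^{r^2/(4\sigma^2)}$ and equals $1$ at $\xi=0$, the claimed bound follows.

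The main technical point is justifying that the biorthogonal expansion converges pointwise and that the interchange of sup and sum is legitimate. Proposition \ref{prop.bio.expa} gives uniform convergence on compacts, and the absolute bound via $C(\sigma,\beta)$ is finite, so this step is routine. Bookkeeping of the phase factor in Lemma \ref{lem4} is harmless because only moduli are used.
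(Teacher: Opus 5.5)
Your proof is correct and follows essentially the same route as the paper. You use the same unimodular constant $\tau$ (the paper's $\tau_1$), the same splitting into an amplitude-mismatch term controlled by $(f_0-g_0)(p)$ and a term controlled by $f_\xi-g_\xi$, and the same uniform bound on $f_\xi-g_\xi$ obtained from the biorthogonal expansion, Lemma~\ref{lem4} and $C(\sigma,\beta)$. The only cosmetic difference is that you work with $\overline{f_\xi(p+\xi)}$ where the paper writes the identical quantity $f_{-\xi}(p)$.
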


\begin{proof}
By Lemma \ref{auxi.lem},  we have $f_\xi, g_\xi\in V_{\beta/2}^{\infty}(\varphi_\xi)$ for every $\xi\in\R$.
Applying the biorthogonal expansion together with Lemma \ref{lem4}, we obtain
\begin{align}\label{eq13}
    |f_\xi(t)-g_\xi(t)|  &=
    \left|   \sum_{n\in\Z}  \langle f_\xi,T_{\frac{\beta}{2}n}\varphi_\xi\rangle
    T_{\frac{\beta}{2}n}\widetilde{\varphi_\xi}(t)   -   \sum_{n\in\Z}   \langle g_\xi,T_{\frac{\beta}{2}n}\varphi_\xi\rangle T_{\frac{\beta}{2}n}\widetilde{\varphi_\xi}(t) \right| \notag\\
    &= \left|  \sum_{n\in\Z}  e^{\frac{\pi ia\beta n\xi}{b}}  \int_{\R}
    \Bigl(   |S_{\check{\varphi}}^{\mathbf A}f(\tfrac{\beta}{2}n,s)|^2 -   |S_{\check{\varphi}}^{\mathbf A}g(\tfrac{\beta}{2}n,s)|^2\Bigr) e^{-\frac{2\pi i\xi s}{b}}\,ds\, T_{\frac{\beta}{2}n}\widetilde{\varphi_\xi}(t)    \right| \notag\\
    &\le   \sum_{n\in\Z}  \Bigl\|
    |S_{\check{\varphi}}^{\mathbf A}f(\tfrac{\beta}{2}n,\cdot)|^2  -
    |S_{\check{\varphi}}^{\mathbf A}g(\tfrac{\beta}{2}n,\cdot)|^2  \Bigr\|_{L^1(\R)}  \,
   |T_{\frac{\beta}{2}n}\widetilde{\varphi_\xi}(t)| \notag\\
    &\le \sqrt{2}\,   e^{\frac{\xi^2}{4\sigma^2}}\,  C(\sigma,\beta)\,    \bigl\|
    |S_{\check{\varphi}}^{\mathbf A}f|^2  -  |S_{\check{\varphi}}^{\mathbf A}g|^2
    \bigr\|_{\frac{\beta}{2},\infty}, \ \  \xi,t\in\R.
\end{align}
Here the second identity follows from Lemma~\ref{lem4}, while the last inequality
uses Proposition~\ref{prop3} and the definition of $C(\sigma,\beta)$.

Setting $\xi=0$ in \eqref{eq13}, we obtain
\begin{equation}\label{abs.eq1}
    \bigl||f(t)|^2-|g(t)|^2\bigr|   \le  \sqrt{2}C(\sigma,\beta)  \bigl\|
    |S_{\check{\varphi}}^{\mathbf A}f|^2   -    |S_{\check{\varphi}}^{\mathbf A}g|^2
    \bigr\|_{\frac{\beta}{2},\infty},
\  t\in\R.
\end{equation}

Now define $
\tau_1:=\frac{\overline{g(p)}\,|f(p)|}{|g(p)|\,\overline{f(p)}}\in\mathbb T.$ 
For $\xi\in\R$ with $|\xi|\le r$, we have
\begin{align*}
   & |f(p+\xi)-\tau_1 g(p+\xi)| %  =   \left|  \frac{\overline{f(p)}}{|f(p)|}f(p+\xi)  -  \frac{\overline{g(p)}}{|g(p)|}g(p+\xi)   \right| 
    =   \left| \frac{1}{|f(p)|}f_{-\xi}(p)  -   \frac{1}{|g(p)|}g_{-\xi}(p)    \right| \\
    &\le  \left|    \frac{|g(p)|-|f(p)|}{|f(p)g(p)|}   \right|  |f_{-\xi}(p)|  +
    \frac{1}{|g(p)|}|f_{-\xi}(p)-g_{-\xi}(p)| \\
    &= \frac{1}{|g(p)|}  \left(   \frac{\bigl||g(p)|^2-|f(p)|^2\bigr|}{|f(p)|+|g(p)|}\,|f(p+\xi)|
    +   |f_{-\xi}(p)-g_{-\xi}(p)| \right).
\end{align*}
Using \eqref{eq13} and \eqref{abs.eq1}, together with $|\xi|\le r$, we conclude that
\begin{equation*} |f(p+\xi)-\tau_1 g(p+\xi)| \le\sqrt{2}\zeta C(\sigma,\beta)
\bigl\| |S_{\check{\varphi}}^{\mathbf A}f|^2 - |S_{\check{\varphi}}^{\mathbf A}g|^2\bigr\|_{\frac{\beta}{2},\infty},
\end{equation*} 
where
$\zeta=\frac{1}{|g(p)|}\left(\frac{\|f\|_{L^\infty(I)}}{|f(p)|+|g(p)|}+e^{\frac{r^2}{4\sigma^2}}\right)$.

Taking the supremum over $|\xi|\le r$ yields
%\begin{equation*}
%\|f-\tau_1 g\|_{L^\infty(I)} \le \sqrt{2}\zeta C(\sigma,\beta)
%\bigl\| |S_{\check{\varphi}}^{\mathbf A}f|^2- |S_{\check{\varphi}}^{\mathbf A}g|^2 \bigr\|_{\frac{\beta}{2},\infty},
%\end{equation*} 
%and therefore
\begin{equation*}
\min_{\tau \in \T}\|f-\tau g\|_{L^\infty(I)} \le
\|f-\tau_1 g\|_{L^\infty(I)}\le
 \sqrt{2}\zeta C(\sigma,\beta) \bigl\||S_{\check{\varphi}}^{\mathbf A}f|^2-
|S_{\check{\varphi}}^{\mathbf A}g|^2\bigr\|_{\frac{\beta}{2},\infty}.
\end{equation*} 
	Hence, we complete the proof.   
\end{proof}

Theorem~\ref{local.stable.thm1} shows that STLCT phase retrieval is stable on a single interval $
I=[p-r,p+r]$.  Meanwhile, the estimate \eqref{local.stable.eq1} makes clear the two sources governing the local stability constant, one is the the exponential factor $ e^{\frac{r^2}{4\sigma^2}}$  which grows with the radius $r$ of the interval, and the other is the reciprocal factors involving the anchor values $|f(p)|$ and $|g(p)|$.

For $
\mathbf{A}=
\begin{pmatrix}
0 & 1\\
-1 & 0
\end{pmatrix}$,
the STLCT reduces to the Gabor transform. In this case, Theorem \ref{local.stable.thm1} agrees with \cite[Lemma~3.8]{grohs2024stable}.

Under Condition~$(\mathbf P)$, the local stability estimate from Theorem~\ref{local.stable.thm1} can be applied at a family of anchor points where the signal is bounded away from zero. Let $ r:=\max_{1\le j\le J-1}(p_{j+1}-p_j)$ and define $ I_j:=[p_j-r,p_j+r],  1\le j\le J.$  Since the distance between two consecutive anchor points is at most $r$, these intervals overlap successively and cover the whole interval
$$ I=[p_1-r,p_J+r].$$

Theorem \ref{local.stable.thm1} provides a local stability estimate on each $I_j$.  The main issue is then to pass from these local estimates to a single stability bound on the whole interval $I$. To this end, we compare the phases at neighboring anchor points and propagate the phase information step by step from one anchor interval to the next. In this way, the global stability estimate is obtained by iterating the local estimate only across adjacent anchor pairs, rather than by applying it once on the entire interval. As a consequence, the exponential factor appearing in the final stability constant is determined by  the maximal spacing between consecutive anchor points
$$r:=\max_{1\le j\le J-1}(p_{j+1}-p_j),$$
 instead of by the total length of $I$. This is precisely the mechanism that prevents exponential deterioration of the stability constant with respect to the interval length. The resulting global stability bound is stated in the following theorem.

\begin{theorem}\label{stable.thm2}
Let $f$ be a Gaussian shift-invariant function in $V_\beta^\infty(\varphi)$, and 
$
p_1<\cdots<p_J\in\mathbb R$ 
be such that Condition $(\mathbf P)$ holds with $\gamma>0$. Set $
r:=\max_{1\le j\le J-1}(p_{j+1}-p_j)$ and $
I:=[p_1-r,p_J+r].$ 
Then, for every $g\in V_{\beta}^{\infty}(\varphi)$, one has
\begin{align}\label{stable.const}
\min_{\tau\in\mathbb T}\|f\hskip-.03in-\hskip-.03in\tau g\|_{L\hskip-.02in^\infty(\hskip-.01in I \hskip-.01in)}\hskip-.03in \le\hskip-.04in
\frac{16\sqrt{2}}{3} \hskip-.03in\left\lceil\hskip-.03in\frac{J}{2}\hskip-.03in\right\rceil \hskip-.03ine^{\frac{r^2}{4\sigma^2}}
\hskip-.03inC(\hskip-.03in\sigma,\hskip-.03in\beta)
\frac{\max\{1,\|f\|_{L\hskip-.02in^\infty(I)}\hskip-.03in+\hskip-.03in\|g\|_{L\hskip-.02in^\infty(I)}\}}{\min\{\gamma,\gamma^2\}}
\bigl\|\hskip-.02in|S_{\check{\varphi}}^{\mathbf A}f|\hskip-.01in^2\hskip-.04in-\hskip-.03in|S_{\check{\varphi}}^{\mathbf A}g|\hskip-.01in^2\hskip-.02in\bigr\|_{\frac{\beta}{2},\infty},
\end{align}
where $C(\sigma,\beta)$ is the constant defined in \eqref{Csigma.def}.
\end{theorem}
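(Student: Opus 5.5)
The plan is to reduce everything to the two pointwise bounds already obtained inside the proof of Theorem~\ref{local.stable.thm1}. Write $\varepsilon:=\bigl\||S_{\check{\varphi}}^{\mathbf A}f|^2-|S_{\check{\varphi}}^{\mathbf A}g|^2\bigr\|_{\frac{\beta}{2},\infty}$ and $M:=\sqrt2\,C(\sigma,\beta)\,\varepsilon$. By \eqref{eq13} and \eqref{abs.eq1}, for all $\xi,t\in\R$,
\[
|f_\xi(t)-g_\xi(t)|\le e^{\frac{\xi^2}{4\sigma^2}}M
\quad\text{and}\quad
\bigl||f(t)|^2-|g(t)|^2\bigr|\le M .
\]
I will split into two cases according to the size of $M$ compared with $\gamma^2$. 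The threshold is taken to be $M\ge \tfrac34\gamma^2$, so that $|g(p_j)|\ge\gamma/2$ in the complementary case.

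\emph{Large-noise case} ($M\ge\frac34\gamma^2$). The trivial bound
\[
\min_\tau\|f-\tau g\|_{L^\infty(I)}\le \|f\|_{L^\infty(I)}+\|g\|_{L^\infty(I)}
\]
is already dominated by $\frac{4}{3}\,\frac{M}{\gamma^2}\bigl(\|f\|_{L^\infty(I)}+\|g\|_{L^\infty(I)}\bigr)$. This is smaller than the right-hand side of \eqref{stable.const}, since $\lceil J/2\rceil\ge1$ and $e^{r^2/4\sigma^2}\ge1$.

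\emph{Small-noise case} ($M<\frac34\gamma^2$). Here $|g(p_j)|^2\ge |f(p_j)|^2-M\ge\gamma^2/4$, so $|g(p_j)|\ge\gamma/2$ for every $j$. Apply Theorem~\ref{local.stable.thm1} at each anchor $p_j$ with radius $r$, using the explicit phase $\tau_j$ from its proof. Since $|f(p_j)|+|g(p_j)|\ge\gamma$, this gives
\[
\|f-\tau_j g\|_{L^\infty(I_j)}\le \delta:=\frac{2}{\gamma}\Bigl(\frac{\|f\|_{L^\infty(I)}}{\gamma}+e^{\frac{r^2}{4\sigma^2}}\Bigr)M
\le \frac{4\,e^{\frac{r^2}{4\sigma^2}}\max\{1,\|f\|_{L^\infty(I)}\}}{\min\{\gamma,\gamma^2\}}\,M .
\]
Next compare neighbouring phases. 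Because $p_{j+1}\in I_j\cap I_{j+1}$, we get $|\tau_j-\tau_{j+1}|\,|g(p_{j+1})|\le 2\delta$, and hence $|\tau_j-\tau_{j+1}|\le 4\delta/\gamma$.

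Now fix a middle reference index $j_0=\lceil J/2\rceil$. Every $j$ is reached from $j_0$ in at most $\lceil J/2\rceil-1$ (or about that many) propagation steps, so telescoping gives $|\tau_j-\tau_{j_0}|\le (\lceil J/2\rceil-1)\,4\delta/\gamma$. Then on each $I_j$,
\[
\|f-\tau_{j_0}g\|_{L^\infty(I_j)}\le \delta+|\tau_j-\tau_{j_0}|\,\|g\|_{L^\infty(I)} .
\]
Since the $I_j$ cover $I$, this yields a bound of the form $\lceil J/2\rceil\,\delta\,(1+\|g\|/\gamma)$ up to constants. Absorbing the powers of $\gamma$ into $\min\{\gamma,\gamma^2\}$ and the norms into $\max\{1,\|f\|+\|g\|\}$ gives the stated shape.

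The main obstacle is the bookkeeping of constants rather than any conceptual step. Three things must be tuned together: the threshold ($\frac34$) between the two cases, the use of $|g(p_j)|\ge\gamma/2$ both in $\zeta$ and in the phase comparison, and the choice of the middle anchor. They must be chosen so that the two cases produce the single factor $\frac{16}{3}\lceil J/2\rceil$ and a single power $\min\{\gamma,\gamma^2\}$ in the denominator.

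A naive estimate gives extra factors such as $\|g\|/\gamma$ multiplying $\delta/\gamma$, which would produce a $\gamma^{-3}$ term. I expect to avoid this by bounding the phase error more carefully. The idea is to estimate $|\tau_j-\tau_{j+1}|$ through $|f(p_{j+1})|\ge\gamma$ instead of through $g$, using $|f(p_{j+1})|\,|1-\overline{\tau_j}\tau_{j+1}|\le$ a sum of two local errors. The propagated term then carries $\|g\|/\gamma$ only once, and I would first try to arrange the normalization this way. If the exact constant $\frac{16}{3}$ does not fall out directly, I would adjust the case threshold accordingly.
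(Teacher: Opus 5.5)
Your large-noise case is fine, and so is the set-up of the small-noise case: the threshold $M<\frac34\gamma^2$, the bound $|g(p_j)|\ge\gamma/2$, and the local bound $\delta$ on each $I_j$. The gap is in the phase-comparison step, and the remedy you sketch does not close it.

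\textbf{Where it fails.} You bound $|\tau_j-\tau_{j+1}|$ by a sum of the two local errors at $p_{j+1}$, each controlled by $\delta$. But $\delta$ contains the modulus-mismatch term $\frac{2\|f\|_{L^\infty(I)}}{\gamma^2}M$. So $|\tau_j-\tau_{j+1}|\lesssim \frac{\|f\|M}{\gamma^3}+\frac{e^{r^2/4\sigma^2}M}{\gamma^2}$, whether you divide by $|g(p_{j+1})|\ge\gamma/2$ or by $|f(p_{j+1})|\ge\gamma$. That choice only changes a factor $2$; the numerator is still $2\delta$. After multiplying by $\lceil J/2\rceil\|g\|_{L^\infty(I)}$, you get a term $\lceil J/2\rceil\,\|f\|\|g\|M/\gamma^3$. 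This is exactly the weaker Grohs--Liehr shape $\max\{\|f\|^2,\|f\|+\|g\|\}/\min\{\gamma,\gamma^3\}$. It cannot be absorbed into $\max\{1,\|f\|+\|g\|\}/\min\{\gamma,\gamma^2\}$: with $\|f\|=\|g\|=1$ and small anchor values $\gamma$, the ratio blows up like $1/\gamma$. So your final absorption step is false as stated, and retuning the $\frac34$ threshold cannot repair it.

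\textbf{The missing idea.} Do not compare neighbouring phases through the uniform local $L^\infty$ estimate. Compare them directly through the tensor values, which \eqref{eq13} controls with no modulus-mismatch term. Since $\tau_j=\frac{\overline{g(p_j)}}{|g(p_j)|}\frac{f(p_j)}{|f(p_j)|}$ and $\overline{f(p_j)}f(p_{j+1})=f_{p_j-p_{j+1}}(p_j)$, Lemma~\ref{prop1} gives
\[
|\tau_j-\tau_{j+1}|
=\left|\frac{\overline{f(p_j)}f(p_{j+1})}{|f(p_j)f(p_{j+1})|}-\frac{\overline{g(p_j)}g(p_{j+1})}{|g(p_j)g(p_{j+1})|}\right|
\le\frac{2\,|f_{p_j-p_{j+1}}(p_j)-g_{p_j-p_{j+1}}(p_j)|}{|f(p_j)f(p_{j+1})|}
\le\frac{2e^{\frac{r^2}{4\sigma^2}}M}{\gamma^2}.
\]
With this per-step bound, your own decomposition works. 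Here $s:=\lceil J/2\rceil$; note that up to $s$ steps (not $s-1$) are needed when $J$ is even. You get
\[
\|f-\tau_{j_0}g\|_{L^\infty(I_j)}\le\frac{2\|f\|_{L^\infty(I)}M}{\gamma^2}+\frac{2e^{\frac{r^2}{4\sigma^2}}M}{\gamma}+s\,\frac{2e^{\frac{r^2}{4\sigma^2}}\|g\|_{L^\infty(I)}M}{\gamma^2}
\le(2s+2)\,e^{\frac{r^2}{4\sigma^2}}M\,\frac{\max\{1,\|f\|_{L^\infty(I)}+\|g\|_{L^\infty(I)}\}}{\min\{\gamma,\gamma^2\}}.
\]
This is below the right-hand side of \eqref{stable.const}, because $2s+2\le\frac{16}{3}s$ for $s\ge1$.

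The paper uses the same mechanism, organised slightly differently. It writes $f(p_j+\xi)-\tau_s g(p_j+\xi)$ as the accumulated phase mismatch $\delta_{j,s}$ times $f(p_j+\xi)$, plus $|g(p_j)|^{-1}|f_{-\xi}(p_j)-g_{-\xi}(p_j)|$. It then telescopes $\delta_{j,s}$ with Lemma~\ref{prop1}, so the propagated error multiplies $\|f\|$ rather than $\|g\|$. Both routes give the stated constant.
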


%%%% ---------

\begin{remark}[Comparison with \cite{grohs2024stable}]
    When $\mathbf{A} = \begin{pmatrix} 0 & 1 \\ -1 & 0 \end{pmatrix}$, the STLCT reduces to the Gabor transform, and Theorem~\ref{stable.thm2} recovers \cite[Theorem 3.10]{grohs2024stable}.
    Note that our stability constant in \eqref{stable.const} depends linearly on ${\rm max}\{1,\, \|f\|_{L^{\infty} (I)} + \|g\|_{L^{\infty} (I)}\}$ and the reciprocal of $\min \{\gamma, \gamma^2\}$, whereas the stability constant in \cite[Eq.(27)]{grohs2024stable} depends linearly on ${\rm max}\{\|f\|_{L^{\infty}(I)}^2,\, \|f\|_{L^{\infty} (I)} + \|g\|_{L^{\infty} (I)}\}$ and the reciprocal of $\min \{\gamma, \gamma^3\}$. Especially when $\|f\|_{L^{\infty} (I)}$ is large or $\gamma$ is small, our result substantially improves the one in \cite{grohs2024stable}.
\end{remark}

% \begin{remark}
%     For the case $\textbf{A} = 
% 	\begin{pmatrix}
% 		a & b \\
% 		c & d
% 	\end{pmatrix}
% 	$ with $a=d=0,b=-c=1$, the similar result has been shown in \cite[Theorem 3.10]{grohs2024stable}. Note that our stability constant in \eqref{stable.const} depends  linearly on ${\rm max}\{1,\|f\|_{L^{\infty}(I)},  \|f\|_{L^{\infty} (I)} + \|g\|_{L^{\infty} (I)}\}$ and  the reciprocal of $\min \{\gamma, \gamma^2\}$, whereas the stability constant in \cite[Eq.(27)]{grohs2024stable} depends linearly on ${\rm max}\{\|f\|_{L^{\infty}(I)}^2,  \|f\|_{L^{\infty} (I)} + \|g\|_{L^{\infty} (I)}\}$ and  the reciprocal of $\min \{\gamma, \gamma^3\}$. Especially when $\|f\|_{L^{\infty} (I)}$ is large or $\gamma$ is small, our result substantially improves the one in \cite{grohs2024stable}. 
% \end{remark}

For the proof of Theorem \ref{stable.thm2}, we require the following technical   lemma. 

\begin{lemma}\label{prop1}
    Let $z_1, z_2 \in \C$ with $|z_1|\ne 0$ and $|z_2|\ne 0$. Then
    \begin{align*}
        \left| \frac{z_1}{|z_1|} - \frac{z_2}{|z_2|} \right| \le 2\frac{|z_1 - z_2|}{|z_1|}.
    \end{align*}
\end{lemma}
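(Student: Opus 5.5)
The plan is to insert and subtract the intermediate term $\frac{z_2}{|z_1|}$, so that the difference splits into a piece coming from $z_1-z_2$ and a piece coming from the mismatch of the normalising factors:
\begin{align*}
\frac{z_1}{|z_1|}-\frac{z_2}{|z_2|}
=\frac{z_1-z_2}{|z_1|}+z_2\Bigl(\frac{1}{|z_1|}-\frac{1}{|z_2|}\Bigr).
\end{align*}
The denominator $|z_1|$ is chosen on purpose, because the right-hand side of the claimed bound has $|z_1|$ in the denominator.

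Next I take absolute values and apply the triangle inequality. The first term contributes exactly $\frac{|z_1-z_2|}{|z_1|}$.

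For the second term, I rewrite it as
\begin{align*}
|z_2|\,\frac{\bigl||z_2|-|z_1|\bigr|}{|z_1||z_2|}=\frac{\bigl||z_2|-|z_1|\bigr|}{|z_1|}.
\end{align*}
The reverse triangle inequality $\bigl||z_2|-|z_1|\bigr|\le |z_1-z_2|$ then bounds this by $\frac{|z_1-z_2|}{|z_1|}$ as well. Adding the two contributions gives the factor $2$.

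There is no real obstacle. The only point requiring care is the choice of intermediate term: it must be normalised by $|z_1|$, not $|z_2|$, so that the final denominator is $|z_1|$ as stated. The hypothesis $z_1,z_2\neq 0$ is only needed to make the expressions well defined.
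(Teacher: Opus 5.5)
Your proof is correct and is essentially the paper's argument. The paper inserts $|z_1|\frac{z_2}{|z_2|}$ into $z_1-z_2$ and applies the reverse triangle inequality to get $|z_1-z_2|\ge |z_1|\bigl|\frac{z_1}{|z_1|}-\frac{z_2}{|z_2|}\bigr|-\bigl||z_1|-|z_2|\bigr|$, which after dividing by $|z_1|$ is exactly your decomposition; it then bounds $\bigl||z_1|-|z_2|\bigr|\le|z_1-z_2|$ just as you do.
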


The proof of the above lemma is given in Appendix \ref{proof.prop1}.

%\begin{lemma}\label{lem1}
%	Let $f, g \in V_{\beta}^{\infty}(\varphi), r > 0$ and $p_j, p_m \in \R$ be chosen  that
%	\begin{align*}
%		f(p_j), g(p_j), f(p_m), g(p_m) \neq 0.
%	\end{align*}
% If   $|p_m - p_j| \le r$,  then 
% \begin{align*}
% & \left| \frac{\overline{f(p_m)}}{|f(p_m)|} - \frac{\overline{f(p_j)}}%{\overline{g(p_j)}} \frac{\overline{g(p_m)}}{|g(p_m)|} \right|  \\
% \le &   \sqrt{2} C(\sigma,\beta) \| |S_{\check{\varphi}}^{\mathbf{A}} f|^2  -  |S_{\check{\varphi}}^{\mathbf{A}} g|^2 \|_{\frac{\beta}{2},\infty}  \left(  \frac{1}{|g(p_j)| (|f(p_j)|  +  |g(p_j)|)}  +  \frac{2e^{\frac{r^2}{4\sigma^2}}}{|f(p_m)g(p_j)|}  \right). 
%	\end{align*}
	%\begin{align*}
% \left| \frac{\overline{f(p_m)}}{|f(p_m)|}\hskip-.03in - \hskip-.03in\frac{\overline{f(p_j)}}{\overline{g(p_j)}} \frac{\overline{g(p_m)}}{|g(p_m)|} \right|  \hskip-.03in  \le \hskip-.03in  \sqrt{2} C(\sigma,\beta) \| |S_{\check{\varphi}}f|^2 \hskip-.03in - \hskip-.03in |S_{\check{\varphi}}g|^2 \|_{\frac{\beta}{2},\infty} \hskip-.06in \left( \hskip-.08in \frac{1}{|g(p_j)| (|f(p_j)| \hskip-.04in + \hskip-.04in |g(p_j)|)} \hskip-.03in + \hskip-.03in \frac{2e^{\frac{r^2}{4\sigma^2}}}{|f(p_m)g(p_j)|} \hskip-.08in \right). 
%	\end{align*}
%    where
%    \begin{align*}
%		B = \sqrt{2}b C(\sigma,\beta) \| |S_{\check{\varphi}}f|^2 - |S_{\check{\varphi}}g|^2 \|_{\frac{\beta}{2},\infty}. 
%    \end{align*}
%\end{lemma}

%The proof of the above theorem is given in Section \ref{proof.lem1}.

\smallskip

We now proceed to prove Theorem \ref{stable.thm2}, which gives the phase retrieval stability on the interval $I=\bigcup_{j=1}^J I_j $ under Condition $(\mathbf P)$.

\smallskip

\begin{proof}[Proof of Theorem~\ref{stable.thm2}]
Set
$\Delta := \bigl\| |S_{\check{\varphi}}^{\mathbf A}f|^2 - |S_{\check{\varphi}}^{\mathbf A}g|^2\bigr\|_{\frac{\beta}{2},\infty} $ 
and
 $B:=\sqrt{2}C(\sigma,\beta)\Delta.$ 

By \eqref{abs.eq1}, we have
\begin{equation}\label{stable.thm.proof.eq0}
\bigl\||f|^2-|g|^2\bigr\|_{L^\infty(\R)} \le B.
\end{equation}

We will do the proof by cases.
 
\smallskip
\noindent
\textbf{Case 1.} Assume that $\Delta\le \frac{\gamma^2}{2\sqrt{2}C(\sigma,\beta)}$, or equivalently,
$ B\le \frac{\gamma^2}{2}$. 

Then, for every $j\in\{1,\dots,J\}$,
$$\bigl||f(p_j)|-|g(p_j)|\bigr| \le \frac{\bigl||f(p_j)|^2-|g(p_j)|^2\bigr|}{|f(p_j)|} \le
\frac{B}{|f(p_j)|} \le \frac{\gamma}{2},$$
where we used \eqref{stable.thm.proof.eq0} and Condition~$(\mathbf P)$. Since
$|f(p_j)|\ge \gamma$, it follows that
$$ |g(p_j)|\ge \frac{\gamma}{2}, \ 1\le j\le J. $$

For $1\le j\le J$, define $ I_j:=[p_j-r,p_j+r].$ Since $|p_{j+1}-p_j|\le r$ for all $j$, these intervals cover $ I=[p_1-r,p_J+r]=\bigcup_{j=1}^J I_j.$

Let
$$ s:=\left\lceil \frac{J}{2}\right\rceil
 \ \ \text{and} \ \
\tau_s:=\frac{\overline{g(p_s)}\,|f(p_s)|}{|g(p_s)|\,\overline{f(p_s)}}\in\mathbb T. $$
Then
$$ \|f-\tau_s g\|_{L^\infty(I)} = \max_{1\le j\le J}\|f-\tau_s g\|_{L^\infty(I_j)}.$$

We now estimate $\|f-\tau_s g\|_{L^\infty(I_j)}$ for each $j$.

\medskip
\noindent
\emph{Step 1: The middle interval \(I_j\) with $j=s$.} 
By Theorem~\ref{local.stable.thm1}, we have 
 \begin{align}\label{stable.thm.eq.middle}
        \| f - \tau_s g \|_{L^{\infty} (I_j)} \le \frac{B}{|g(p_j)|} \left( e^{\frac{r^2}{4\sigma^2}} + \frac{\|f\|_{L^\infty(I_j)}}{|f(p_j)| + |g(p_j)|} \right)  \le \frac{2B}{\gamma} \left( e^{\frac{r^2}{4\sigma^2}} + \frac{2 \|f\|_{L^\infty(I_j)}}{3\gamma} \right), 
    \end{align}
since $|f(p_j)|\ge \gamma$ and $|g(p_j)|\ge \gamma/2$.

\medskip
\noindent
\emph{Step 2: The intervals \(I_j\) with \(j<s\).}
Fix $j<s$ and $\xi\in\R$ with $|\xi|\le r$. Then
 \begin{align}\label{stable.thm.eq.left1}
  &    |f(p_j + \xi) - \tau_s g(p_j + \xi)|% = \left|f(p_j + \xi) - \frac{\overline{g(p_s)} |f(p_s)|}{|g(p_s)| \overline{f(p_s)}} g(p_j + \xi) \right| \notag\\
       =   \left|\frac{\overline{f(p_s)}}{|f(p_s)|} f(p_j + \xi) \hskip-.03in - \hskip-.03in \frac{\overline{g(p_s)}}{|g(p_s)|} g(p_j + \xi)\right| \hskip-.05in \notag\\
%= \hskip-.05in \left|\frac{\overline{f(p_s)} }{|f(p_s)| } f(p_j + \xi) \hskip-.03in - \hskip-.03in \frac{\overline{g(p_s)} g(p_j) \overline{g(p_j)}}{|g(p_s)| |g(p_j)|^2} g(p_j + \xi) \right| \notag\\
     &    \le  \left| \frac{\overline{f(p_s)} }{|f(p_s)|} -   \frac{\overline{f(p_j)}}{\overline{g(p_j)}} \frac{\overline{g(p_s)}}{|g(p_s)|} \right|  |f(p_j  + \xi)|   %+ \hskip-.04in \left|\frac{\overline{g(p_s)} g(p_j) \overline{f(p_j)}}{|g(p_s)| |g(p_j)|^2} f(p_j \hskip-.04in + \hskip-.04in \xi) \hskip-.04in - \hskip-.04in \frac{\overline{g(p_s)} g(p_j) \overline{g(p_j)}}{|g(p_s)| |g(p_j)|^2} g(p_j \hskip-.04in + \hskip-.04in \xi) \right| \notag\\
    %   \le {}&  \left| \frac{\overline{f(p_s)}}{|f(p_s)|} - \frac{\overline{f(p_j)}}{\overline{g(p_j)}} \frac{\overline{g(p_s)}}{|g(p_s)|} \right| \|f\|_{L^{\infty}(I_j)}  
     + \left| \frac{\overline{g(p_s)} g(p_j)}{|g(p_s)| |g(p_j)|^2} \right| |f_ {-\xi}(p_j) - g_{-\xi}(p_j )| 
      \notag  \\
 &     \le  \left| \frac{\overline{f(p_s)}}{|f(p_s)|} - \frac{\overline{f(p_j)}}{\overline{g(p_j)}} \frac{\overline{g(p_s)}}{|g(p_s)|} \right| \|f\|_{L^{\infty}(I_j)} +\frac{2}{\gamma} e^{\frac{r^2}{4\sigma^2}}B, 
%        {}& \le \left| \frac{\overline{f(p_s)}}{|f(p_s)|} - \frac{\overline{f(p_j)}}{\overline{g(p_j)}} \frac{\overline{g(p_s)}}{|g(p_s)|} \right| \|f\|_{L^{\infty}(I_j)} + \frac{1}{|g(p_j)|} e^{\frac{r^2}{4\sigma^2}}B (\text{ by (\ref{eq13})}).
    \end{align}
where the last inequality follows from \eqref{eq13} and 
$|g(p_j)|\ge \gamma/2$.

It remains to estimate the phase mismatch
$$\delta_{j,s} := \left| \frac{\overline{f(p_s)}}{|f(p_s)|} - \frac{\overline{f(p_j)}}{\overline{g(p_j)}} \frac{\overline{g(p_s)}}{|g(p_s)|} \right|.$$
We propagate the phase from $p_s$ to $p_j$ along adjacent anchor points. Inserting
$p_{s-1}$ and applying the triangle inequality, we obtain
\begin{align}\label{phase.pro.proof.eq1}
    \delta_{j,s} &\le
     \left|  \frac{\overline{f(p_s)} f(p_{s-1}) }{|f(p_s) f(p_{s-1})|} \frac{\overline{f(p_{s-1})}}{|f(p_{s-1})|} - \frac{\overline{g(p_s)} g(p_{s-1}) }{|g(p_s) g(p_{s-1})|}  \frac{\overline{f(p_{s-1})}}{|f(p_{s-1})|}  \right|  \notag \\
     &+   \left|  \frac{\overline{g(p_s)} g(p_{s-1}) }{|g(p_s) g(p_{s-1})|} \frac{\overline{f(p_{s-1})}}{|f(p_{s-1})|}   -   \frac{\overline{g(p_s)} g(p_{s-1}) }{|g(p_s) g(p_{s-1})|} \frac{\overline{f(p_j)}}{\overline{g(p_j)}} \frac{\overline{g(p_{s-1})}}{|g(p_{s-1})|} \right| \notag\\
     &\le \left| \frac{\overline{f(p_s)}f(p_{s-1})}{|f(p_s)f(p_{s-1})|}
- \frac{\overline{g(p_s)}g(p_{s-1})}{|g(p_s)g(p_{s-1})|} \right|
+ \left| \frac{\overline{f(p_{s-1})}}{|f(p_{s-1})|}
- \frac{\overline{f(p_j)}}{\overline{g(p_j)}}\frac{\overline{g(p_{s-1})}}{|g(p_{s-1})|}\right|.
\end{align}
By Lemma~\ref{prop1}, we have 
\begin{align}\label{phase.pro.proof.eq2}
\left| \frac{\overline{f(p_s)}f(p_{s-1})}{|f(p_s)f(p_{s-1})|} - \frac{\overline{g(p_s)}g(p_{s-1})}{|g(p_s)g(p_{s-1})|}\right|
&\le\frac{2}{|f(p_s)f(p_{s-1})|}\left|\overline{f(p_s)}f(p_{s-1}) - \overline{g(p_s)}g(p_{s-1}) \right| \notag \\
\hskip-1.5in&\hskip-1.5in= \frac{2}{|f(p_s)f(p_{s-1})|} |f_{p_s-p_{s-1}}(p_s)-g_{p_s-p_{s-1}}(p_s)| \le \frac{2Be^{\frac{r^2}{4\sigma^2}}}{|f(p_s)f(p_{s-1})|},
\end{align}
where we used \eqref{eq13} and the fact that $|p_s-p_{s-1}|\le r$. 

Combine \eqref{phase.pro.proof.eq1} and \eqref{phase.pro.proof.eq2}, we have 
$$\delta_{j,s} \le \frac{2Be^{\frac{r^2}{4\sigma^2}}}{|f(p_s)f(p_{s-1})|} +
\delta_{j,s-1}.$$ 
Iterating  along the chain  $p_s,p_{s-1},\dots,p_j$, 
we obtain
\begin{align}\label{stable.thm.eq.left2}
\delta_{j,s}\le \sum_{k=j+1}^s \frac{2Be^{\frac{r^2}{4\sigma^2}}}{|f(p_k)f(p_{k-1})|} +\delta_{j ,j}&\le (s-j)\frac{2Be^{\frac{r^2}{4\sigma^2}}}{\gamma^2}   
+\frac{\bigl||f(p_j)|^2-|g(p_j)|^2\bigr|}{|g(p_j)|\bigl(|f(p_j)|+|g(p_j)|\bigr)} \notag\\
& \le (s-j)\frac{2Be^{\frac{r^2}{4\sigma^2}}}{\gamma^2}
+ \frac{4B}{3\gamma^2}.
\end{align}
Substituting \eqref{stable.thm.eq.left2} into \eqref{stable.thm.eq.left1} yields
\begin{align*}
|f(p_j+\xi)-\tau_s g(p_j+\xi)| &\le \left( (s-j)\frac{2Be^{\frac{r^2}{4\sigma^2}}}{\gamma^2} +\frac{4B}{3\gamma^2}\right)\|f\|_{L^\infty(I_j)}
+\frac{2Be^{\frac{r^2}{4\sigma^2}}}{\gamma}. 
\end{align*}
Taking the supremum over $|\xi|\le r$ gives
\begin{align}\label{stable.thm.eq.left3}
    \|f-\tau_s g\|_{L^\infty(I_j)} \le \left( \frac{2sBe^{\frac{r^2}{4\sigma^2}}}{\gamma^2}
+ \frac{4B}{3\gamma^2} \right) \|f\|_{L^\infty(I_j)} + \frac{2Be^{\frac{r^2}{4\sigma^2}}}{\gamma}, \  j<s.
\end{align}

\medskip
\noindent
\emph{Step 3: The intervals \(I_j\) with \(j>s\).}
Following the same argument in the \emph{Step 2}, propagating the phase from $p_s$ to the right, we obtain
\begin{align}\label{stable.thm.eq.right}
\|f-\tau_s g\|_{L^\infty(I_j)} \le \left( (j-s)\frac{2Be^{\frac{r^2}{4\sigma^2}}}{\gamma^2}
+ \frac{4B}{3\gamma^2} \right) \|f\|_{L^\infty(I_j)} + \frac{2Be^{\frac{r^2}{4\sigma^2}}}{\gamma}, \  j>s.
\end{align}

Combining \eqref{stable.thm.eq.middle}, \eqref{stable.thm.eq.left3}, and
\eqref{stable.thm.eq.right}, and using $|j-s|\le s$, we arrive at
\begin{equation*}
\|f-\tau_s g\|_{L^\infty(I_j)} \le \left( \frac{2sBe^{\frac{r^2}{4\sigma^2}}}{\gamma^2}
+ \frac{4B}{3\gamma^2} \right)\|f\|_{L^\infty(I_j)} + \frac{2Be^{\frac{r^2}{4\sigma^2}}}{\gamma},
\ 1\le j\le J.
\end{equation*}
Hence,
\begin{equation}\label{stable.thm.eq.case1}
\min_{\tau\in\mathbb T}\|f-\tau g\|_{L^\infty(I)} \le \|f-\tau_s g\|_{L^\infty(I)} \le \left( \frac{2sBe^{\frac{r^2}{4\sigma^2}}}{\gamma^2} +
\frac{4B}{3\gamma^2} \right)\|f\|_{L^\infty(I)} + \frac{2Be^{\frac{r^2}{4\sigma^2}}}{\gamma}.
\end{equation}

\smallskip
\noindent
\textbf{Case 2.} Assume that $
\Delta>\frac{\gamma^2}{2\sqrt{2}\,C(\sigma,\beta)}$,
or equivalently, $
B>\frac{\gamma^2}{2}$. 
Then, for every $\tau\in\mathbb T$,
$$ \|f-\tau g\|_{L^\infty(I)} \le \|f\|_{L^\infty(I)}+\|g\|_{L^\infty(I)}
< \frac{2B}{\gamma^2} \bigl(\|f\|_{L^\infty(I)}+\|g\|_{L^\infty(I)}\bigr).$$
Therefore,
\begin{equation}\label{stable.thm.eq.case2}
\min_{\tau\in\mathbb T}\|f-\tau g\|_{L^\infty(I)} \le \frac{2B}{\gamma^2} \bigl(\|f\|_{L^\infty(I)}+\|g\|_{L^\infty(I)}\bigr).
\end{equation}

Finally, combining \eqref{stable.thm.eq.case1} and \eqref{stable.thm.eq.case2}, and using 
$ s=\left\lceil \frac{J}{2}\right\rceil$, 
we obtain
\begin{align*}
&\min_{\tau\in\mathbb T}\|f-\tau g\|_{L^\infty(I)} \le \frac{16}{3}se^{\frac{r^2}{4\sigma^2}}
B\frac{\max\{1,\|f\|_{L^\infty(I)}+\|g\|_{L^\infty(I)}\}}{\min\{\gamma,\gamma^2\}} \\
&=\frac{16\sqrt{2}}{3}\left\lceil \frac{J}{2}\right\rceil e^{\frac{r^2}{4\sigma^2}} C(\sigma,\beta)
\frac{\max\{1,\|f\|_{L^\infty(I)}+\|g\|_{L^\infty(I)}\}}{\min\{\gamma,\gamma^2\}}\bigl\| |S_{\check{\varphi}}^{\mathbf A}f|^2 - |S_{\check{\varphi}}^{\mathbf A}g|^2\bigr\|_{\frac{\beta}{2},\infty}.
\end{align*}
Hence, we complete the proof.
\end{proof}

We note that,  in the proof of the above theorem, the reference anchor point is chosen as the middle point $p_s$. More generally, any point in $\{p_1,\dots,p_J\}$ may be used as the initial anchor point. However, the resulting stability estimate  will in general have a larger constant,  due to the increased number of phase propagation steps.

\medskip 

\section{Robust reconstruction from noisy discrete phaseless STLCT samples for Gaussian shift-invariant signals}\label{robust.sec}

\medskip 

Having established uniqueness and stability for STLCT phase retrieval in $V_\beta^\infty(\varphi)$, we now address the problem of practical reconstruction from finite noisy data. In this section, we develop an explicit reconstruction algorithm for recovering $f\in V_\beta^\infty(\varphi)$ from noisy phaseless STLCT samples on a finite lattice,  and we derive quantitative bounds for the reconstruction.

In Theorem \ref{pr.gassian.thm1}, we know that the auxiliary function $f_\xi$ admits the representation
\begin{equation}\label{eq:continuous-fw}
f_\xi(t) = \sum_{n\in\Z} e^{\frac{\pi ia\beta n\xi}{b}}
\left( \int_{\R} M_f\!\left(\frac{\beta n}{2},u\right)e^{-\frac{2\pi i\xi u}{b}}\,du
\right) T_{\frac{\beta}{2}n}\widetilde{\varphi_\xi}(t), \  \xi\in\R,
\end{equation} 
where 
 \begin{equation}\label{modulous.def.Mf}
M_f(x,t):=\bigl|S_{\check{\varphi}}^{\mathbf A}f(x,t)\bigr|^2,  \ (x,t)\in\R^2
\end{equation}  
are the  STLCT magnitudes.  Based on a discretization of \eqref{eq:continuous-fw} and a phase propagation procedure across adjacent anchor points, we formulate an explicit reconstruction algorithm, see Algorithm \ref{alg:stlct}.

To establish robustness of the reconstruction algorithm, we first analyze the local reconstruction error. In Theorem \ref{local-discrete-error.thm}, we derive an explicit upper bound in which the total error is decomposed into truncation of the spatial sum, frequency discretization by the trapezoidal rule, and measurement noise.   A key ingredient is that the STLCT magnitude $|S_{\check{\varphi}}^{\mathbf A}f(x,\cdot)|^2$ extends to an entire function, see Lemma \ref{lem:entire-stlct-magnitude}. This analytic structure permits the use of the trapezoidal rule with exponential convergence in the frequency variable, see Lemma \ref{lem:trapezoidal}.

Finally, combining the local error estimate with the phase propagation argument, we prove a quantitative robustness result for Algorithm \ref{alg:stlct}, see Theorem \ref{alg-stability.thm}. More precisely, for any prescribed tolerance $\varepsilon>0$, we derive explicit conditions on the sampling parameters and the noise level under which the global reconstruction error is bounded by $\varepsilon$ times the conditioning factor
\[ \kappa_{f,\gamma} := \frac{\max\{1,\|f\|_{L^\infty(I)}\}}{\min\{\gamma,\gamma^2\}},\]
which reflects the signal amplitude on the reconstruction interval together with the lower bound at the anchor points.

\subsection{The Reconstruction Algorithm} 
% From Theorem \ref{pr.gassian.thm1}, we know that the auxiliary function $f_\xi$ in \eqref{def.auxic} admits the representation
% \begin{equation}\label{eq:continuous-fw}
% f_\xi(t) = \sum_{n\in\Z} e^{\frac{\pi ia\beta n\xi}{b}}
% \left( \int_{\R} M_f\!\left(\frac{\beta n}{2},u\right)e^{-\frac{2\pi i\xi u}{b}}\,du
% \right) T_{\frac{\beta}{2}n}\widetilde{\varphi_\xi}(t), \  \xi\in\R,
% \end{equation} 
% where 
%  \begin{equation}\label{modulous.def.Mf}
% M_f(x,t):=\bigl|S_{\check{\varphi}}^{\mathbf A}f(x,t)\bigr|^2,  \ (x,t)\in\R^2
% \end{equation} 
% are the  STLCT magnitudes. 
In this section, we develop an explicit reconstruction procedure. The main idea is to discretize the integral term in \eqref{eq:continuous-fw}, use the resulting approximation to reconstruct the signal locally near suitable anchor points, and then propagate the phase information across adjacent anchor intervals to obtain a global reconstruction.

% The reconstruction is based on the continuous identity
% \begin{equation}\label{eq:continuous-fw}
% f_\xi(t) = \sum_{n\in\Z} e^{\frac{\pi ia\beta n\xi}{b}}
% \left( \int_{\R} M_f\!\left(\frac{\beta n}{2},u\right)e^{-\frac{2\pi i\xi u}{b}}\,du
% \right) T_{\frac{\beta}{2}n}\widetilde{\varphi_\xi}(t), \  t,\xi\in\R,
% \end{equation}
% which follows from Theorem \ref{pr.gassian.thm1}. The idea is to discretize \eqref{eq:continuous-fw}, then recover local reconstructions near the anchor points, and finally propagate the phase information across adjacent anchor intervals.

Let 
$$X_{N,H,h}:=\frac{\beta}{2}\{-N,\dots,N\}\times h\{-H,\dots,H\}\subset\R^2$$ 
be the discrete sampling lattice, where $N, H\in \N$ and $h>0$ is the frequency sampling step.  
The noisy phaseless STLCT data are given by
\begin{equation}\label{noisy.data.def}
Y_{n,k} = M_f\!\left(\frac{\beta n}{2},hk\right)+\eta_{n,k},  \ (n,k)\in\{-N,\dots,N\}\times\{-H,\dots,H\},
\end{equation}
where $\eta=(\eta_{n,k})\in\R^{(2N+1)\times(2H+1)}$ is a noise matrix satisfying
$ \|\eta\|_\infty\le \delta$. 
Here, for a matrix $B=(B_{n,k})$, we write $ \|B\|_\infty:=\max_n\sum_k |B_{n,k}|$.

Let $f\in V_\beta^\infty(\varphi)$ satisfy Condition $(\mathbf P)$ with anchor points
$ -s=p_1<\cdots<p_J=s$   and lower bound $\gamma>0$, and set $I:=[-s,s]$.

To identify suitable anchor points from the data, we first introduce the anchor detector
\begin{equation}\label{eq:anchor-detector}
\mathcal A(t) := h\sum_{n=-N}^{N}\sum_{k=-H}^{H} Y_{n,k}\,T_{\frac{\beta}{2}n}\widetilde{\varphi_0}(t), \ t\in[-s,s], 
\end{equation}
which is the discrete analogue of the reconstruction formula for $|f(t)|^2$.

Once the anchor points
$$p_1<\cdots<p_J$$
have been selected, we evaluate $\mathcal A$ at these points and define the discrete anchor magnitudes
\begin{equation}\label{eq:Aj-def}
A_j := h\sum_{n=-N}^{N}\sum_{k=-H}^{H}Y_{n,k}\,T_{\frac{\beta}{2}n}\widetilde{\varphi_0}(p_j)
\end{equation}
for $1\le j\le J$.

Set  $$r:=\max_{1\le j\le J-1}(p_{j+1}-p_j).$$  
For each anchor point $p_j$ and each displacement $|\xi|\le r$, we define the local reconstruction function
\begin{equation}\label{eq:Rj-def}
R_j(\xi) := \frac{h}{\sqrt{A_j}} \sum_{n=-N}^{N} e^{-\frac{\pi ia\beta n\xi}{b}}
\sum_{k=-H}^{H}Y_{n,k}\,e^{\frac{2\pi i\xi hk}{b}}T_{\frac{\beta}{2}n}\widetilde{\varphi_\xi}(p_j+\xi), \  |\xi|\le r.\end{equation} 
Comparing \eqref{eq:Rj-def} with the continuous representation \eqref{eq:continuous-fw}, we see that $\sqrt{A_j}\,\overline{R_j(\xi)}$
serves as a discrete approximation of $f_\xi(p_j+\xi)$. Thus $R_j$ provides a local reconstruction of the signal near the anchor point $p_j$.

Finally, for $j=1,\dots,J-1$, we define the phase transition factors
\begin{equation}\label{eq:rhoj-def}
\rho_j := \frac{R_j(p_{j+1}-p_j)}{|R_j(p_{j+1}-p_j)|}.
\end{equation}
The role of $\rho_j$ is to approximate the relative phase between the anchor values at $p_j$ and $p_{j+1}$, and therefore allow the phase propagation along the chain of anchor points. 

Based on the quantities introduced above, we now formulate the reconstruction algorithm for recovering the signal from discrete phaseless STLCT measurements, see Algorithm \ref{alg:stlct}.

\begin{algorithm}[h!]
\SetAlgoLined
\caption{Reconstruction from noisy phaseless STLCT samples}
\label{alg:stlct}
\textbf{Input:} The noisy data $Y=(Y_{n,k})$, parameters $N,H,h$, thresholds $r,\widetilde\gamma>0$, and $s>0$.   \\

\textbf{\bf Procedures:}
\begin{itemize}
\item[] {\bf Step 1}: Compute the anchor detector
\begin{equation}\label{mathcalA.alg.eq1} 
\mathcal A(t) = h\sum_{n=-N}^{N}\sum_{k=-H}^{H} Y_{n,k}\,T_{\frac{\beta}{2}n}\widetilde{\varphi_0}(t), \ t\in[-s,s].
\end{equation}

\item[] {\bf Step 2}: Select anchor points 
$$ -s=p_1<\cdots<p_J=s $$ 
such that
$$\mathcal A(p_j)\ge \widetilde\gamma, \  \ 
p_{j+1}-p_j\le  r,$$
and, if $J\ge 3$,
$$p_{j+2}-p_j\ge  r. $$

\item[] {\bf Step 3}: For each $j=1,\dots,J$, compute $A_j$ and $R_j$ by \eqref{eq:Aj-def} and \eqref{eq:Rj-def}.

 \qquad   \    For each $j=1,\dots,J-1$, compute $\rho_j$ by \eqref{eq:rhoj-def}.

\item[] {\bf Step 4}: Set $
\mu:=\left\lceil \frac{J}{2}\right\rceil$. Define the reconstruction $\mathcal R$ by
\begin{equation}\label{rec.alg.eq}
 \mathcal R(t):=
\begin{cases}
\overline{\rho_j\cdots\rho_{\mu-1}}\,R_j(\xi),
& t=p_j+\xi\in[p_j,p_{j+1}),\quad 1\le j\le \mu-1,\\[0.4em]
R_\mu(\xi),
& t=p_\mu+\xi\in[p_\mu,p_{\mu+1}],\\[0.4em]
\rho_\mu\cdots\rho_{j-1}\,R_j(\xi),
& t=p_j+\xi\in(p_j,p_{j+1}],\quad \mu+1\le j\le J-1.
\end{cases}   
\end{equation}

\end{itemize}

\textbf{Output:} A reconstruction $\mathcal R:[-s,s]\to \C$.
\end{algorithm}

The lower bound $\mathcal A(p_j)\ge \widetilde\gamma$ in Step 2  guarantees that each anchor point carries sufficiently large amplitude, while the spacing constraints  in Step 2 ensure that the detected anchor points are neither too separated nor too dense.  The condition $
p_{j+1}-p_j\le  r$ ensures that consecutive anchor points are close enough for stable local reconstruction and phase propagation, whereas $p_{j+2}-p_j\ge  r$ yields a bound on the total number of anchor intervals, which is crucial in the proof of robustness when passing from local discretization errors to a global reconstruction bound.  Without loss of generality, we may assume that $ r\le 2 s$. Indeed, if $r>2s$, then at most two anchor points can be selected in the interval $[-s,s]$. Otherwise, if three anchor points $p_1<p_2<p_3$ are selected, then $p_3-p_1\le 2s<r$, which contradicts the spacing condition $p_{j+2}-p_j\ge r$.

Note that the exact constant $\gamma$ in Condition $(\mathbf P)$ is typically unknown in practice. The algorithm therefore works with a threshold $\widetilde\gamma$ extracted from the data. Below we show that, under suitable assumptions, $f$ satisfies condition $(\mathbf P)$ at the chosen anchor points with an effective lower bound inherited from $\widetilde\gamma$.

\subsection{Robustness of the discrete reconstruction}

We now analyze the error introduced by frequency discretization, truncation, and measurement noise. Let $ f=\sum_{n\in\Z} c_n\varphi(\cdot-\beta n)\in V_\beta^\infty(\varphi), c\in\ell^\infty(\Z)$,
and let $p\in[-s,s]$. For $|\xi|\le r$, define
\begin{equation}\label{eq:Gp-def}
\mathcal G_p(\xi) := h\sum_{n=-N}^{N} e^{\frac{\pi ia\beta n\xi}{b}}
\sum_{k=-H}^{H}Y_{n,k}\,e^{-\frac{2\pi i\xi hk}{b}}T_{\frac{\beta}{2}n}\widetilde{\varphi_\xi}(p+\xi).
\end{equation}
If $p=p_j$, then
$$ \mathcal G_{p_j}(\xi)=\sqrt{A_j}\,\overline{R_j(\xi)}.$$
Hence the error of the local discrete reconstruction is measured by
\begin{equation}\label{eq:Ej-def}
E_j(\xi):=\bigl|f_\xi(p_j+\xi)-\mathcal G_{p_j}(\xi)\bigr|.
\end{equation}

The following theorem bounds this error explicitly.

\begin{theorem}\label{local-discrete-error.thm}
Let $f=\sum_{n\in\Z} c_n\varphi(\cdot-\beta n)\in V_\beta^\infty(\varphi), c\in\ell^\infty(\Z)$,
and let $N\hskip-.02in=\hskip-.02in\left\lceil \hskip-.02in\frac{2}{\beta}\Bigl(s+\frac{r}{2}\Bigr)\hskip-.02in\right\rceil+m$ and 
$H=\left\lceil \frac{|a|\beta N}{2h}\right\rceil+q$, 
where $m,q\in\N$ and $s,r,h>0$. Let $Y=(Y_{n,k})$  be the noisy phaseless STLCT measurements   given by \eqref{noisy.data.def}, and let $\mathcal G_p$ be defined by \eqref{eq:Gp-def}. Then, for every $p\in[-s,s]$ and every $\xi\in[-r,r]$, we have
\begin{align}\label{eq:local-discrete-error}
|f_\xi(p+\xi)-\mathcal G_p(\xi)|& \le Ke^{\frac{\xi^2}{4\sigma^2}} \left( \frac{4\sqrt{\pi}\sigma}{\nu\beta}\|c\|_\infty^2\Big(1+\frac{2\sqrt{2\pi}\sigma}{\beta}\Big)^2e^{-\frac{\nu\beta}{2}m} 
+2\sqrt{2}h\Big(1+\frac{2}{\nu\beta}\Big)\|\eta\|_\infty \right.
\nonumber\\
&\left.+2\sqrt{\pi}\sigma\|c\|_\infty^2\Big(1+\frac{2\sqrt{2\pi}\sigma}{\beta}\Big)^2 \big(1+\frac{2}{\nu\beta}\big)
\Big(\frac{2e^{1+\frac{|\xi|}{\sigma}}}{e^{\frac{b}{\sigma h}}-1}
+e^{-\frac{2\pi^2\sigma^2 h^2 q^2}{b^2}}\Big)\right),
\end{align}
where $K=K(\sigma,\beta)$ and $\nu=\nu(\sigma,\beta)$ are the decay constants in \eqref{eq6}.
\end{theorem}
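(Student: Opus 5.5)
We plan to start from the continuous representation \eqref{eq:continuous-fw} evaluated at $t=p+\xi$ and compare it term by term with $\mathcal G_p(\xi)$. Writing $\mathcal M_n(\xi):=\int_{\R}M_f(\tfrac{\beta n}{2},u)e^{-\frac{2\pi i\xi u}{b}}\,du$, we split the error as
\begin{align*}
f_\xi(p+\xi)-\mathcal G_p(\xi)
&=\sum_{|n|>N} e^{\frac{\pi ia\beta n\xi}{b}}\mathcal M_n(\xi)\,T_{\frac{\beta}{2}n}\widetilde{\varphi_\xi}(p+\xi)\\
&\quad+\sum_{|n|\le N} e^{\frac{\pi ia\beta n\xi}{b}}\Big(\mathcal M_n(\xi)-h\sum_{k=-H}^{H}M_f(\tfrac{\beta n}{2},hk)e^{-\frac{2\pi i\xi hk}{b}}\Big)T_{\frac{\beta}{2}n}\widetilde{\varphi_\xi}(p+\xi)\\
&\quad-h\sum_{|n|\le N} e^{\frac{\pi ia\beta n\xi}{b}}\sum_{k=-H}^{H}\eta_{n,k}e^{-\frac{2\pi i\xi hk}{b}}\,T_{\frac{\beta}{2}n}\widetilde{\varphi_\xi}(p+\xi),
\end{align*}
which gives truncation, quadrature and noise terms. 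Throughout we use \eqref{dual.phi_w} and \eqref{eq6}, which give $|T_{\frac{\beta}{2}n}\widetilde{\varphi_\xi}(t)|\le\sqrt2 e^{\frac{\xi^2}{4\sigma^2}}K e^{-\nu|t-\frac{\xi}{2}-\frac{\beta n}{2}|}$. This accounts for the common prefactor $Ke^{\xi^2/(4\sigma^2)}$.

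\emph{Noise term.} We bound it directly by $\sqrt2 Ke^{\xi^2/4\sigma^2} h\|\eta\|_\infty\sum_n e^{-\nu|p+\frac{\xi}{2}-\frac{\beta n}{2}|}$. The lattice sum is then estimated as in the computation of $C(\sigma,\beta)$ by $2(1+\frac{2}{\nu\beta})$, which yields the middle term.

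\emph{Truncation term.} By Lemma~\ref{lem4}, $|\mathcal M_n(\xi)|=|\langle f_\xi,T_{\frac{\beta}{2}n}\varphi_\xi\rangle|\le\|f\|_\infty^2\|\varphi_\xi\|_{L^1}$. Here $\|f\|_\infty\le\|c\|_\infty(1+\frac{2\sqrt{2\pi}\sigma}{\beta})$, using $\sum_k\varphi(t-\beta k)\le 1+\frac1\beta\int\varphi$, and $\|\varphi_\xi\|_{L^1}\le\sqrt{\pi}\sigma$. For $|n|>N$ with $N\ge\frac2\beta(s+\frac r2)+m$, we have $|p+\frac\xi2-\frac{\beta n}{2}|\ge\frac\beta2(|n|-N+m)$. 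Summing the geometric tail on both sides then gives $\lesssim\frac{2}{\nu\beta}e^{-\nu\beta m/2}$, which produces the first term; the constants should match after a little bookkeeping.

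\emph{Quadrature term (main obstacle).} For fixed $n$, the function $u\mapsto M_f(\frac{\beta n}{2},u)e^{-2\pi i\xi u/b}$ extends to an entire function by Lemma~\ref{lem:entire-stlct-magnitude}. We will apply the trapezoidal-rule Lemma~\ref{lem:trapezoidal}, which bounds the error by a strip-analyticity term plus a tail term. For the strip term, we use \eqref{eq59}: $M_f(x,u)=\frac1b|\mathcal F f^{\varphi,x}(u/b)|^2$, so $M_f(x,\cdot)$ continues analytically as $\frac1b\mathcal F f^{\varphi,x}(z/b)\overline{\mathcal F f^{\varphi,x}(\bar z/b)}$. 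On the strip $|\mathrm{Im}\,z|\le y$ this is controlled by $\int|f^{\varphi,x}(t)|e^{2\pi|t|y/b}dt$, where $f^{\varphi,x}$ is built from a Gaussian centered at $x$. Choosing $y\sim b/(2\pi\sigma)$, and accounting for the factor $e^{2\pi|\xi| y/b}\le e^{|\xi|/\sigma}$ coming from the modulation, should give the term $\frac{2e^{1+|\xi|/\sigma}}{e^{b/(\sigma h)}-1}$ times $\|f\|_\infty^2\sqrt\pi\sigma$. For the tail $|hk|>hH$, we use Gaussian decay of $\mathcal F f^{\varphi,x}$ centered near frequency $\frac{a x}{b}$, which comes from the chirp $e^{i\pi a t^2/b}$ with $t\approx x$. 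Since $|x|\le\beta N/2$ and $H\ge\frac{|a|\beta N}{2h}+q$, the remaining distance is at least $hq$, and the tail is $\lesssim e^{-2\pi^2\sigma^2h^2q^2/b^2}$. The delicate point is precisely this chirp shift: the local frequency of $f^{\varphi,x}$ depends on $x$, and the choice of $H$ is designed to absorb it. We expect to verify it by completing the square in $\mathcal F f^{\varphi,x}$ termwise in the expansion of $f$.

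Finally, we sum the per-$n$ quadrature errors against $|T_{\frac\beta2n}\widetilde{\varphi_\xi}(p+\xi)|$, again using the lattice bound $2(1+\frac2{\nu\beta})$, and add the three contributions to obtain \eqref{eq:local-discrete-error}.
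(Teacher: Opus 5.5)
Your skeleton is the same as the paper's. You use the same split into the tail $|n|>N$, the quadrature error for $|n|\le N$, and the noise. You use the same decay bound for $T_{\frac{\beta}{2}n}\widetilde{\varphi_\xi}$, with the lattice sums $2(1+\frac{2}{\nu\beta})$ and $\frac{4}{\nu\beta}e^{-\frac{\nu\beta}{2}m}$. The quadrature step is the trapezoidal rule on $|\Im z|<\frac{b}{2\pi\sigma}$, and the cutoff uses that $M_f(\frac{\beta n}{2},\cdot)$ has a Gaussian envelope centred at $\frac{a\beta n}{2}$. Your noise term is exactly the paper's.

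The gap is in how you bound $M_f$, and it is not a matter of bookkeeping.

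\emph{The constant.} For $|\mathcal M_n(\xi)|$, and again for the strip term, you go through $\|f\|_\infty^2\int\varphi^2=\sqrt{\pi}\,\sigma\|f\|_\infty^2$. Combined with your $\|f\|_\infty\le\|c\|_\infty(1+\frac{2\sqrt{2\pi}\sigma}{\beta})$, this gives the first and third terms of \eqref{eq:local-discrete-error} multiplied by $\sqrt2$ (e.g.\ $\frac{4\sqrt{2\pi}\sigma}{\nu\beta}$ instead of $\frac{4\sqrt{\pi}\sigma}{\nu\beta}$). The sharper bound $\|f\|_\infty\le\|c\|_\infty(1+\frac{\sqrt{2\pi}\sigma}{\beta})$ compensates only when $\sigma/\beta$ is not small. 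The quadrature and noise terms can be made arbitrarily small while the first term stays fixed, so the excess cannot be absorbed elsewhere. The loss is real: for $f=\varphi$ and $x=0$ one has $\int_\R M_f(0,u)\,du=\int\varphi^4=\sqrt{\pi/2}\,\sigma$, while your bound gives $\sqrt{\pi}\,\sigma$.

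The missing step is to use the explicit factorization of Lemma~\ref{lem:entire-stlct-magnitude} for all estimates, not only for the tail. That factorization is $M_f(x,z)=\frac{\pi\sigma^2}{b}e^{-\frac{2\pi^2\sigma^2}{b^2}(z-ax)^2}V_x(z)$, with $|V_x(t+iy)|\le\|c\|_\infty^2(1+\frac{2\sqrt{2\pi}\sigma}{\beta})^2e^{\frac{2\pi^2\sigma^2y^2}{b^2}}$ after completing the square in $\ell$. Integrating the Gaussian envelope in $t$ then gives the factor $\sqrt{\pi/2}\,\sigma$ that the stated constants require.

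\emph{The strip estimate.} As sketched, it does not verify the hypotheses of Lemma~\ref{lem:trapezoidal}. Write $g:=f^{\check\varphi,x}$. The bound $|\mathcal F g(z/b)|\le\int|g(t)|e^{2\pi|t||\Im z|/b}\,dt$ is constant in $\Re z$. It therefore gives neither $M(W)=\sup_{|y|<\zeta}\int_\R|W(t+iy)|\,dt<\infty$ nor the decay of $W$ in the strip.

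The uncentred weight also costs roughly $e^{2\pi|x||y|/b}$ per factor, with $|x|$ up to $\frac{\beta N}{2}$, so the bound is not uniform in $n$. The $x$-dependence cancels only after centring, because the two phase factors in $\mathcal F g(z/b)\overline{\mathcal F g(\bar z/b)}$ have reciprocal moduli.

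Centring and then applying Cauchy--Schwarz and Plancherel to $s\mapsto g(s+x)e^{\pm2\pi sy/b}$ repairs this. It yields only $M(W)\le\sqrt{\pi}\,\sigma\|f\|_\infty^2e^{1+|\xi|/\sigma}$, i.e.\ the same $\sqrt2$ loss. The factorization instead gives the pointwise Gaussian bound \eqref{eq:Wxy-bound-proof} on each horizontal line. From it, $M(W)$, the decay, the cutoff sum over $|k|>H$, and the bound on $|\mathcal M_n(\xi)|$ all follow at once.

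\emph{The window.} With window $\check\varphi$, \eqref{eq59} involves $f^{\check\varphi,x}$, not $f^{\varphi,x}$. The two chirps cancel to the linear modulation $e^{-\pi i\frac{a}{b}x^2}e^{2\pi i\frac{ax}{b}t}$, so the centre $ax$ of the envelope is exact rather than a local-frequency heuristic. With the unchirped window the chirp would broaden the spectrum, and the rate $e^{-\frac{2\pi^2\sigma^2h^2q^2}{b^2}}$ would be lost.
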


The error bound in  \eqref{eq:local-discrete-error} decomposes into three terms according to their sources. The first term arises from truncating the time-domain sum to a finite range. The second term reflects the influence of measurement noise. The third term is the frequency discretization error incurred by replacing the continuous integral in \eqref{eq:continuous-fw} with a trapezoidal sum. The proof of the above theorem is included in Appendix \ref{proof.local-discrete-error.thm}.

We next record the analytic ingredients underlying Theorem~\ref{local-discrete-error.thm}.

\begin{lemma}\label{lem:entire-stlct-magnitude}
Let $f=\sum_{n\in\Z} c_n\varphi(\cdot-\beta n)\in V_\beta^\infty(\varphi), c\in\ell^\infty(\Z)$. For every $(x,t)\in\R^2$, we have 
$$ |S_{\check{\varphi}}^{\mathbf A}f(x,t)|^2 = \frac{\pi\sigma^2}{b} e^{-\frac{2\pi^2\sigma^2}{b^2}(t-ax)^2}V_x(t),$$
where $V_x(t)=\sum_{\ell\in\Z}\breve r_\ell^x\,e^{\pi i\frac{\beta\ell}{b}t}$
is a trigonometric series with coefficients
\begin{equation}\label{trig.coef.lem.def.eq1}
\breve r_\ell^x:=e^{-\pi i\frac{\beta ax}{b}\ell}\sum_{n\in\Z}c_n\overline{c_{n+\ell}}e^{-\frac{(x-\beta n)^2}{4\sigma^2}}e^{-\frac{(x-\beta (n+\ell))^2}{4\sigma^2}}
\end{equation}
satisfying 
$$ |\breve r_\ell^x| \le \|c\|_\infty^2\left(1+\frac{\sigma\sqrt{2\pi}}{\beta}\right)
e^{-\frac{\beta^2\ell^2}{8\sigma^2}}.$$  
In particular, for every fixed $x\in\R$, the map 
$$t\mapsto |S_{\check{\varphi}}^{\mathbf A}f(x,t)|^2$$ extends to an entire function on $\C$. %This extension is explicitly given by the map $\C \ni z \mapsto \frac{\pi \sigma^2}{b} e^{- \frac{2 \pi^2 \sigma^2}{b^2} (z - ax)^2} V_x(z)$. 
\end{lemma}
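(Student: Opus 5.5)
The plan is to compute $S_{\check{\varphi}}^{\mathbf A}f(x,t)$ explicitly and then expand the modulus squared as a double sum. We start from \eqref{eq59}, $\mathcal{S}_{\check\varphi}^{\mathbf A}f(x,t)=\frac{1}{\sqrt{ib}}e^{i\pi\frac{d}{b}t^2}\mathcal F f^{\check\varphi,x}(t/b)$. Since $\overline{\check\varphi(u-x)}=e^{-i\pi\frac ab(u-x)^2}\varphi(u-x)$, the two chirps combine as $e^{i\pi\frac ab u^2}e^{-i\pi\frac ab(u-x)^2}=e^{2\pi i\frac ab xu}e^{-i\pi\frac ab x^2}$. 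The chirp therefore reduces to a modulation by $ax/b$ together with a unimodular constant, which disappears after taking modulus. This gives
\[
|S_{\check{\varphi}}^{\mathbf A}f(x,t)|=\tfrac{1}{\sqrt b}\,\bigl|\mathcal V_\varphi f\bigl(x,\tfrac{t-ax}{b}\bigr)\bigr| .
\]
Next we insert $f=\sum_n c_n\varphi(\cdot-\beta n)$ and use the closed-form Gaussian integral
\[
\int\varphi(u-\beta n)\varphi(u-x)e^{-2\pi i\omega u}\,du=\sqrt{\pi}\sigma\, e^{-\frac{(x-\beta n)^2}{4\sigma^2}}e^{-\pi^2\sigma^2\omega^2}e^{-\pi i\omega(x+\beta n)} .
\]
Interchanging sum and integral is justified because $c\in\ell^\infty$ and the Gaussians decay.

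We then take the modulus squared with $\omega=(t-ax)/b$, expand it as a double sum over $n$ and $n'=n+\ell$, and discard the common phase $e^{-\pi i\omega x}$. This yields the prefactor $\frac{\pi\sigma^2}{b}e^{-\frac{2\pi^2\sigma^2}{b^2}(t-ax)^2}$ times
\[
\sum_{n,\ell}c_n\overline{c_{n+\ell}}\,e^{-\frac{(x-\beta n)^2}{4\sigma^2}}e^{-\frac{(x-\beta(n+\ell))^2}{4\sigma^2}}\,e^{\pi i\omega\beta\ell}.
\]
Writing $e^{\pi i\omega\beta\ell}=e^{\pi i\frac{\beta\ell}{b}t}e^{-\pi i\frac{\beta a x}{b}\ell}$ gives exactly $\breve r^x_\ell$ and $V_x(t)$. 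The rearrangement is legitimate by absolute convergence, which follows from the coefficient bound proved next.

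For the coefficient bound, we complete the square in $n$:
\[
(x-\beta n)^2+(x-\beta n-\beta\ell)^2=2\bigl(x-\beta n-\tfrac{\beta\ell}{2}\bigr)^2+\tfrac{\beta^2\ell^2}{2}.
\]
Hence
\[
|\breve r^x_\ell|\le\|c\|_\infty^2e^{-\frac{\beta^2\ell^2}{8\sigma^2}}\sum_n e^{-\frac{(x-\beta n-\beta\ell/2)^2}{2\sigma^2}}.
\]
The remaining sum is bounded by $1+\frac1\beta\int e^{-y^2/(2\sigma^2)}dy=1+\frac{\sqrt{2\pi}\sigma}{\beta}$, using the standard bound (max term plus integral) for the Riemann sum of a unimodal function. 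This is the step that needs the most care, though it is routine.

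Finally, for entireness, replace $t$ by $z\in\C$. On $|\mathrm{Im}\,z|\le R$ each term of $V_x(z)$ is bounded by $\|c\|^2_\infty(1+\frac{\sqrt{2\pi}\sigma}{\beta})e^{-\frac{\beta^2\ell^2}{8\sigma^2}}e^{\pi\beta|\ell|R/b}$, which is summable. By the Weierstrass M-test, $V_x$ converges locally uniformly and is therefore entire. Multiplying by the entire Gaussian factor gives the entire extension of $t\mapsto|S_{\check\varphi}^{\mathbf A}f(x,t)|^2$.
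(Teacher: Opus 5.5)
Your proof is correct and follows essentially the paper's argument: you cancel the chirps, evaluate the Gaussian Fourier integral in closed form, expand the modulus squared with $\ell=k-n$, complete the square to bound the coefficients, and apply the M-test on horizontal strips to get entireness. Your intermediate identity $|S_{\check\varphi}^{\mathbf A}f(x,t)|=b^{-1/2}\,|\mathcal V_\varphi f(x,\tfrac{t-ax}{b})|$ only repackages the chirp computation that the paper carries out inline.
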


    The proof of above lemma is given in Appendix \ref{proof.lem.entire-stlct-magnitude}.

\smallskip 

To discretize the Fourier-type integral in \eqref{eq:continuous-fw}, we apply the trapezoidal rule to the integrand
$$W_{x,\xi}(t):=M_f(x,t)e^{-\frac{2\pi i\xi t}{b}}, \  t\in\R,$$
where $M_f(x,t)=|S_{\check{\varphi}}^{\mathbf A}f(x,t)|^2.$ 
By Lemma \ref{lem:entire-stlct-magnitude}, for every fixed $x\in\R$, the map
$$t\mapsto M_f(x,t)$$
extends to an entire function on $\C$. Hence, for every fixed $x, \xi\in\R$, the function
$$ t\mapsto W_{x,\xi}(t)=M_f(x,t)e^{-\frac{2\pi i\xi t}{b}}$$
also extends to an entire function on $\C$. This analytic continuation allows us to use the trapezoidal rule and obtain exponentially accurate discretization of the integral in \eqref{eq:continuous-fw}.

For a map $W:\R\to\C$, step size $h>0$, and truncation parameter $H\in\N\cup\{\infty\}$, we define the trapezoidal approximation by
$$ I_h^H(W):=h\sum_{k=-H}^{H}W(hk).$$
Applied to $W_{x,\xi}$, this gives
\begin{equation*}
I_h^H(W_{x,\xi})=h\sum_{k=-H}^{H}M_f(x,hk)e^{-\frac{2\pi i\xi hk}{b}}.
\end{equation*}

The following result of Trefethen and Weideman shows that the trapezoidal rule converges exponentially fast for analytic functions satisfying suitable decay conditions in a horizontal strip. 

\begin{lemma}[Trefethen--Weideman {\cite[Theorem 5.1]{trefethen2014exponentially}}]\label{lem:trapezoidal}
Let $\zeta>0$. Suppose $W$ is analytic in the strip
 $ U_\zeta:=\{z\in\C:\ |\Im z|<\zeta\}$, 
satisfies $W(x)\to 0$ uniformly as $|x|\to\infty$ in $U_\zeta$, and
$$ M(W):=\sup_{t+iy\in U_\zeta}\int_{\R}|W(t+iy)|\,dt<\infty.$$ 
Then for any $h > 0$, the trapezoidal rule approximation $I_h^{\infty} (W)$ exists and satisfies
	\begin{align*}
		| I_h^{\infty} (W) - \int_{\R} W(t)dt | \le \frac{2 M(W)}{e^{2 \pi \frac{\zeta}{h}} - 1}.
	\end{align*}
\end{lemma}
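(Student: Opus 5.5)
The plan is to prove the estimate by Poisson summation combined with a contour shift in the Fourier integral. The analyticity of $W$ in the strip $U_\zeta$ makes the Fourier transform $\widehat W(\omega)=\int_{\R}W(t)e^{-2\pi i\omega t}\,dt$ decay like $e^{-2\pi\zeta|\omega|}$. The aliasing terms in Poisson's formula are exactly $\widehat W(j/h)$, $j\neq 0$, and summing the resulting geometric series gives the constant $2M(W)/(e^{2\pi\zeta/h}-1)$.

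\emph{Step 1: exponential decay of $\widehat W$.} Fix $0<\zeta'<\zeta$ and $\omega>0$. Integrate $W(z)e^{-2\pi i\omega z}$ over the boundary of the rectangle with vertices $\pm R$ and $\pm R - i\zeta'$. Cauchy's theorem gives zero for this contour integral. The vertical sides vanish as $R\to\infty$ because $W\to0$ uniformly in the strip and the exponential factor is bounded by $e^{2\pi\omega\zeta'}$. Hence
\[
\widehat W(\omega)=\int_{\R}W(t-i\zeta')e^{-2\pi i\omega(t-i\zeta')}\,dt ,
\]
and since $|e^{-2\pi i\omega(t-i\zeta')}|=e^{-2\pi\omega\zeta'}$, we get $|\widehat W(\omega)|\le M(W)e^{-2\pi\omega\zeta'}$. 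For $\omega<0$ we shift upward instead. Letting $\zeta'\uparrow\zeta$ yields $|\widehat W(\omega)|\le M(W)e^{-2\pi\zeta|\omega|}$.

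\emph{Step 2: existence of $I_h^\infty(W)$ and Poisson summation.} First I show $\sum_k|W(hk)|<\infty$. For $\epsilon<\min\{\zeta,h/2\}$, the sub-mean value property of $|W|$ on the disc of radius $\epsilon$ about $hk$ gives
\[
|W(hk)|\le \frac{1}{\pi\epsilon^2}\int_{D(hk,\epsilon)}|W| .
\]
These discs are disjoint and lie in a strip of height $2\epsilon$, so by Fubini and the definition of $M(W)$ the sum over $k$ is at most $\frac{2\epsilon}{\pi\epsilon^2}M(W)<\infty$. Thus $I_h^\infty(W)$ exists.

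Next consider the periodization $P(x)=\sum_k W(x+hk)$. By the same argument it converges uniformly, so $P$ is continuous and $h$-periodic. Its Fourier coefficients are $\frac1h\widehat W(j/h)$, and by Step~1 these are absolutely summable. Hence the Fourier series of $P$ converges to $P$ everywhere, and at $x=0$ this gives
\[
h\sum_k W(hk)=\sum_{j\in\Z}\widehat W(j/h).
\]

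\emph{Step 3: conclusion.} The $j=0$ term is $\int_{\R}W$. Therefore
\[
\Bigl|I_h^\infty(W)-\int_{\R}W\Bigr|\le\sum_{j\ne0}|\widehat W(j/h)|\le 2M(W)\sum_{j\ge1}e^{-2\pi\zeta j/h}=\frac{2M(W)}{e^{2\pi\zeta/h}-1}.
\]

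The main obstacle is the technical justification of Step 2: the summability of $W$ on the grid and the pointwise validity of Poisson summation. Both are handled by the subharmonic-mean bound and the absolute summability of $\widehat W(j/h)$ coming from Step~1.
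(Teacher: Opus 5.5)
Your proof is correct and complete. The contour shift gives $|\widehat W(\omega)|\le M(W)e^{-2\pi\zeta|\omega|}$, the sub-mean-value bound gives the existence of $I_h^\infty(W)$ and the uniform convergence of the periodization, and summing the aliasing terms $\widehat W(j/h)$, $j\neq 0$, yields exactly $2M(W)/(e^{2\pi\zeta/h}-1)$. The paper itself gives no proof of this lemma; it only cites Theorem~5.1 of Trefethen--Weideman, and your Poisson-summation-plus-contour-shift argument is the standard proof of that result, so your route matches the one the paper relies on.
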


The proof of Theorem \ref{local-discrete-error.thm} is obtained by combining Lemma \ref{lem:entire-stlct-magnitude}, Lemma \ref{lem:trapezoidal}, and the decay estimate \eqref{eq6}, and we place the details in Appendix \ref{proof.local-discrete-error.thm}.

As a consequence of \eqref{eq:local-discrete-error} and the definition of the local reconstruction errors $E_j$ in \eqref{eq:Ej-def}, we obtain the uniform bound
\begin{align}\label{eq9}
\max_{1\le j\le J-1} \hskip-.06in \|E_j\|_{L^\infty([0,p_{j+1}-p_j])} & \hskip-.05in \le \hskip-.05in
D_1\|c\|_\infty^2 e^{-\frac{\nu\beta}{2}m} \hskip-.05in + \hskip-.05in D_2\|c\|_\infty^2\frac{1}{e^{\frac{b}{\sigma h}} \hskip-.05in - \hskip-.05in 1} \hskip-.05in + \hskip-.05in D_3\|c\|_\infty^2 e^{-\frac{2\pi^2\sigma^2 h^2 q^2}{b^2}}
\hskip-.06in + \hskip-.05in D_4 h\|\eta\|_\infty,
\end{align}
where
\begin{align}\label{eq8}
D_1:={}& \frac{4\sqrt{\pi}\sigma}{\nu\beta} K e^{\frac{r^2}{4\sigma^2}} \left(1+\frac{2\sqrt{2\pi}\sigma}{\beta}\right)^2,
\notag\\
D_2:={}& 4\sqrt{\pi}K\sigma e^{\frac{r^2}{4\sigma^2}} \left(1+\frac{2\sqrt{2\pi}\sigma}{\beta}\right)^2
\left(1+\frac{2}{\nu\beta}\right) e^{1+\frac{r}{\sigma}},
\notag\\
D_3:={}& 2\sqrt{\pi}K\sigma e^{\frac{r^2}{4\sigma^2}} \left(1+\frac{2\sqrt{2\pi}\sigma}{\beta}\right)^2
\left(1+\frac{2}{\nu\beta}\right),
\notag\\
D_4:={}& 2\sqrt{2}K e^{\frac{r^2}{4\sigma^2}} \left(1+\frac{2}{\nu\beta}\right).
\end{align}

The estimate \eqref{eq9} provides a uniform control of the local reconstruction errors on all anchor intervals, and will be used  to  propagate the local reconstruction error across adjacent anchor intervals and thereby derive a global error estimate for the reconstruction algorithm.

\subsection{Stability of the Reconstruction} 
We now turn to the stability of Algorithm \ref{alg:stlct}. The first step is to quantify the local error produced by the discrete reconstruction procedure. We introduce a tolerance parameter $\varepsilon>0$ and require the sampling lattice and the noise level to be chosen so that the local reconstruction errors satisfy
$$ \max_{1\le j\le J-1}\|E_j\|_{L^\infty([0,p_{j+1}-p_j])}\le \varepsilon,$$
cf. \eqref{Ej-uniform-small.eq}. Thus, $\varepsilon$ measures the total local numerical error coming from the truncation of the spatial sum, the discretization of the Fourier-type integral by the trapezoidal rule, and the measurement noise. 

To pass from this local control to a global reconstruction bound, we introduce the conditioning factor
\begin{equation}\label{kappa-def.eq}
\kappa_{f,\gamma} := \frac{\max\{1,\|f\|_{L^\infty(I)}\}}{\min\{\gamma,\gamma^2\}}.
\end{equation}
The quantity $\kappa_{f,\gamma}$ reflects the stability of the phase propagation step in the algorithm. More precisely, the factor $\|f\|_{L^\infty(I)}$ accounts for the size of the signal on the reconstruction interval, while the denominator records the lower bound $|f(p_j)|\ge \gamma$ at the anchor points. In particular, small anchor values lead to a less favorable conditioning of the reconstruction problem.

The main result below shows that, under explicit assumptions on the sampling lattice and the noise level, the anchor points detected by Algorithm \ref{alg:stlct} satisfy Condition $(\mathbf P)$, the phase transition factors are well defined, and the output function $\mathcal R$ approximates the underlying signal on $I=[-s,s]$,  up to a global unimodular constant, with reconstruction error bounded by a constant multiple of $\varepsilon\kappa_{f,\gamma}$.

\begin{theorem}[Robustness of Algorithm~\ref{alg:stlct}]\label{alg-stability.thm}
Let $f=\sum_{n\in\mathbb Z}c_n\varphi(\cdot-\beta n)\in V_\beta^\infty(\varphi)$,  
and let $r,\gamma,s>0$ with $r\le 2s$. Assume that
\begin{equation}\label{local.error.def.eq1}
0<\varepsilon \le \min\left\{\gamma^2,\frac{\gamma^3}{\|f\|_{L^\infty(I)}}\right\}, \ I=[-s, s].\end{equation}  
Suppose that the noisy phaseless STLCT measurements $Y=(Y_{n,k})$ in \eqref{noisy.data.def} are taken on the sampling lattice 
$$X_{N,H,h}=\frac{\beta}{2}\{-N,\dots,N\}\times h\{-H,\dots,H\}$$
satisfying 
\begin{align}
N &\ge \left\lceil \frac{2}{\beta}\Bigl(s+\frac{r}{2}\Bigr)\right\rceil
+\frac{2}{\nu\beta}\log\!\left(\frac{16sD_1\|c\|_\infty^2}{r\varepsilon}\right), \label{cond.N}\\
\frac{1}{h} &\ge \frac{\sigma}{b}\log\!\left(\frac{16sD_2\|c\|_\infty^2}{r\varepsilon}+1\right), \label{cond.h}\\
H &\ge \left\lceil \frac{|a|\beta N}{2h}\right\rceil
+\frac{b}{\sqrt{2}\pi\sigma h}
\left(\log\!\frac{16sD_3\|c\|_\infty^2}{r\varepsilon}\right)^{1/2}, \label{cond.H}
\end{align}
and that the noise level satisfying 
\begin{equation}\label{cond.eta}
\|\eta\|_\infty \le \frac{r\varepsilon}{16shD_4},
\end{equation}
where $K=K(\sigma,\beta)$ and $\nu=\nu(\sigma,\beta)$ are as in \eqref{eq6}, and $D_1, D_2, D_3, D_4$ are as in \eqref{eq8}. Apply  Algorithm \ref{alg:stlct}  with  parameters $$
r, \ \tilde\gamma=\frac{3}{2}\gamma^2, \  s.$$  Let $$ -s=p_1<\cdots<p_J=s$$
be the anchor points  selected in Step 2 of Algorithm \ref{alg:stlct}. 
Then the following hold.
\begin{enumerate}

\item
The signal $f$ satisfies Condition $(\mathbf P)$ with anchor points
$p_1,\ldots,p_J$ and lower bound $\gamma$.

\item
The phase transition factors $\rho_j$ are well defined for
$1\le j\le J-1$. Moreover, the output $\mathcal R$ of
Algorithm \ref{alg:stlct} satisfies
\begin{equation}\label{alg.robust.thm.eq2}
\min_{\tau\in\mathbb T} \|f-\tau\mathcal R\|_{L^\infty(I)}
\le\frac{11}{4}\,\varepsilon\,\kappa_{f,\gamma},
\end{equation}
where
$$ \kappa_{f,\gamma} :=\frac{\max\{1,\|f\|_{L^\infty(I)}\}}{\min\{\gamma,\gamma^2\}}.$$
\end{enumerate}
\end{theorem}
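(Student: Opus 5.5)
The plan is to first turn the parameter conditions \eqref{cond.N}--\eqref{cond.eta} into a uniform local error bound, then recover Condition $(\mathbf P)$ from the data, and finally redo the phase-propagation argument of Theorem~\ref{stable.thm2} with the discrete reconstructions $R_j$ in place of $g$.

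\emph{Step 1: uniform local error.} With $m=N-\lceil\frac{2}{\beta}(s+\frac r2)\rceil$ and $q=H-\lceil\frac{|a|\beta N}{2h}\rceil$, conditions \eqref{cond.N}--\eqref{cond.eta} make each of the four terms in \eqref{eq9} at most $\frac{r\varepsilon}{16s}\le\frac{\varepsilon}{8}$. This uses $r\le 2s$; the factor $\frac{r}{s}$ is kept because the number of anchors $J$ is at most of order $s/r$. Hence
\begin{equation}\label{Ej-uniform-small.eq}
\sup_{p\in[-s,s],\,|\xi|\le r}|f_\xi(p+\xi)-\mathcal G_p(\xi)|\le \tfrac{r\varepsilon}{4s}\le\tfrac{\varepsilon}{2}.
\end{equation}
Taking $\xi=0$ gives $\mathcal G_p(0)=\mathcal A(p)$, so $\bigl||f(p)|^2-\mathcal A(p)\bigr|\le\frac\varepsilon2$ for all $p\in I$.

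\emph{Step 2: part (1).} Every anchor selected in Step 2 satisfies $\mathcal A(p_j)\ge\frac32\gamma^2$. Therefore $|f(p_j)|^2\ge\frac32\gamma^2-\frac\varepsilon2\ge\gamma^2$, using $\varepsilon\le\gamma^2$, and Condition $(\mathbf P)$ holds. We also get $A_j\ge\frac32\gamma^2$ and $|A_j-|f(p_j)|^2|\le\frac\varepsilon2$. Consequently $|\sqrt{A_j}-|f(p_j)||\le\frac{\varepsilon}{2(\sqrt{A_j}+|f(p_j)|)}\le\frac{\varepsilon}{4\gamma}$.

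\emph{Step 3: well-definedness of $\rho_j$ and the local error.} Write $\overline{R_j(\xi)}=\mathcal G_{p_j}(\xi)/\sqrt{A_j}$ and compare it with $f_\xi(p_j+\xi)/|f(p_j)|=\overline{\tau_j}\,\overline{f(p_j+\xi)}$, where $\tau_j=f(p_j)/|f(p_j)|$. The difference is bounded by $\frac{\varepsilon}{2\gamma}+\frac{\varepsilon}{4\gamma^2}\|f\|_{L^\infty(I)}$, which gives $\|R_j-\tau_j^{-1}f(p_j+\cdot)\|\lesssim\varepsilon\,\kappa_{f,\gamma}$ on $[0,p_{j+1}-p_j]$. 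At $\xi=p_{j+1}-p_j$ the exact value has modulus $|f(p_{j+1})|\ge\gamma$. The error there is at most $\frac{\varepsilon}{2\gamma}+\frac{\varepsilon\|f\|}{4\gamma^2}\le\frac{\gamma}{2}+\frac{\gamma}{4}<\gamma$, using $\varepsilon\le\gamma^2$ and $\varepsilon\le\gamma^3/\|f\|$. So $R_j(p_{j+1}-p_j)\ne0$ and $\rho_j$ is well defined. By Lemma~\ref{prop1}, $|\rho_j-\overline{\tau_j}\tau_{j+1}|\le\frac{2}{\gamma}\cdot(\text{error})\lesssim\varepsilon/\gamma^2+\varepsilon\|f\|/\gamma^3$. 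Here I expect the sharper form $\frac{2}{|f(p_{j+1})|}\cdot\frac{r\varepsilon}{4s}\cdot\frac1\gamma$, which keeps the factor $\frac rs$.

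\emph{Step 4: propagation.} Take the global phase $\tau=\tau_\mu$. The products $\rho_\mu\cdots\rho_{j-1}$ approximate $\overline{\tau_\mu}\tau_j$, with telescoping error at most $\sum|\rho_k-\overline{\tau_k}\tau_{k+1}|$, since all the factors are unimodular. The spacing rule $p_{j+2}-p_j\ge r$ gives $J-1\le 2\lceil 2s/r\rceil$, so $|j-\mu|\lesssim s/r$. Multiplying by the per-step error of order $\frac{r}{s}\varepsilon/\gamma^2$ therefore yields an $O(\varepsilon/\min\{\gamma,\gamma^2\})$ phase error, independent of $J$. Combining this with the local error, multiplied by $\|f\|_{L^\infty(I)}$, gives \eqref{alg.robust.thm.eq2}.

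\emph{Main obstacle.} The hard part is the bookkeeping of constants so that the phase accumulation over roughly $4s/r$ steps, against a per-step error of order $\frac{r\varepsilon}{s}$, produces exactly the stated constant $\frac{11}{4}$. The first thing I would try is to track the factor $\frac{r}{16s}$ explicitly throughout instead of bounding it by $\frac18$ early.
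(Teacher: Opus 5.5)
Your architecture matches the paper's proof: the uniform local bound $r\varepsilon/(4s)\le\varepsilon/2$ from \eqref{eq9}, Condition $(\mathbf P)$ from $\mathcal A(p_j)\ge\frac32\gamma^2$, nonvanishing of $R_j(p_{j+1}-p_j)$ via the bound $\frac{\varepsilon}{2\gamma}+\frac{\varepsilon\|f\|_{L^\infty(I)}}{4\gamma^2}\le\frac{3\gamma}{4}$, Lemma~\ref{prop1} for each step, and telescoping from $p_\mu$. There is one small slip: $f_\xi(p_j+\xi)/|f(p_j)|=\tau_j\overline{f(p_j+\xi)}$, not $\overline{\tau_j}\,\overline{f(p_j+\xi)}$. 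Your next line, $R_j\approx\overline{\tau_j}f(p_j+\cdot)$, is correct.

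The genuine gap is the per-step phase estimate. You apply Lemma~\ref{prop1} to $R_j(p_{j+1}-p_j)$ and $\overline{\tau_j}f(p_{j+1})$, so the error contains the normalization mismatch $|\sqrt{A_j}-|f(p_j)||$. This gives a per-step bound of size $\frac{r\varepsilon}{s}\bigl(\frac{1}{\gamma^2}+\frac{\|f\|_{L^\infty(I)}}{\gamma^3}\bigr)$. Summing over about $2s/r$ steps and multiplying by $\|f\|_{L^\infty(I)}$ leaves a term $\varepsilon\|f\|_{L^\infty(I)}^2/(2\gamma^3)$. Since $\|f\|_{L^\infty(I)}/\gamma$ can be arbitrarily large, this is not $O(\varepsilon\kappa_{f,\gamma})$, and $\varepsilon\le\gamma^3/\|f\|_{L^\infty(I)}$ does not rescue it. Tracking $r/s$ does not remove this term either, so the obstacle is not constant bookkeeping. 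Your anticipated ``sharper form'' is right, but nothing in the proposal produces it.

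The missing observation is that $\sqrt{A_j}>0$ only rescales. So $\rho_j$ is also the phase of $\overline{\mathcal G_{p_j}(p_{j+1}-p_j)}$. Likewise, $\overline{\tau_j}\tau_{j+1}$ is the phase of $\overline{f_\xi(p_j+\xi)}=\overline{f(p_j)}f(p_{j+1})$ at $\xi=p_{j+1}-p_j$. Applying Lemma~\ref{prop1} to these unnormalized quantities gives
\[
|\rho_j-\overline{\tau_j}\tau_{j+1}|\le\frac{2E_j(p_{j+1}-p_j)}{|f(p_j)f(p_{j+1})|}\le\frac{2E_j(p_{j+1}-p_j)}{\gamma^2}.
\]
This bound is free of $\|f\|_{L^\infty(I)}$. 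The normalization error $\frac{E_j(0)}{\sqrt{A_j}(|f(p_j)|+\sqrt{A_j})}\le\frac{E_j(0)}{2\gamma^2}$ then enters only once, at the terminal anchor, rather than at every step.

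With this, the constant is routine. The paper uses $E_k\le r\varepsilon/(4s)\le\varepsilon/(J-2)$, which follows from $J-2\le 4s/r$, together with $(J-1)/(J-2)\le2$. This bounds the accumulated phase error by $\frac{2\varepsilon}{\gamma^2}+\frac{\varepsilon}{4\gamma^2}$. The total error is then $\frac{9\varepsilon}{4\gamma^2}\|f\|_{L^\infty(I)}+\frac{\varepsilon}{2\gamma}\le\frac{11}{4}\varepsilon\kappa_{f,\gamma}$. The case $J=2$ involves no propagation and follows from the single-interval estimate.
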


\begin{proof}
We first consider the case $J\ge 3$. By Step 2 of Algorithm \ref{alg:stlct}, the selected anchor points satisfy
$$ p_{j+2}-p_j\ge r, \  1\le j\le J-2.$$ 
Thus the odd-indexed anchor points and the even-indexed anchor points are each $r$-separated. Since both subsequences are contained in the interval $[-s,s]$, whose length is $2s$, we obtain
$$ J-2\le \frac{4s}{r}.$$ 
Moreover, since $r\le 2s$, we also have $2\le 4s/r$. Hence
\begin{equation}\label{J-bound.eq}
\max\{J-2,2\}\le \frac{4s}{r}.
\end{equation}

Set $ m:=N-\left\lceil \frac{2}{\beta}\left(s+\frac{r}{2}\right)\right\rceil$ and  $
q:=H-\left\lceil \frac{|a|\beta N}{2h}\right\rceil.$ 
Let $\Xi_1:= D_1\|c\|_\infty^2 e^{-\frac{\nu\beta}{2}m} $, $\Xi_2:= D_2\|c\|_\infty^2\frac{1}{e^{\frac{b}{\sigma h}}  -  1}$, $\Xi_3:= D_3\|c\|_\infty^2 e^{-\frac{2\pi^2\sigma^2 h^2 q^2}{b^2}}$ and $\Xi_4:= D_4 h\|\eta\|_\infty$.
%Let $D_1,D_2,D_3, D_4$ be the four terms as in \eqref{eq9}. 
By the assumptions \eqref{cond.N} - \eqref{cond.eta}, we have
 $$\Xi_k\le \frac{r\varepsilon}{16s}, \  k=1,2,3,4.$$ 
Hence,  \eqref{eq9} yields
\begin{equation}\label{Ej-uniform-small.eq}
\max_{1\le j\le J-1}\|E_j\|_{L^\infty([0,p_{j+1}-p_j])} 
\le \Xi_1 + \Xi_2 + \Xi_3 + \Xi_4 \le \frac{r\varepsilon}{4s} \le \frac{\varepsilon}{2}.
\end{equation}
Consequently,  for $1\le j\le J-1$, 
$$ E_j(\xi)\le \frac{r\varepsilon}{4s} \le \frac{\varepsilon}{2},  \ 0\le \xi\le p_{j+1}-p_j. $$

\smallskip
\noindent\textbf{Step 1: Verification of Condition $(\mathbf P)$.} 
For each anchor point $p_j$, we have $A_j=\mathcal A(p_j)$  by \eqref{eq:anchor-detector} and \eqref{eq:Aj-def}. 
Moreover, we have 
$$ E_j(0)=\bigl||f(p_j)|^2-A_j\bigr| \le \frac{\varepsilon}{2}\le \frac{\gamma^2}{2}, \ 1\le j\le J$$
by \eqref{eq:Ej-def}, \eqref{Ej-uniform-small.eq} and \eqref{local.error.def.eq1}. 
Since Step 2 of the algorithm ensures
$$ A_j\ge \widetilde\gamma=\frac{3}{2}\gamma^2, $$
it follows that
$$ |f(p_j)|^2\ge A_j - \bigl||f(p_j)|^2-A_j\bigr| \ge \gamma^2, \ 1\le j\le J.$$ 
Therefore,
$$ |f(p_j)|\ge \gamma, \ 1\le j\le J,$$
and thus $f$ satisfies Condition~$(\mathbf P)$ with anchor points
$$ -s=p_1<\cdots<p_J=s.$$

\smallskip
\noindent \textbf{Step 2: Prove the well-definedness of phase transition factor $\rho_j$ in \eqref{eq:rhoj-def}.}
  Let $1\le j\le J-1$ and $0\le \xi\le p_{j+1}-p_j$. By the definition of $R_j$, $A_j$, and $E_j$, we have
    \begin{align*}%\label{eq38}
        {}& \left| f(p_j + \xi)  - \frac{f(p_j)}{|f(p_j)|} R_j(\xi) \right|  = \left| \frac{\overline{f(p_j)}}{|f(p_j)|} f(p_j + \xi) - R_j(\xi) \right| \notag\\
       {}&  \le \left| \frac{\overline{f(p_j)}}{|f(p_j)|} f(p_j + \xi) - \frac{\overline{f(p_j)}}{\sqrt{A_j}}f(p_j+\xi)\right|+
\left|\frac{f_{-\xi}(p_j)}{\sqrt{A_j}} - R_j(\xi) \right| \notag\\
       {}&  \le \left| \frac{|f(p_j)|^2 - A_j}{\sqrt{A_j} (|f(p_j)| + \sqrt{A_j})} \right| \|f\|_{L^{\infty} (I)} + \frac{1}{\sqrt{A_j}} \left|\overline{f_\xi(p_j + \xi)} - \sqrt{A_j} R_j(\xi) \right| \notag\\
       {}&  =   \frac{E_j(0)}{\sqrt{A_j} (|f(p_j)| + \sqrt{A_j})}  \|f\|_{L^{\infty} (I)} + \frac{1}{\sqrt{A_j}} E_j(\xi).
    \end{align*}
    
As $|f(p_j)|\ge \gamma$ and $A_j\ge \frac{3}{2}\gamma^2>\gamma^2$, 
we have 
$$ \sqrt{A_j}\ge \gamma \ \ {\rm and} \ \ |f(p_j)|+\sqrt{A_j}\ge 2\gamma.$$  
Combine with \eqref{local.error.def.eq1} and \eqref{Ej-uniform-small.eq},  
we get
\begin{equation}\label{eq38}
\left|f(p_j+\xi)-\frac{f(p_j)}{|f(p_j)|}R_j(\xi)\right| \le \frac{\|f\|_{L^\infty(I)}}{2\gamma^2}E_j(0) +\frac{1}{\gamma}E_j(\xi) \le \frac{\varepsilon\|f\|_{L^\infty(I)}}{4\gamma^2} +\frac{\varepsilon}{2\gamma} \le\frac{3\gamma}{4}.
\end{equation}
Taking $\xi=p_{j+1}-p_j$ yields 
$$\left| f(p_{j+1})-\frac{f(p_j)}{|f(p_j)|}R_j(p_{j+1}-p_j)\right|\le \frac{3\gamma}{4}.$$ 
As $|f(p_{j+1})|\ge \gamma$, we have 
$$|R_j(p_{j+1}-p_j)|\ge  |f(p_{j+1})| - \left| f(p_{j+1}) - \frac{f(p_j)}{|f(p_j)|} R_j(p_{j+1} - p_j) \right|\ge \frac{\gamma}{4}.$$
Hence $R_j(p_{j+1}-p_j)\neq0$, and the phase factor
$$ \rho_j=\frac{R_j(p_{j+1}-p_j)}{|R_j(p_{j+1}-p_j)|} $$
is well defined for $1\le j\le J-1$.

 \smallskip
\noindent\textbf{Step 3: Global reconstruction error.}
Set
$$ \mu:=\left\lceil\frac{J}{2}\right\rceil \ \ {\rm and } \ \ \tau_\mu:=\frac{f(p_\mu)}{|f(p_\mu)|}\in\mathbb T. $$ 
We estimate $f-\tau_\mu\mathcal R$ separately on the right, middle, and left parts of the interval $I=[p_1,p_J]$.

\medskip
\noindent
\emph{\textbf{The right interval $(p_{\mu+1},p_J]$.}}
Let
$$ t=p_j+\xi\in(p_{\mu+1},p_J],$$
where $ j\in\{\mu+1,\dots,J-1\}$ and $0<\xi\le p_{j+1}-p_j$.  By the definition of the reconstruction $\mathcal R$, we have 
 \begin{align}\label{eq:right-interval-first}%\label{eq46}
      &  | f(t) {} - \frac{f(p_{\mu})}{|f(p_{\mu})|} \mathcal{R}(t) |  = \left| \frac{\overline{f(p_{\mu})}}{|f(p_{\mu})|} f(p_j+\xi) -  \rho_{\mu} \cdots \rho_{j-1} R_j(\xi) \right| \notag\\
        {}& \le \left| \frac{\overline{f(p_{\mu})} }{|f(p_{\mu})| } f(p_j+\xi) - \rho_{\mu} \cdots \rho_{j-1} \frac{1}{\sqrt{A_j}} \overline{f(p_j)} f(p_j+\xi) \right| + \left| \frac{1}{\sqrt{A_j}} \overline{f(p_j)} f(p_j+\xi) -  R_j(\xi) \right| \notag\\
        %{}& = \left| \frac{\overline{f(p_{\mu})}}{|f(p_{\mu})|} - \rho_{\mu} \cdots \rho_{j-1} \frac{1}{\sqrt{A_j}} \overline{f(p_j)}  \right| |f(p_j+\xi)| + \left| \frac{1}{\sqrt{A_j}} \overline{f_\xi (p_j + \xi)} - R_j(\xi) \right| \notag\\
        {}& \le \left| \frac{\overline{f(p_{\mu})}}{|f(p_{\mu})|} - \rho_{\mu} \cdots \rho_{j-1} \frac{\overline{f(p_j)}}{\sqrt{A_j}}  \right| \|f\|_{L^{\infty} (I)} + \frac{1}{\sqrt{A_j}} E_j(\xi) \notag\\
        {}& < \left| \frac{\overline{f(p_{\mu})}}{|f(p_{\mu})|} - \rho_{\mu} \cdots \rho_{j-1} \frac{\overline{f(p_j)}}{\sqrt{A_j}}   \right| \|f\|_{L^{\infty} (I)} +  \frac{\varepsilon}{2\gamma},
    \end{align}
    where the last inequality follows from \eqref{Ej-uniform-small.eq} and $A_j > \gamma^2$.

It remains to estimate 
$$ \left| \frac{\overline{f(p_\mu)}}{|f(p_\mu)|} - \rho_\mu\cdots\rho_{j-1}\frac{\overline{f(p_j)}}{\sqrt{A_j}}
\right|.$$
As $$ \left| \frac{\overline{f(p_\mu)}}{|f(p_\mu)|} - \rho_\mu\cdots\rho_{j-1}\frac{\overline{f(p_j)}}{\sqrt{A_j}}\right|
        \le \left|\frac{\overline{f(p_\mu)}}{|f(p_\mu)|}- \rho_{\mu} \frac{\overline{f(p_{\mu+1})}}{|f(p_{\mu+1})|} \right| + \left| \frac{\overline{f(p_{\mu+1})}}{|f(p_{\mu+1})|} - \rho_{\mu+1} \cdots \rho_{j-1} \frac{\overline{f(p_j)} }{\sqrt{A_j}} \right|,$$  
 we obtain
\begin{align}\label{eq:right-phase-split}
\left| \frac{\overline{f(p_\mu)}}{|f(p_\mu)|} - \rho_\mu\cdots\rho_{j-1}\frac{\overline{f(p_j)}}{\sqrt{A_j}} \right|
   \le {}& \sum_{k=\mu}^{j-1} \left| \frac{\overline{f(p_k)} f(p_{k+1}) }{|f(p_k) f(p_{k+1})|} - \rho_k \right| + \left| \frac{\overline{f(p_j)}}{|f(p_j)|} - \frac{\overline{f(p_j)}}{\sqrt{A_j}} \right| \notag\\
= {}& \sum_{k=\mu}^{j-1} \left| \frac{\overline{f(p_k)}f(p_{k+1})}{|f(p_k)f(p_{k+1})|} -\rho_k\right|
+\frac{E_j(0)}{\sqrt{A_j}\bigl(|f(p_j)|+\sqrt{A_j}\bigr)}. 
\end{align}

For each $k\in\{\mu,\dots,j-1\}$,  Lemma \ref{prop1} yields 
\begin{align}\label{eq:right-phase-step}%{eq41}
\left| \frac{\overline{f(p_k)}f(p_{k+1})}{|f(p_k)f(p_{k+1})|} -\rho_k\right|
&= \left|\frac{\overline{f(p_k)}f(p_{k+1})}{|f(p_k)f(p_{k+1})|}-
\frac{\overline{\mathcal G_{p_k}(p_{k+1}-p_k)}}{|\mathcal G_{p_k}(p_{k+1}-p_k)|}\right| \notag\\
&\le2\frac{\bigl|\overline{f(p_k)}f(p_{k+1})-\overline{\mathcal G_{p_k}(p_{k+1}-p_k)}\bigr|}{|f(p_k)f(p_{k+1})|}
=\frac{2E_k(p_{k+1}-p_k)}{|f(p_k)f(p_{k+1})|},
\end{align}
where the first equality follows by  \eqref{eq:Aj-def}, \eqref{eq:Rj-def}, \eqref{eq:rhoj-def} and \eqref{eq:Gp-def}.  
Substituting \eqref{eq:right-phase-step} into \eqref{eq:right-phase-split}, and combine  with $|f(p_k)|\ge \gamma$, $
A_j\ge \gamma^2$, \eqref{J-bound.eq} and  \eqref{Ej-uniform-small.eq}, 
we obtain
\begin{align}\label{eq:right-phase-final}
\left| \frac{\overline{f(p_\mu)}}{|f(p_\mu)|} - \rho_\mu\cdots\rho_{j-1}\frac{\overline{f(p_j)}}{\sqrt{A_j}}\right|
&\le \sum_{k=\mu}^{j-1}\frac{2E_k(p_{k+1}-p_k)}{\gamma^2} + \frac{E_j(0)}{2\gamma^2} \notag\\
& \le \frac{J-1}{2} \frac{2}{\gamma^2} \frac{\varepsilon}{J-2}  + \frac{\varepsilon}{4 \gamma^2} \le  \frac{2\varepsilon}{\gamma^2} + \frac{\varepsilon}{4 \gamma^2}  =  \frac{9\varepsilon}{4 \gamma^2} . 
\end{align}
Combining \eqref{eq:right-interval-first} and \eqref{eq:right-phase-final}, we obtain
\begin{equation}\label{eq:right-interval-final}
\|f-\tau_\mu\mathcal R\|_{L^\infty((p_{\mu+1},p_J])} \le
\frac{9\varepsilon\|f\|_{L^\infty(I)}}{4\gamma^2} + \frac{\varepsilon}{2\gamma}.
\end{equation}

\medskip
\noindent
\emph{\textbf{The middle interval $[p_\mu,p_{\mu+1}]$.}}  
Let $ t=p_\mu+\xi\in[p_\mu,p_{\mu+1}], \ 
0\le \xi\le p_{\mu+1}-p_\mu$. 
No phase propagation is needed. Therefore, by the same estimate as in Step 2,
$$ |f(t)-\tau_\mu\mathcal R(t)| \le \frac{E_\mu(0)}{2\gamma^2}\|f\|_{L^\infty(I)}
+ \frac{E_\mu(\xi)}{\gamma}
\le \frac{\varepsilon}{4\gamma^2}\|f\|_{L^\infty(I)} + \frac{\varepsilon}{2\gamma}.$$ 
% Then 
% \begin{align*}
% |f(t)-\tau_\mu\mathcal R(t)| &=
% \left|\frac{\overline{f(p_\mu)}}{|f(p_\mu)|}f(p_\mu+\xi)-R_\mu(\xi)\right| \\
% &\le\frac{E_\mu(0)}{\sqrt{A_\mu}\bigl(|f(p_\mu)|+\sqrt{A_\mu}\bigr)}\|f\|_{L^\infty(I)}
% +\frac{1}{\sqrt{A_\mu}}E_\mu(\xi) \\
% &\le  \frac{\varepsilon}{2 \gamma^2}   \|f\|_{L^{\infty} (I)} + \frac{\varepsilon}{\gamma},
% \end{align*}
Therefore 
\begin{equation}\label{eq:middle-interval-final}
\|f-\tau_\mu\mathcal R\|_{L^\infty([p_\mu,p_{\mu+1}])}
\le\frac{ \varepsilon}{4 \gamma^2} \|f\|_{L^{\infty} (I)} + \frac{ \varepsilon}{2\gamma}.
\end{equation}

\medskip
\noindent
\emph{\textbf{The left interval $[p_1, p_{\mu})$.}} 
The same argument as for the right interval, with the phase propagating  from $p_\mu$ to the left,  gives
\begin{equation}\label{eq:left-interval-final}
\|f-\tau_\mu\mathcal R\|_{L^\infty([p_1,p_\mu))}
\le \frac{9\varepsilon\|f\|_{L^\infty(I)}}{4\gamma^2} + \frac{\varepsilon}{2\gamma}.
\end{equation}

Finally, combining \eqref{eq:right-interval-final}, \eqref{eq:middle-interval-final}, and \eqref{eq:left-interval-final}, we arrive at    
\begin{equation*}
    \min_{\tau\in\mathbb T}\|f-\tau\mathcal R\|_{L^\infty(I)} \le
  \|f-\tau_\mu\mathcal R\|_{L^\infty(I)} \le \frac{9\varepsilon}{4\gamma^2}\|f\|_{L^\infty(I)}+\frac{\varepsilon}{2\gamma} \le \frac{11\varepsilon}{4}\frac{\max\{1,\|f\|_{L^\infty(I)}\}}{\min\{\gamma,\gamma^2\}} = \frac{11}{4} \varepsilon \kappa_{f,\gamma},  
\end{equation*}
where $$ \kappa_{f,\gamma}:=\frac{\max\{1,\|f\|_{L^\infty(I)}\}}{\min\{\gamma,\gamma^2\}}.$$

If $J=2$, then  $p_1=-s$, $p_2=s$, $\mu=1$, and the phase propagation step is not needed. In this case,
$$\mathcal R(t)=R_1(t-p_1), \  t\in I=[p_1,p_2].$$ 
By the same argument used in Steps 1 and 2, we  know that 
$$|f(p_1)|\ge \gamma, \ \  A_1\ge \gamma^2,$$
 and $\rho_1$  is well defined.  
  Set
$$ \tau_1:=\frac{f(p_1)}{|f(p_1)|}\in\mathbb T.$$
For $t=p_1+\xi\in I, 0\le \xi\le p_2-p_1$, the middle-interval estimate gives
$$ |f(t)-\tau_1\mathcal R(t)| \le \frac{\varepsilon}{4\gamma^2}\|f\|_{L^\infty(I)}
+\frac{\varepsilon}{2\gamma}. $$ 
Hence 
$$ \min_{\tau\in\mathbb T}\|f-\tau\mathcal R\|_{L^\infty(I)} \le \frac{11}{4}\varepsilon\kappa_{f,\gamma}$$
also holds for $J=2$. The proof is complete.

\end{proof}

\begin{remark}[Noiseless reconstruction]
    Under the same setting as Theorem \ref{alg-stability.thm} with $\eta = 0$, the noise condition is vacuously satisfied. Hence, if the sampling lattice satisfies conditions \eqref{cond.N} - \eqref{cond.H}, then 
    \begin{align*}
        \min_{\tau\in\mathbb{T}}\|f - \tau\mathcal{R}\|_{L^\infty(I)} \le \frac{33}{16}\,\varepsilon\,\kappa_{f,\gamma}.
    \end{align*}
    In particular, by choosing $N$, $1/h$, and $H$ sufficiently large, the right-hand side can be made arbitrarily small, showing that Algorithm \ref{alg:stlct} achieves arbitrarily accurate reconstruction from exact phaseless STLCT samples on a sufficiently fine lattice.
\end{remark}

\begin{remark}[Comparison with \cite{grohs2024stable}]
    For the special case $\mathbf{A} = \begin{pmatrix} 0 & 1 \\ -1 & 0 \end{pmatrix}$, a similar result has been shown in \cite[Theorem 4.6]{grohs2024stable}. Note that our stability constant in \eqref{alg.robust.thm.eq2} depends linearly on $\varepsilon$, $\max\{1,\|f\|_{L^{\infty}(I)}\}$ and the reciprocal of $\min \{\gamma, \gamma^2\}$, whereas the stability constant in \cite[Eq.(48)]{grohs2024stable} depends linearly on $\varepsilon+\varepsilon^2$, $\max\{1, \|f\|_{L^{\infty}(I)}^2\}$ and the reciprocal of $\min \{\gamma, \gamma^5\}$. Especially when $\varepsilon$ and $\|f\|_{L^{\infty} (I)}$ are large, or $\gamma$ is small, our result improves the one in \cite{grohs2024stable}.
\end{remark}

\subsection{Numerical simulations}

We show the effectiveness of  Theorem \ref{alg-stability.thm} by applying Algorithm \ref{alg:stlct} to recover a complex-valued Gaussian shift-invariant signal from noisy phaseless STLCT samples. The experiment is set in a non-degenerate STLCT regime with $a \ne 0$, which goes beyond the Gabor case studied in \cite{grohs2024stable}.

\medskip
\noindent\textbf{Setup}: Let $\varphi(t) = e^{-t^2/(2\sigma^2)}$ be the Gaussian generator with $\sigma = 1/\sqrt{2\pi}$, and let
\[
f = \sum_{n=-n_0}^{n_0} c_n\,\varphi(\cdot - \beta n) \in V_\beta^\infty(\varphi)
\]
with step-size $\beta=1$, index bound $n_0=45$, and complex coefficients $c_n = c_n^{\rm re} + i\,c_n^{\rm im}$ with $c_n^{\rm re}, c_n^{\rm im}$ drawn i.i.d.\ uniformly from $[-6,6]$. The signal is considered on $I = [-s, s]$ with $s=40$. The LCT matrix $\mathbf{A} = \begin{pmatrix} 2 & 3 \\ 1 & 2 \end{pmatrix}$ ($a=2$, $b=3$, $c=1$, $d=2$, $ad-bc=1$) is chosen as a  non-degenerate instance. 
The noisy phaseless STLCT samples are
\begin{align}\label{gaussnoise.data.def}
		Y_{n,k} = |\mathcal{S}_{\check{\varphi}}^{\mathbf{A}} f (\frac{\beta}{2} n , hk)|^2 + \eta_{n, k}, \quad (n,k) \in \{ -N, \dots, N \} \times \{ -H, \dots, H \},
\end{align}
where $\eta_{n,k}\sim \mathcal{N}(0,\delta^2)$ is i.i.d.\ Gaussian noise.
Set the noise level $\delta = 0.001$ and the  algorithm inputs  $r = \frac32$ and $\tilde{\gamma} = \frac12$.

\medskip
\noindent\textbf{Sampling parameters}:  
The finite sampling lattice is chosen as 
$$X_{N,H,h} = \frac{\beta}{2}\{-N,\dots,N\} \times h\{-H,\dots,H\}$$
with the parameters calibrated according to the sufficient conditions
\eqref{cond.N}--\eqref{cond.H}.

\begin{itemize}

\item \emph{Time-domain truncation}: The leading term in \eqref{cond.N} is 
  $\lceil\frac{2}{\beta}(s+\frac{r}{2})\rceil = 82$. We take $N = 90$.

\item \emph{Frequency step}: We set $h = \frac{1}{16}$,
in accordance with  the lower bound on $1/h$ in \eqref{cond.h}.

\item \emph{Frequency truncation}: For this choice of $N$ and $h$, the leading term in \eqref{cond.H} is 
  $\lceil\frac{|a|\beta N}{2h}\rceil = 1440$. We take $H = 2000$.
\end{itemize}

Thus the reconstruction uses $181 \times 4001$ phaseless STLCT samples. 
The relatively large value of $H=2000$ reflects the dependence of the sampling condition \eqref{cond.H} on the STLCT parameter matrix 
$$\mathbf A=
\begin{pmatrix}
2 & 3\\
1 & 2
\end{pmatrix}.$$
 Indeed, the first term $\left\lceil \frac{|a|\beta N}{2h}\right\rceil$ in \eqref{cond.H} is driven by the parameter $a=2$, while the second term depends on $b=3$. By contrast, in the Gabor case
$$\mathbf A=
\begin{pmatrix}
0 & 1\\
-1 & 0
\end{pmatrix},$$
 with $a=0$ and $b=1$. Hence the first term in \eqref{cond.H} vanishes and the second term is significantly smaller. In that case, $H=75$ already suffices in the corresponding numerical experiment of \cite{grohs2024stable}.

\medskip
\noindent\textbf{Results}: Figure \ref{fig.pj} illustrates the selection of anchor points in Step 2 of Algorithm \ref{alg:stlct}. The approximation $\mathcal A$ defined in \eqref{mathcalA.alg.eq1}, computed from $181\times 4001$ noisy phaseless STLCT samples, is visually almost indistinguishable from the true squared magnitude $|f|^2$. This indicates that the noise level $\delta=0.001$ has only a minor effect on the magnitude estimation. The algorithm selects $57$ anchor points satisfying
$$\mathcal A(p_j)\ge \widetilde{\gamma}=\frac12$$
and
$$p_{j+1}-p_j\le r=\frac32, \ \  p_{j+2}-p_j\ge r. $$
As shown in the figure, these anchor points are well distributed over the whole interval and avoid the regions where $|f|^2$ is close to zero, in agreement with Condition $(\mathbf P)$.

Figure \ref{fig.recon} compares the original signal $f$ with the reconstruction $\mathcal R$ in \eqref{rec.alg.eq} after optimal global phase alignment. The reconstructed signal $\mathcal R$ agrees closely with $f$ in both its real and imaginary parts over most of the interval, which is consistent with the robustness estimate in Theorem \ref{alg-stability.thm}. These numerical simulations demonstrate the effectiveness and stability of the proposed algorithm for reconstructing Gaussian shift-invariant signals from finite noisy phaseless STLCT samples.

\begin{figure}[htp]
	\centering
	\includegraphics[width=0.8\textwidth]{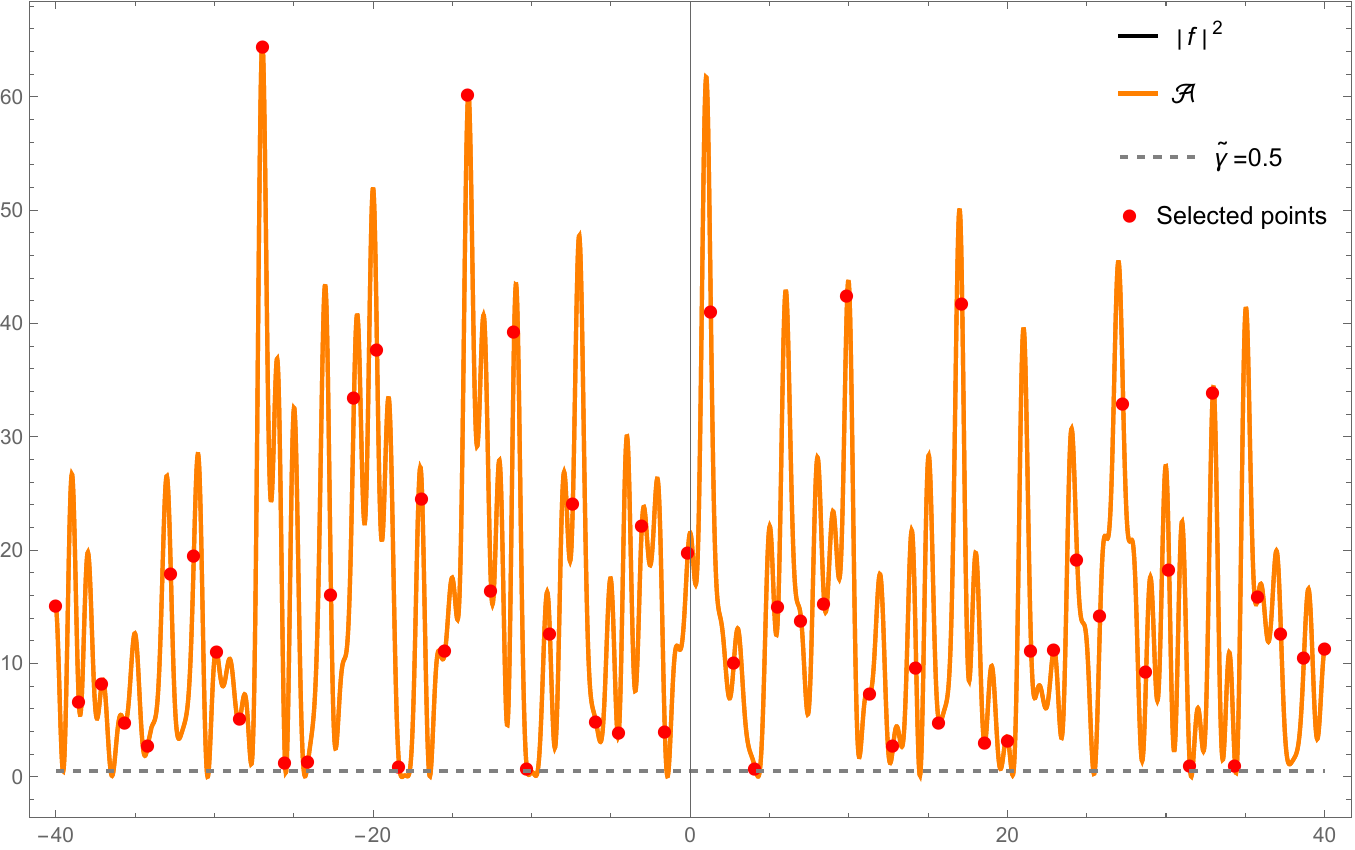}
	\caption{Anchor points selection for Algorithm \ref{alg:stlct}. The black curve shows $|f|^2$ and the orange curve shows the approximation $\mathcal A$ computed from noisy samples. The two curves nearly coincide. Red dots indicate the selected anchor points $(p_j, \mathcal A(p_j))$ satisfying $p_{j+1} - p_j \le r = \frac32$, $p_{j+2} - p_j \ge r$ and $\mathcal A(p_j) \ge \tilde{\gamma} = \frac12$.
   }
	\label{fig.pj}
	%\textit{Note: This is a sine curve.}
\end{figure}

\begin{figure}[htp]
   \centering
   \begin{minipage}{0.48\textwidth}
       \centering
       \includegraphics[width=\textwidth]{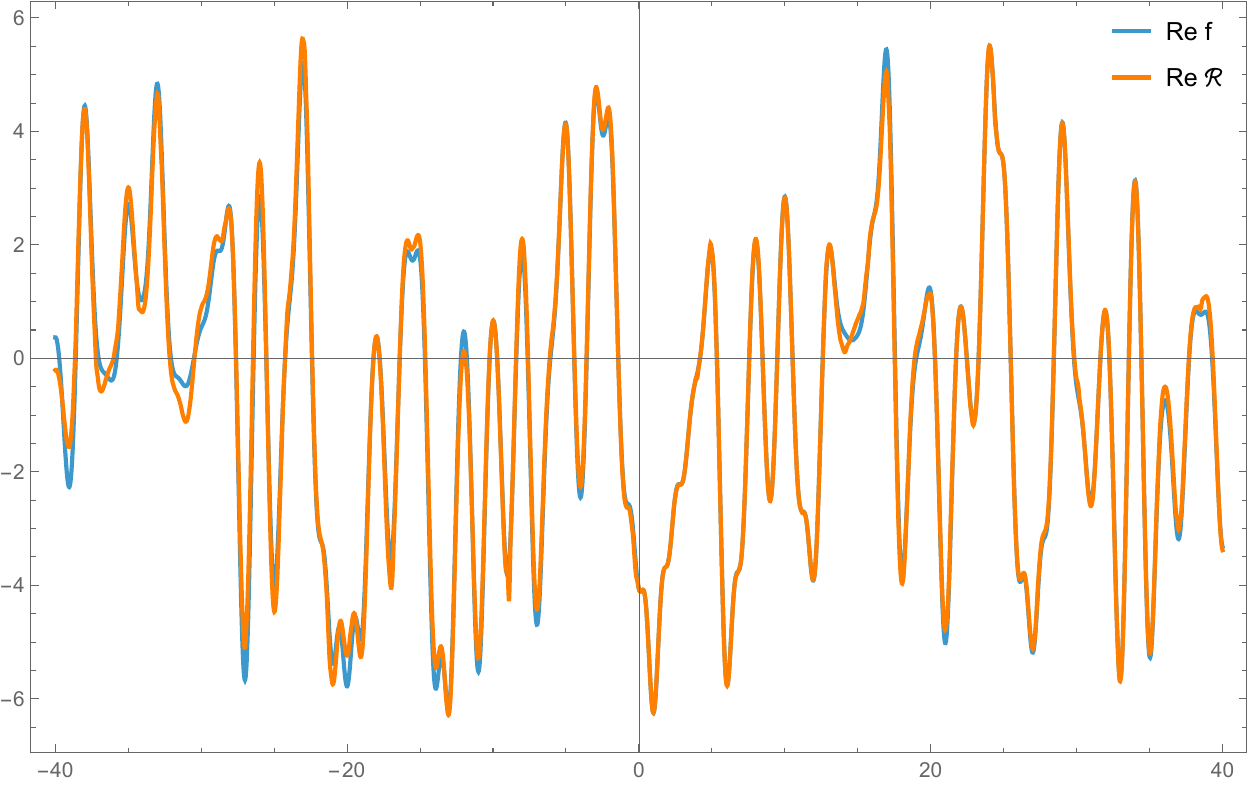}
       
   \end{minipage}
   \hfill
   \begin{minipage}{0.48\textwidth}
       \centering
       \includegraphics[width=\textwidth]{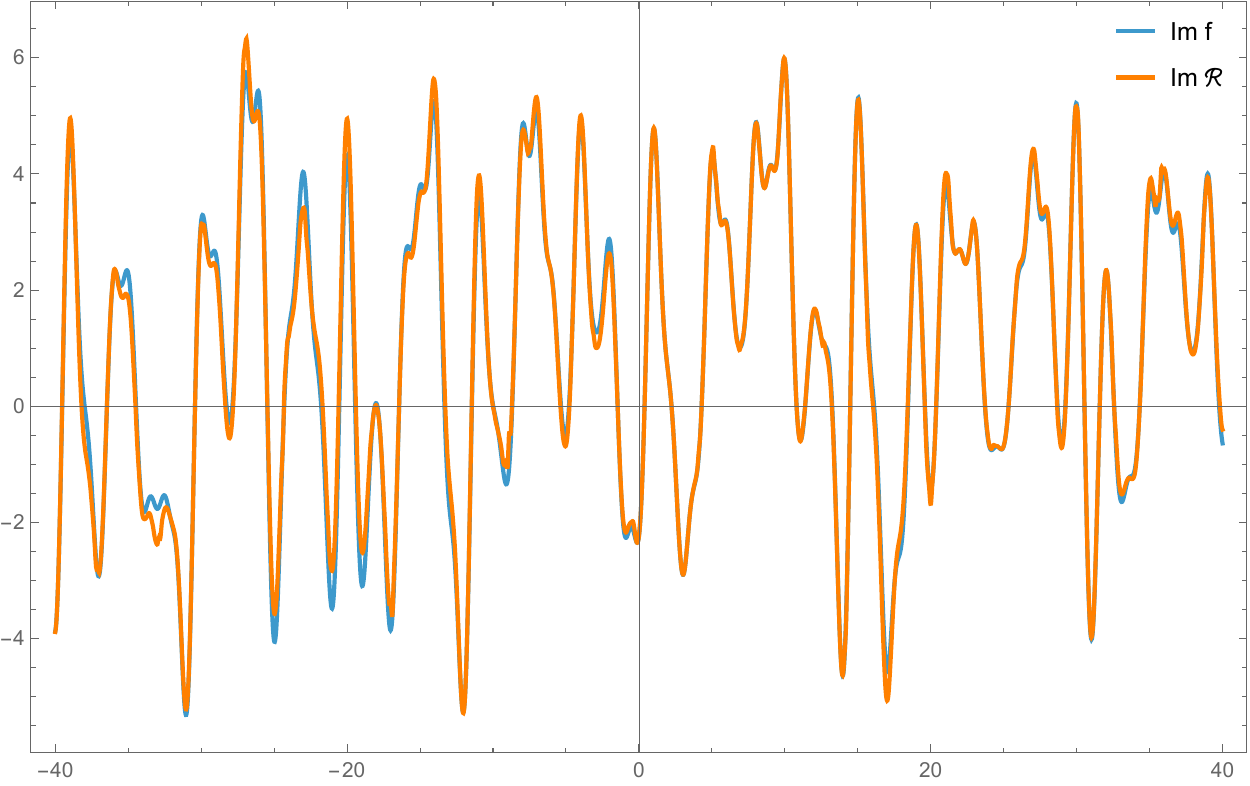}
       
   \end{minipage}
   \caption{Comparison of the original signal $f$ and the reconstructed signal $\mathcal{R}$ (up to a global phase). Left: real parts. Right: imaginary parts. The reconstruction is obtained from $181 \times 4001$ noisy phaseless STLCT samples with noise level $\delta = 0.001$.}
   \label{fig.recon}
\end{figure}

\begin{appendices}

\section{Proofs of some results}\label{app1}

\subsection{Proof of Proposition \ref{prop3}}\label{proof.prop3}

We  need the following lemma, which is a generalization of \cite[Proposition 9.4.2]{christensen2016introduction}.

\begin{lemma}\cite[Theorem 2.3]{grohs2024stable}\label{lem3}
	Let $\beta > 0$, $(h(\cdot-\beta n))_n$ be a Riesz basis for the shift-invariant space $V_{\beta}^2 (h)$ and $D= \{t\in \R: \Psi_{\beta}(t)\ne 0\}$, where $\Psi_{\beta}$ is the $\frac{1}{\beta}$-periodic map defined as in \eqref{psi.beta.eq}. Then $\tilde{h}= \theta$ is the dual generator of $h$, where
	\begin{align*}
		\hat{\theta}(\gamma) := 
		\begin{cases} 
			\frac{\beta \hat{h}(\gamma)}{\Psi_{\beta}(\gamma)} & \text{if } \gamma \in D, \\
			0 & \text{if } \gamma \notin D.
		\end{cases}
	\end{align*}
\end{lemma}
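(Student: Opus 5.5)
The plan is to verify directly that $\theta$ has the three defining properties of the dual generator: $\theta\in V_\beta^2(h)$, biorthogonality with the translates of $h$, and the Riesz basis property of its translates. Uniqueness then identifies $\theta$ with $\widetilde h$. Throughout I use the standard Fourier characterization of Riesz bases of integer translates (cf.\ \cite[Theorem 9.2.5]{christensen2016introduction}). It says that $(h(\cdot-\beta n))_n$ is a Riesz basis for $V_\beta^2(h)$ with bounds $A_2,B_2$ if and only if $\beta A_2^2\le \Psi_\beta(\gamma)\le \beta B_2^2$ for a.e.\ $\gamma$. Two consequences follow. First, $\R\setminus D$ is a null set. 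Second, the $\frac1\beta$-periodic function $m:=\beta\,\mathbf 1_D/\Psi_\beta$ is essentially bounded above and below.

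\textbf{Step 1: $\theta\in V_\beta^2(h)$.} Since $m$ is bounded and $\frac1\beta$-periodic, it has a Fourier series $m(\gamma)=\sum_k d_k e^{-2\pi i\beta k\gamma}$ with $(d_k)\in\ell^2(\Z)$. Hence $\hat\theta=m\hat h$ is the Fourier transform of $\sum_k d_k h(\cdot-\beta k)$. The series converges in $L^2$ by the Riesz basis upper bound. Thus $\theta\in V_\beta^2(h)$, and in particular $\theta\in L^2(\R)$.

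\textbf{Step 2: biorthogonality.} I apply Plancherel and then periodize the integral over $[0,\frac1\beta)$:
\begin{align*}
\langle \theta(\cdot-\beta n),h(\cdot-\beta k)\rangle
=\int_{\R}\hat\theta(\gamma)\overline{\hat h(\gamma)}e^{-2\pi i\beta(n-k)\gamma}\,d\gamma
=\int_0^{1/\beta} m(\gamma)\Psi_\beta(\gamma)\,e^{-2\pi i\beta(n-k)\gamma}\,d\gamma .
\end{align*}
The interchange of sum and integral is justified by Tonelli, since $m\Psi_\beta\le \beta$ and $\Psi_\beta$ is locally integrable. Now $m\Psi_\beta=\beta\mathbf 1_D=\beta$ a.e., so the integral equals $\beta\int_0^{1/\beta}e^{-2\pi i\beta(n-k)\gamma}d\gamma=\delta_{n,k}$.

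\textbf{Step 3: Riesz basis and uniqueness.} The periodization for $\theta$ is $\sum_j|\hat\theta(\gamma+j/\beta)|^2=m^2\Psi_\beta=\beta^2/\Psi_\beta$ a.e. This is bounded above and below by Step 0, so $(\theta(\cdot-\beta n))_n$ is a Riesz basis of its span. That span equals $V_\beta^2(h)$, because the multiplier $m$ is invertible: the same argument as in Step 1 with $1/m$ expresses $h$ in terms of translates of $\theta$. Finally, the dual generator is unique. If $\theta'\in V_\beta^2(h)$ also satisfies biorthogonality, then $\theta-\theta'$ is orthogonal to every $h(\cdot-\beta k)$, hence to all of $V_\beta^2(h)$, so $\theta=\theta'$. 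Therefore $\widetilde h=\theta$.

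The main obstacle is Step 0, namely translating the Riesz basis hypothesis into a.e.\ two-sided bounds on $\Psi_\beta$. This is what guarantees that $D$ has full measure and that $m$ is a bounded multiplier. I would simply invoke the cited standard characterization rather than reprove it; the remaining steps are routine Plancherel computations.
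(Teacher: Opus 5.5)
Your proof is correct and complete. The paper does not prove this lemma. It quotes it from Grohs--Liehr \cite[Theorem 2.3]{grohs2024stable} and describes it as a generalization of Christensen's formula for the dual generator of integer translates. The natural comparison is therefore with the route that citation suggests: reduce to $\Z$-translates by dilation. For $g=h(\beta\,\cdot)$ one has $\hat g(\gamma)=\beta^{-1}\hat h(\gamma/\beta)$ and $\sum_k|\hat g(\gamma+k)|^2=\beta^{-2}\Psi_\beta(\gamma/\beta)$. Applying the integer-lattice formula to $g$ and undoing the dilation gives $\widehat{\widetilde h}=\beta\hat h/\Psi_\beta$, so on that route the factor $\beta$ is pure rescaling bookkeeping. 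Your direct verification on $\beta\Z$ produces it transparently from $\beta\int_0^{1/\beta}e^{-2\pi i\beta(n-k)\gamma}\,d\gamma=\delta_{n,k}$. It uses only Plancherel, periodization and the Riesz characterization, at the cost of redoing what the dilation route takes from the literature. You also correctly note that the clause $\hat\theta=0$ off $D$ plays no role here, since $\R\setminus D$ is null under the Riesz hypothesis.

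Two minor points. First, the characterization you cite is stated for integer translates, so you are really using its $\beta\Z$ version. That version follows from the same periodization computation as your Step 2. Your squared bounds $\beta A_2^2\le\Psi_\beta\le\beta B_2^2$ are the correct ones for the non-squared Riesz bounds used in Section 2. Second, the Riesz-basis computation in Step 3 is unnecessary. The abstract dual Riesz basis of $(h(\cdot-\beta n))_n$ lies in $V_\beta^2(h)$ and is biorthogonal, so your own uniqueness argument already forces it to equal $(\theta(\cdot-\beta n))_n$.
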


We are ready to prove Proposition \ref{prop3}.

\begin{proof}[Proof of Proposition \ref{prop3}]
        The $\frac{1}{\beta}$-periodization of $\varphi$ with step-size $\beta$ is given by
	\begin{align*}
		\Psi_{\beta} (t) {}& = \sum_{n \in \Z} | \mathcal{F}\varphi (t + \frac{n}{\beta})|^2 = 2 \pi \sigma^2 \sum_{n \in \Z} e^{- \frac{4 \pi^2 \sigma^2}{\beta^2} (\beta t + n)^2 } \\
		{}& = \sqrt{ \pi} \sigma \beta \sum_{n \in \Z} e^{-\frac{\beta^2 n^2}{4 \sigma^2}} e^{2 \pi i n \beta t} = \sqrt{ \pi} \sigma \beta \vartheta_3(\pi \beta t , e^{-\frac{\beta^2}{4 \sigma^2}}),
	\end{align*}
        where the third equality follows from Poisson's summation formula. Using the properties of the Jacobi theta function of the third kind, we have $\Psi_{\beta} (t)$ is minimal at point $t = \frac{1}{2 \beta}$ and maximal at point $t = 0$. Thus, $(\varphi(\cdot - \beta n))_n$ forms a Riesz basis for $V_{\beta}^{2}(\varphi)$.
        %with optimal lower bound $\sqrt{2 \pi} \sigma \vartheta_3(\frac{\pi}{2} , e^{-\frac{\beta^2}{2 \sigma^2}})$ and optimal upper bound $\sqrt{2 \pi} \sigma \vartheta_3(0 , e^{-\frac{\beta^2}{2 \sigma^2}})$.
        
	The $\frac{2}{\beta}$-periodization of $\varphi_\xi$ with step-size $\frac{\beta}{2}$ is given by
	\begin{align*}
		\Psi_{\beta/2}(t) {}& = \sum_{n\in \Z} |\mathcal{F} \varphi_\xi(t + \frac{2n}{\beta})|^2 = \pi \sigma^2 e^{-\frac{\xi^2}{2\sigma^2}} \sum_{n\in \Z} e^{-\frac{8\pi^2 \sigma^2}{\beta^2} (\frac{\beta t}{2} + n)^2} \\
		{}& = \frac{\sigma \beta \sqrt{\pi}}{2 \sqrt{2}} e^{-\frac{\xi^2}{2\sigma^2}} \sum_{n\in \Z} e^{-\frac{\beta^2 n^2}{8\sigma^2}} e^{\pi in \beta t} = \frac{\sigma \beta \sqrt{\pi}}{2\sqrt{2}} e^{-\frac{\xi^2}{2\sigma^2}} \vartheta_3(\frac{\pi \beta t}{2},e^{-\frac{\beta^2}{8\sigma^2}}),
	\end{align*}
        where the third equality follows from Poisson's summation formula. Using the properties of the Jacobi theta function of the third kind, we have $\Psi_{\beta/2}(t)$ is minimal at point $t = \frac{1}{\beta}$ and maximal at point $t = 0$. Thus, $(\varphi_\xi(\cdot - \frac{\beta}{2} n))_n$ forms a Riesz basis for $V_{\beta/2}^{2}(\varphi_\xi)$.
        %with optimal lower bound $\sigma \sqrt{\pi} e^{-\frac{\xi^2}{2\sigma^2}} \vartheta_3(\frac{\pi}{2},e^{-\frac{\beta^2}{4\sigma^2}})$ and optimal upper bound $\sigma \sqrt{\pi} e^{-\frac{\xi^2}{2\sigma^2}} \vartheta_3(0 , e^{-\frac{\beta^2}{4\sigma^2}})$.

        By $\mathcal{F} \varphi_\xi (t) = \sqrt{\pi} \sigma e^{-\frac{\xi^2}{4 \sigma^2}} e^{- \pi^2 \sigma^2 t^2} e^{-\pi i t\xi}$ and Lemma \ref{lem3}, we obtain that the dual generator of $\varphi_\xi$ is
	\begin{align*}
		\widetilde{\varphi_\xi} (t) = \sqrt{2}e^{\frac{\xi^2}{4\sigma^2}} \mathcal{F}^{-1} \frac{e^{-\pi^2 \sigma^2 t^2} e^{-\pi i t\xi}}{\vartheta_3(\frac{\beta}{2}\pi t, e^{-\frac{\beta^2}{8\sigma^2}})}
        = \sqrt{2} e^{\frac{\xi^2}{4\sigma^2}} T_{\frac{\xi}{2}} \mathcal{F}^{-1} \frac{e^{-\pi^2 \sigma^2 t^2}}{\vartheta_3(\frac{\beta}{2}\pi t, e^{-\frac{\beta^2}{8\sigma^2}})} = \sqrt{2} e^{\frac{\xi^2}{4\sigma^2}} T_{\frac{\xi}{2}} \mathcal{F}^{-1} \Lambda(t).
	\end{align*}
    \end{proof}

\subsection{Proof of Lemma \ref{lem4}}\label{proof.lem4} 
	Fix $x\in \R$. Write  $f^{\omega,x}(t)= f(t)\overline{\omega(t-x)} e^{i\pi \frac{a}{b} t^2}$. As 
    \begin{align*}
        \int_{\R} |f^{\check{\omega},x} (t)|^2 dt {}& = \int_{\R} | f(t) e^{-i\pi \frac{a}{b} (t-x)^2} \overline{\omega(t-x)} e^{i\pi \frac{a}{b} t^2} |^2 dt \\
        {}& = \int_{\R} | f(t) \overline{\omega(t-x)} |^2 dt \le \|f\|_{\infty}^2 \int_{\R} |\omega(t-x)|^2 dt < \infty,
    \end{align*}
%   then $f^{\check{\omega},x}, \mathcal{F}f^{\check{\omega},x}\in L^2(\R)$, and then $S_{\check{\omega}}^{\textbf{A}} f(x,\cdot)\in L^2(\R)$ by \eqref{eq59}. %$\| \mathcal{F}f^{\check{g},x} \|_{L^2 (\R)} = \| f^{\check{g},x} \|_{L^2 (\R)} < \infty$. 
%   And
%    \begin{align*}
%        \int_{\R} | S_{\check{g}}^{\textbf{A}} f(x,\xi) |^2 dw {}& = \int_{\R} | \frac{1}{\sqrt{i b}} e^{i\pi \frac{d}{b} \xi^2} \mathcal{F} f^{\check{g},x} (\frac{\xi}{b}) |^2 dw \\
%        {}& = \int_{\R} | \mathcal{F} f^{\check{g},x} (\frac{\xi}{b}) |^2 d\frac{\xi}{b} = \| \mathcal{F}f^{\check{g},x} \|_{L^2 (\R)}^2 < \infty,
%    \end{align*}
%    where the first inequality follows from \eqref{eq59}.
%    Then $|S_{\check{\omega}}^{\textbf{A}} f(x,\cdot)|^2\in L^1(\R)$ and the Fourier transform of map $t\mapsto |S_{\check{\omega}}^{\textbf{A}} f(x,t)|^2$ is continuous and pointwise defined. 
   then $f^{\check{\omega},x}\in L^2(\R)$, and hence $\mathcal{F}f^{\check{\omega},x}\in L^2(\R)$ by Plancherel's theorem. It follows from \eqref{eq59} that $ S_{\check{\omega}}^{\mathbf A}f(x,\cdot)\in L^2(\R)$.  Consequently, $ |S_{\check{\omega}}^{\mathbf A}f(x,\cdot)|^2\in L^1(\R)$, 
and therefore the Fourier transform of the function
$$ t\mapsto |S_{\check{\omega}}^{\mathbf A}f(x,t)|^2 $$
is well defined, continuous, and pointwise defined on $\R$.
    
    By \eqref{eq59} and $\mathcal{F}^{-1} \overline{f} (\xi) = \overline{\mathcal{F} f(\xi)}$, we have
	\begin{align}\label{sgf.def}
		|S_{\check{\omega}}^{\textbf{A}} f(x,b\xi)|^2 {}& = \frac{1}{b} (\mathcal{F}f^{\check{\omega},x})(\xi) \overline{(\mathcal{F}f^{\check{\omega},x})(\xi)}= \frac{1}{b} (\mathcal{F}f^{\check{\omega},x})(\xi) (\mathcal{F}^{-1} \overline{f^{\check{\omega},x}}) (\xi)\nonumber\\  
{}&= \frac{1}{b} \mathcal{F}^{-1} ( \mathcal{F}^2 f^{\check{\omega},x}\ast \overline{f^{\check{\omega},x}} ) (\xi),
	\end{align}
    where the last equality follows from the convolution theorem.
	The convolution $\mathcal{F}^2 f^{\check{\omega},x}\ast \overline{f^{\check{\omega},x}}$ can be written as
	\begin{align}\label{conv.eq.1}
		\mathcal{F}^2 f^{\check{\omega},x}\ast \overline{f^{\check{\omega},x}} (\xi) {}& = \int_{\R} \mathcal{F}^2 f^{\check{\omega},x}(t) \overline{f^{\check{\omega},x}(\xi-t)} dt
		= \int_{\R} f^{\check{\omega},x}(-t) \overline{f^{\check{\omega},x}(\xi-t)} dt\nonumber \\ 
		{}& \overset{t=\xi-t}{=} \int_{\R} f^{\check{\omega},x}(t-\xi) \overline{f^{\check{\omega},x}(t)} dt 
		 = e^{\frac{-2\pi iax\xi}{b}} \int_{\R} f_{\xi}(t) \overline{T_x \omega_{\xi} (t)} dt\nonumber \\ 
		{}& = e^{\frac{-2\pi iax\xi}{b}} \langle f_{\xi}, T_x \omega_{\xi} \rangle.
	\end{align}
	Combine \eqref{sgf.def} and \eqref{conv.eq.1}, we have 
	 \begin{equation*}\label{eq1}
		e^{\frac{-2\pi iax\xi}{b}} \langle f_\xi, T_x \omega_\xi \rangle=(\mathcal{F} |S_{\check{\omega}}^{\textbf{A}} f(x,\cdot)|^2)(\frac{\xi}{b}), 
	\end{equation*}
	which completes the proof.

\subsection{Proof of Lemma \ref{prop1}}\label{proof.prop1}
Since $|z_1|\neq 0$ and $|z_2|\neq 0$,  we have   \begin{align*}
        |z_1 - z_2| = \left| |z_1| \frac{z_1}{|z_1|} - |z_2| \frac{z_2}{|z_2|} \right|  {}& \ge \left| |z_1| \frac{z_1}{|z_1|} - |z_1| \frac{z_2}{|z_2|} \right| - \left| |z_1| \frac{z_2}{|z_2|} - |z_2| \frac{z_2}{|z_2|} \right| \\
        {}& = |z_1| \left| \frac{z_1}{|z_1|} - \frac{z_2}{|z_2|} \right| - \big| |z_1| - |z_2| \big|.
    \end{align*}
Therefore, 
    \begin{align*}
        \left| \frac{z_1}{|z_1|} - \frac{z_2}{|z_2|} \right|  \le \frac{1}{|z_1|} (|z_1 - z_2| + \big| |z_1| - |z_2| \big|) 
         \le 2\frac{|z_1 - z_2|}{|z_1|},
    \end{align*}
    where the last inequality follows by the triangle inequality.

\subsection{Proof of Lemma~\ref{lem:entire-stlct-magnitude}}\label{proof.lem.entire-stlct-magnitude} 
For every $(x,t)\in\R^2$, we have
\begin{align*}
S_{\check{\varphi}}^{\mathbf A}f(x,t) &= \frac{1}{\sqrt{ib}}e^{i\pi \frac{d}{b}t^2}
\int_{\R} f(\xi)\overline{\check{\varphi}(\xi-x)}e^{i\pi\frac{a}{b}\xi^2}e^{-2\pi i\frac{t}{b}\xi}\,d\xi \\
&=\frac{1}{\sqrt{ib}}e^{i\pi \frac{d}{b}t^2}\int_{\R}
\sum_{k\in\Z}c_k \varphi(\xi-\beta k)e^{-i\pi\frac{a}{b}(\xi-x)^2}\varphi(\xi-x)e^{i\pi\frac{a}{b}\xi^2}
e^{-2\pi i\frac{t}{b}\xi}\,d\xi \\
&=\frac{1}{\sqrt{ib}}e^{i\pi \frac{d}{b}t^2}e^{-i\pi\frac{a}{b}x^2}\sum_{k\in\Z}c_k e^{-\frac{(x-\beta k)^2}{4\sigma^2}}\int_{\R}e^{-\frac{(\xi-\frac{x+\beta k}{2})^2}{\sigma^2}}
e^{-2\pi i\frac{t-ax}{b}\xi}\,d\xi.
\end{align*}
Evaluating the Gaussian Fourier integral yields
\begin{align*}
S_{\check{\varphi}}^{\mathbf A}f(x,t) =
\frac{\sigma\sqrt{\pi}}{\sqrt{ib}}e^{i\pi\frac{d}{b}t^2}e^{-i\pi\frac{a}{b}x^2}
e^{-\frac{\pi^2\sigma^2}{b^2}(t-ax)^2}\sum_{k\in\Z}c_k e^{-\frac{(x-\beta k)^2}{4\sigma^2}}
e^{-\pi i\frac{(x+\beta k)(t-ax)}{b}}.
\end{align*}
Hence
\begin{align*}
|S_{\check{\varphi}}^{\mathbf A}f(x,t)|^2&= \frac{\pi\sigma^2}{b}
e^{-\frac{2\pi^2\sigma^2}{b^2}(t-ax)^2}\sum_{n\in\Z}\sum_{k\in\Z}c_n\overline{c_k}
e^{-\frac{(x-\beta n)^2}{4\sigma^2}}e^{-\frac{(x-\beta k)^2}{4\sigma^2}}e^{\pi i\frac{\beta (k-n)(t-ax)}{b}}.
\end{align*}
Setting $\ell=k-n$, we obtain
\begin{align*}
|S_{\check{\varphi}}^{\mathbf A}f(x,t)|^2&=\frac{\pi\sigma^2}{b}e^{-\frac{2\pi^2\sigma^2}{b^2}(t-ax)^2}\sum_{\ell\in\Z}\left(\sum_{n\in\Z}c_n\overline{c_{n+\ell}}e^{-\frac{(x-\beta n)^2}{4\sigma^2}}
e^{-\frac{(x-\beta (n+\ell))^2}{4\sigma^2}}\right)e^{-\pi i\frac{\beta ax}{b}\ell}e^{\pi i\frac{\beta t}{b}\ell}.\end{align*}
% Define
% \begin{equation}\label{eq:breve-r-def-proof}
% \breve r_\ell^x:=e^{-\pi i\frac{\beta ax}{b}\ell}\sum_{n\in\Z}c_n\overline{c_{n+\ell}}e^{-\frac{(x-\beta n)^2}{4\sigma^2}}e^{-\frac{(x-\beta (n+\ell))^2}{4\sigma^2}}, \ \ell\in\Z.
% \end{equation}
Then
$$ |S_{\check{\varphi}}^{\mathbf A}f(x,t)|^2=\frac{\pi\sigma^2}{b}e^{-\frac{2\pi^2\sigma^2}{b^2}(t-ax)^2}V_x(t),$$
where 
\begin{equation}\label{V_x.def.appx}
V_x(t):=\sum_{\ell\in\Z}\breve r_\ell^x e^{\pi i\frac{\beta\ell}{b}t}\end{equation}
with  coefficients $\breve r_\ell^x$ defined in \eqref{trig.coef.lem.def.eq1}.

It remains to estimate the coefficients $\breve r_\ell^x$. By the definition of  $\breve r_\ell^x$ in \eqref{trig.coef.lem.def.eq1}, we have 
\begin{align*}
|\breve r_\ell^x| &\le\|c\|_\infty^2 \sum_{n\in\Z}e^{-\frac{(x-\beta n)^2}{4\sigma^2}}e^{-\frac{(x-\beta(n+\ell))^2}{4\sigma^2}}.
\end{align*}
Using
$ (x-\beta n)^2+(x-\beta(n+\ell))^2 =
2\Bigl(x-\beta n-\frac{\beta\ell}{2}\Bigr)^2+\frac{\beta^2\ell^2}{2}$, 
we obtain
\begin{align*}
|\breve r_\ell^x| &\le \|c\|_\infty^2 e^{-\frac{\beta^2\ell^2}{8\sigma^2}}
\sum_{n\in\Z} e^{-\frac{\beta^2}{2\sigma^2}\left(n+\frac{\ell}{2}-\frac{x}{\beta}\right)^2}.
\end{align*}
Since the Gaussian is bounded by $1$ and integrable, we have
$$ \sup_{\alpha\in\R}\sum_{n\in\Z}e^{-\frac{\beta^2}{2\sigma^2}(n+\alpha)^2}
\le 1+\int_{\R}e^{-\frac{\beta^2}{2\sigma^2}u^2} du = 1+\frac{\sigma\sqrt{2\pi}}{\beta}.$$ 
Therefore,
$$ |\breve r_\ell^x| \le \|c\|_\infty^2 \left(1+\frac{\sigma\sqrt{2\pi}}{\beta}\right) e^{-\frac{\beta^2\ell^2}{8\sigma^2}}, \ell\in\Z.$$

Finally, let $K\subset\C$ be compact and set $
M_K:=\sup_{z\in K}|\Im z|<\infty$. 
Then for every $z\in K$,
\begin{align*}
\sum_{\ell\in\Z}\left|\breve r_\ell^x e^{\pi i\frac{\beta\ell}{b}z}\right|
&\le \|c\|_\infty^2 \left(1+\frac{\sigma\sqrt{2\pi}}{\beta}\right) \sum_{\ell\in\Z}
e^{-\frac{\beta^2\ell^2}{8\sigma^2}} e^{\pi \frac{\beta}{b}|\ell|\,|\Im z|} \\
&\le \|c\|_\infty^2 \left(1+\frac{\sigma\sqrt{2\pi}}{\beta}\right) \sum_{\ell\in\Z}
e^{-\frac{\beta^2\ell^2}{8\sigma^2}}e^{\pi \frac{\beta}{b}|\ell|\,M_K} <\infty.
\end{align*}
Thus the series  $V_x(z)$ defined in \eqref{V_x.def.appx} 
converges absolutely and uniformly on compact subsets of $\C$. Since each summand is entire, $V_x$ is entire. Consequently, the function
$$  z\longmapsto \frac{\pi\sigma^2}{b} e^{-\frac{2\pi^2\sigma^2}{b^2}(z-ax)^2} V_x(z)$$
is entire on $\C$ and coincides with
$ t\longmapsto |S_{\check{\varphi}}^{\mathbf A}f(x,t)|^2$ 
for real $t$. Hence $t\mapsto |S_{\check{\varphi}}^{\mathbf A}f(x,t)|^2$ extends to an entire function on $\C$.

\subsection{Proof of Theorem~\ref{local-discrete-error.thm}}\label{proof.local-discrete-error.thm}

Let $p\in[-s,s]$ and $\xi\in[-r,r]$. For $x\in\R$, define
$$W_{x,\xi}(z):=M_f(x,z)e^{-\frac{2\pi i\xi z}{b}},  \ z\in\C,$$
where $M_f(x,z)=|S_{\check{\varphi}}^{\mathbf A}f(x,z)|^2$. 

\smallskip 
We divide the proof into three steps.

\smallskip
\noindent
\textbf{Step 1: Discretization error for the Fourier-type integral.}
Let $x=\frac{\beta}{2}n$ with $n\in[-N,N]\cap\Z$, and write $z=t+iy$. By Lemma~\ref{lem:entire-stlct-magnitude}, we have 
$$M_f(x,z) = \frac{\pi\sigma^2}{b} e^{-\frac{2\pi^2\sigma^2}{b^2}(z-ax)^2} V_x(z), $$
where $V_x(z)=\sum_{\ell\in\Z}\breve r_\ell^x e^{\pi i\frac{\beta\ell}{b}z}$ with coefficients $\breve r_\ell^x$ given in  \eqref{trig.coef.lem.def.eq1} satisfying 
$$ |\breve r_\ell^x| \le \|c\|_\infty^2 \left(1+\frac{\sigma\sqrt{2\pi}}{\beta}\right)e^{-\frac{\beta^2\ell^2}{8\sigma^2}}.$$

Then 
\begin{align}\label{eq:Vx-bound-proof}
|V_x(t+iy)| &\le
\sum_{\ell\in\Z}|\breve r_\ell^x|e^{-\pi\frac{\beta\ell}{b}y} \le \|c\|_\infty^2\big(1+\frac{\sigma\sqrt{2\pi}}{\beta}\big)\sum_{\ell\in\Z}e^{-\frac{\beta^2\ell^2}{8\sigma^2}}
e^{-\pi\frac{\beta\ell}{b}y}\notag\\
	{}& = \|c\|_{\infty}^2 (1 + \frac{\sigma}{\beta} \sqrt{2 \pi}) e^{\frac{2 \pi^2 \sigma^2 y^2}{b^2}} \sum_{\ell \in \Z} e^{- \frac{\beta^2}{8 \sigma^2} (\ell + \frac{4 \pi \sigma^2 y}{b \beta})^2} \notag\\ % \le \|c\|_{\infty}^2 (1 + \frac{\sigma}{\beta} \sqrt{2 \pi}) e^{\frac{2 \pi^2 \sigma^2 y^2}{b^2}} \sum_{\ell \in \Z} e^{- \frac{\beta^2 \ell^2}{8 \sigma^2}} \notag\\
		{}& \le \|c\|_{\infty}^2 (1 + \frac{\sigma}{\beta} \sqrt{2 \pi}) e^{\frac{2 \pi^2 \sigma^2 y^2}{b^2}} \big(1 + \int_{\R} e^{- \frac{\beta^2 s^2}{8 \sigma^2}} ds\big) \notag\\ 
		{}&\le \|c\|_{\infty}^2 \big(1 + \frac{2 \sqrt{2 \pi}\sigma}{\beta} \big)^2 e^{\frac{2 \pi^2 \sigma^2 y^2}{b^2}}.
\end{align}
Therefore,
\begin{eqnarray}\label{eq:Wxy-bound-proof}
|W_{x,\xi}(t+iy)| &=& |M_f(x,t+iy)|\,e^{\frac{2\pi \xi y}{b}}  \notag\\
&\le&
\frac{\pi\sigma^2}{b}\|c\|_\infty^2\left(1+\frac{2\sqrt{2\pi}\sigma}{\beta}\right)^2
e^{-\frac{2\pi^2\sigma^2}{b^2}(t-ax)^2}e^{\frac{4\pi^2\sigma^2 y^2}{b^2}}e^{\frac{2\pi\xi y}{b}}.
\end{eqnarray}
Now consider the strip $U_{\zeta} = \{z \in \C : |\Im(z)| < \zeta\}$, where $\zeta=\frac{b}{2\pi\sigma}$. 
Since \(W_{x,\xi}\) is entire, it is analytic in $U_\zeta$. Moreover, 
\eqref{eq:Wxy-bound-proof} shows that $W_{x,\xi}(z)\to 0$ uniformly as
$|z|\to\infty$ in $U_\zeta$. Also, for $|y|<\zeta$, 
\begin{align}\label{eq:Wxy-integ-bound-proof}
\int_{\R}|W_{x,\xi}(t+iy)|dt &\le \frac{\pi\sigma^2}{b} \|c\|_\infty^2\left(1+\frac{2\sqrt{2\pi}\sigma}{\beta}\right)^2 e^{\frac{4\pi^2\sigma^2 y^2}{b^2}} e^{\frac{2\pi\xi y}{b}}
\int_{\R}e^{-\frac{2\pi^2\sigma^2}{b^2}(t-ax)^2}\,dt \notag\\
&= \sqrt{\frac{\pi}{2}} \sigma \|c\|_\infty^2 \left(1+\frac{2\sqrt{2\pi}\sigma}{\beta}\right)^2
e^{\frac{4\pi^2\sigma^2 y^2}{b^2}}e^{\frac{2\pi\xi y}{b}}.
\end{align}
Since $|y|<\zeta$, we have
$ \frac{4\pi^2\sigma^2 y^2}{b^2}\le 1$ and  $\frac{2\pi\xi y}{b}\le \frac{|\xi|}{\sigma}$.
Hence, 
$$\int_{\R}|W_{x,\xi}(t+iy)|dt \le \sqrt{\frac{\pi}{2}} \sigma \|c\|_\infty^2 \left(1+\frac{2\sqrt{2\pi}\sigma}{\beta}\right)^2 e^{1+\frac{|\xi|}{\sigma}}.$$
Applying Lemma \ref{lem:trapezoidal} gives
\begin{align}\label{eq:disc-error-proof}
\left| I_h^\infty(W_{x,\xi})-\int_{\R}W_{x,\xi}(t) dt \right| \le {}&
\sqrt{2\pi}\sigma \|c\|_\infty^2\left(1+\frac{2\sqrt{2\pi}\sigma}{\beta}\right)^2
\frac{e^{1+\frac{|\xi|}{\sigma}}}{e^{\frac{b}{\sigma h}}-1}.
\end{align}

\smallskip
\noindent
\textbf{Step 2: Cut-off error in the trapezoidal rule.}
We now estimate $ |I_h^\infty(W_{x,\xi})-I_h^H(W_{x,\xi})|$. 
By \eqref{eq:Wxy-bound-proof} with $y=0$, we have 
\begin{align}\label{eq:cutoff-start-proof}
|I_h^\infty(W_{x,\xi})-I_h^H(W_{x,\xi})| &\le
h\frac{\pi\sigma^2}{b} \|c\|_\infty^2 \left(1+\frac{2\sqrt{2\pi}\sigma}{\beta}\right)^2
\sum_{k\in([-H,H]\cap\Z)^c}e^{-\frac{2\pi^2\sigma^2}{b^2}(hk-ax)^2}.
\end{align}
Since $x=\frac{\beta}{2}n$ with $|n|\le N$ and $H=\left\lceil \frac{|a|\beta N}{2h}\right\rceil+q$, 
we have for every $k\in([-H,H]\cap\Z)^c$, 
$$ |hk-ax|= \left|hk-\frac{a\beta}{2}n\right|\ge h(q+|k|-H).$$
Therefore,
\begin{align*}
\sum_{k\in([-H,H]\cap\Z)^c} \hskip-.2in e^{-\frac{2\pi^2\sigma^2}{b^2}(hk-ax)^2}
\le 2 \hskip-.05in\sum_{j=q+1}^\infty e^{-\frac{2\pi^2\sigma^2 h^2}{b^2}j^2}  \le
2 \hskip-.05in\int_q^\infty e^{-\frac{2\pi^2\sigma^2 h^2}{b^2}t^2}\,dt \le
\frac{b}{\sigma h\sqrt{2\pi}} e^{-\frac{2\pi^2\sigma^2 h^2 q^2}{b^2}},
\end{align*}
where  the last inequality follows from the following inequality
    \begin{align*}
        \int_q^{\infty} e^{-p t^{2}}\;dt
	\;\le\; \sqrt{\frac{\pi}{4p}}e^{-p q^{2}}, \ p>0,q\ge0.
    \end{align*}
Substituting this into \eqref{eq:cutoff-start-proof}, we get
\begin{align}\label{eq:cutoff-error-proof}
|I_h^\infty(W_{x,\xi})-I_h^H(W_{x,\xi})|
\le {}& \sqrt{\frac{\pi}{2}} \sigma  \|c\|_\infty^2 \left(1+\frac{2\sqrt{2\pi}\sigma}{\beta}\right)^2
e^{-\frac{2\pi^2\sigma^2 h^2 q^2}{b^2}}.
\end{align}
Combining \eqref{eq:disc-error-proof} and \eqref{eq:cutoff-error-proof}, we obtain
\begin{align}\label{eq:quad-error-proof}
\left| \int_{\R}W_{x,\xi}(t)\,dt-I_h^H(W_{x,\xi}) \right|
\le {}& \sqrt{\frac{\pi}{2}} \sigma \|c\|_\infty^2 \left(1+\frac{2\sqrt{2\pi}\sigma}{\beta}\right)^2
\left(\frac{2e^{1+\frac{|\xi|}{\sigma}}}{e^{\frac{b}{\sigma h}}-1}
+e^{-\frac{2\pi^2\sigma^2 h^2 q^2}{b^2}}\right).
\end{align}

\smallskip
\noindent
\textbf{Step 3: Estimate of \( |f_\xi(p+\xi)-\mathcal G_p(\xi)| \).}
Define 
$$ F_1(n,\xi) := \int_{\R} M_f \left(\frac{\beta n}{2},t\right)e^{-\frac{2\pi i\xi t}{b}}dt$$ 
and 
$$ F_2(n,\xi) :=h\sum_{k=-H}^{H}Y_{n,k}e^{-\frac{2\pi i\xi hk}{b}}.$$ 

By \eqref{eq:continuous-fw} and \eqref{eq:Gp-def}, we have 
\begin{align}
\hskip-.1in|f_\xi(p+\xi)-\mathcal G_p(\xi)|& \le \hskip-.3in \sum_{n \in ([-N , N] \cap \Z)^c} \hskip-.2in | F_1(n , \xi) T_{\frac{\beta}{2} n} \widetilde{\varphi_\xi} (p + \xi)| 
 +  \hskip-.1in\sum_{n = -N}^{N} | (F_1(n , \xi)\hskip-.05in -\hskip-.05in F_2(n , \xi)) T_{\frac{\beta}{2} n} \widetilde{\varphi_\xi} (p + \xi)| \notag\\
         &=: \varepsilon_1 + \varepsilon_2. 
\label{eq:error-split-proof}
\end{align}

We first estimate $\varepsilon_1$. Since 
$ |F_1(n,\xi)| = |\int_{\R} W_{\frac{\beta n}{2}, \xi} (t) dt|$,
%$ |F_1(n,\xi)| \le \int_{\R}M_f\left(\frac{\beta n}{2},t\right)dt$,  
the bound \eqref{eq:Wxy-integ-bound-proof} with $y=0$ gives
$$ |F_1(n,\xi)| \le \sqrt{\frac{\pi}{2}} \sigma \|c\|_\infty^2\left(1+\frac{2\sqrt{2\pi}\sigma}{\beta}\right)^2.$$ 
 By $\widetilde{\varphi_\xi}$ in \eqref{dual.phi_w} and the decay estimate \eqref{eq6}, we have 
\begin{align*}
\varepsilon_1 &\le \sqrt{\frac{\pi}{2}}  \sigma  \|c\|_\infty^2 \left(1+\frac{2\sqrt{2\pi}\sigma}{\beta}\right)^2\hskip-.15in \sum_{n\in([-N,N]\cap\Z)^c}\hskip-.15in\left|T_{\frac{\beta}{2}n}\widetilde{\varphi_\xi}(p+\xi)\right| \\
&\le \sqrt{\pi}\sigma \|c\|_\infty^2 \left(1+\frac{2\sqrt{2\pi}\sigma}{\beta}\right)^2e^{\frac{\xi^2}{4\sigma^2}}K\hskip-.15in\sum_{n\in([-N,N]\cap\Z)^c}\hskip-.15ine^{-\nu\left|p+\frac{\xi}{2}-\frac{\beta}{2}n\right|}.
\end{align*}
Since $ N=\left\lceil \frac{2}{\beta}\left(s+\frac{r}{2}\right)\right\rceil+m,  p\in[-s,s],
|\xi|\le r$, 
we obtain for $n\in([-N,N]\cap\Z)^c$,
\begin{align*}
    \left|p+\frac{\xi}{2}-\frac{\beta}{2}n\right| 
    \ge \left|\frac{\beta}{2}n\right|  - |p| - \left|\frac{\xi}{2}\right| 
    \ge \frac{\beta}{2}(m+|n|-N)
\end{align*}
and therefore
$$\sum_{n\in([-N,N]\cap\Z)^c} e^{-\nu\left|p+\frac{\xi}{2}-\frac{\beta}{2}n\right|}
\le 2\hskip-.05in\sum_{j=m+1}^\infty\hskip-.1in e^{-\frac{\nu\beta}{2}j} \le \frac{4}{\nu\beta}e^{-\frac{\nu\beta}{2}m}.
$$
Hence
\begin{align}\label{eq:eps1-proof}
\varepsilon_1 \le {}& \frac{4\sqrt{\pi}\sigma}{\nu\beta} K \|c\|_\infty^2 e^{\frac{\xi^2}{4\sigma^2}}
\left(1+\frac{2\sqrt{2\pi}\sigma}{\beta}\right)^2 e^{-\frac{\nu\beta}{2}m}.
\end{align}

Since  $Y_{n,k}$ is given by  \eqref{noisy.data.def}, we have 
$$ F_2(n,\xi) = I_h^H(W_{\frac{\beta n}{2},\xi}) + h\sum_{k=-H}^{H}\eta_{n,k}e^{-\frac{2\pi i\xi hk}{b}},  \ n\in[-N,N]\cap\Z. $$
Therefore,
$$ |F_1(n,\xi)-F_2(n,\xi)| \le \left| \int_{\R}W_{\frac{\beta n}{2},\xi}(t) dt - I_h^H(W_{\frac{\beta n}{2},\xi}) \right| + h\|\eta\|_\infty.$$
Substituting this into \eqref{eq:error-split-proof}, we obtain
\begin{align*}
\varepsilon_2 &\le \left( \left| \int_{\R}W_{\frac{\beta n}{2},\xi}(t) dt
- I_h^H(W_{\frac{\beta n}{2},\xi}) \right| + h\|\eta\|_\infty \right)
\sum_{n=-N}^{N} \left|T_{\frac{\beta}{2}n}\widetilde{\varphi_\xi}(p+\xi)\right| \\
&\le \left( \left| \int_{\R}W_{\frac{\beta n}{2},\xi}(t) dt - I_h^H(W_{\frac{\beta n}{2},\xi})
\right| +h\|\eta\|_\infty\right)\sqrt{2}K e^{\frac{\xi^2}{4\sigma^2}}
\sum_{n\in\Z}e^{-\nu\left|p+\frac{\xi}{2}-\frac{\beta}{2}n\right|}.
\end{align*}
Using $ \sum_{n\in\Z}e^{-\nu\left|p+\frac{\xi}{2}-\frac{\beta}{2}n\right|} \le 2\left(1+\frac{2}{\nu\beta}\right)$, 
together with \eqref{eq:quad-error-proof}, we obtain
\begin{align}\label{eq:eps2-proof}
\varepsilon_2 \hskip-.03in \le \hskip-.03in  2\sqrt{2}K e^{\frac{\xi^2}{4\sigma^2}} \hskip-.03in \left(\hskip-.03in 1+\frac{2}{\nu\beta} \hskip-.03in\right) \hskip-.06in
\left[ \hskip-.03in \sqrt{\frac{\pi}{2}} \sigma \|c\|_\infty^2 \hskip-.03in \Bigr( \hskip-.03in 1+\frac{2\sqrt{2\pi}\sigma}{\beta}  \Bigr)^2 
\Bigl(\hskip-.03in \frac{2e^{1+\frac{|\xi|}{\sigma}}}{e^{\frac{b}{\sigma h}}-1} + e^{-\frac{2\pi^2\sigma^2 h^2 q^2}{b^2}} \hskip-.03in \Bigr) \hskip-.06in + h\|\eta\|_\infty \hskip-.03in\right]\hskip-.03in.
\end{align}

Finally, combining \eqref{eq:error-split-proof}, \eqref{eq:eps1-proof}, and \eqref{eq:eps2-proof}, and using \(|\xi|\le r\), we arrive at
\begin{align*}
|f_\xi(p+\xi)-\mathcal G_p(\xi)|& \le Ke^{\frac{\xi^2}{4\sigma^2}} \left( \frac{4\sqrt{\pi}\sigma}{\nu\beta}\|c\|_\infty^2\Big(1+\frac{2\sqrt{2\pi}\sigma}{\beta}\Big)^2e^{-\frac{\nu\beta}{2}m} 
+2\sqrt{2}h\Big(1+\frac{2}{\nu\beta}\Big)\|\eta\|_\infty \right.
\nonumber\\
&\left.+2\sqrt{\pi}\sigma\|c\|_\infty^2
\Big(1+\frac{2\sqrt{2\pi}\sigma}{\beta}\Big)^2 \big(1+\frac{2}{\nu\beta}\big)
\Big(\frac{2e^{1+\frac{|\xi|}{\sigma}}}{e^{\frac{b}{\sigma h}}-1}
+e^{-\frac{2\pi^2\sigma^2 h^2 q^2}{b^2}}\Big)\right),
\end{align*}
which completes the proof.

\end{appendices}

\end{document}